\documentclass{article}
\def\N{\text{\rom I\kern-.23em\rom N}}
\mathchardef\qin'1062
\mathchardef\qprec'1036
\mathchardef\qless'474
\mathchardef\sim'1030

\makeatletter
\def\moverlay{\mathpalette\mov@rlay}
\def\mov@rlay#1#2{\leavevmode\vtop{%
   \baselineskip\z@skip \lineskiplimit-\maxdimen
   \ialign{\hfil$\m@th#1##$\hfil\cr#2\crcr}}}
\newcommand{\charfusion}[3][\mathord]{
    #1{\ifx#1\mathop\vphantom{#2}\fi
        \mathpalette\mov@rlay{#2\cr#3}
      }
    \ifx#1\mathop\expandafter\displaylimits\fi}
\makeatother

\newtheorem{Satz von Cantor}[Corollary]{Satz (Cantor)}
\newtheorem{Satz von Cantor--Schroeder--Bernstein}[Corollary]{Satz von
  Cantor--Schrder--Bernstein}
\newtheorem{Satz von Cantor--Bendixson}[Corollary]{Satz von Cantor--Bendixson}

\newcommand{\ZFC}{{\sf ZFC}}

\newcommand{\Col}{\operatorname{Col}}

\newcommand{\Hom}{\operatorname{Hom}}

\newcommand{\ran}{\operatorname{ran}}

\newcommand{\dom}{\operatorname{dom}}

\newcommand{\On}{\operatorname{ON}}

\newcommand{\goed}[1]{\ulcorner #1 \urcorner}

\newcommand{\lh}{\operatorname{lh}}

\renewcommand{\N}{\mathbb{N}}

\newcommand{\R}{\mathbb{R}}
\newcommand{\IP}{\mathbb{P}}

\newcommand{\al}{\alpha}

\newcommand{\la}{\lambda}
\newcommand{\si}{\sigma}
\newcommand{\Si}{\Sigma}

\newcommand{\res}{\!\restriction\!}
\newlength{\oben}
\newlength{\unten}

\newenvironment{envara}[1]
        {\vspace{-1ex}\begin{list}{}{\settowidth{\labelwidth}{#1}
         \labelsep1em\parskip0pt\setlength{\leftmargin}{1.5em}
         \addtolength{\leftmargin}{\labelwidth}
         \parsep0pt\itemsep0pt\partopsep0pt
         }}{\end{list}}

\def\goed#1{\setbox5=\hbox{$#1$}\dimen1=.25em \dimen2=\dimen1 \advance \dimen2 
by -1pt
\dimen4=0pt%
\dimen3=\ht5%
\dimen5=\wd5%
\ifdim\dimen3=\dimen4\dimen3=1.5ex\fi%
\hbox{\hskip2pt plus 2pt minus 1pt\raise.65\dimen3 \hbox{%
\vrule height.5\dimen3 depth0pt  width.4pt%
\vrule  height.5\dimen3 width\dimen1 depth-.48\dimen3}%
\kern-\dimen2%
\ifdim\dimen5=0pt\hskip1em\else\copy5\fi\kern-\dimen2
\raise.65\dimen3 \hbox{\vrule height .5\dimen3 width\dimen1 depth-.48\dimen3
\vrule height.5\dimen3 depth 0pt width.4pt}\hskip2pt plus2pt minus1pt}}

\newcommand{\AD}{{\sf AD}}
\newcommand{\ZF}{{\sf ZF}}
\newcommand{\AC}{{\sf AC}}
\newcommand{\FA}{{\sf FA}}
\newcommand{\BMM}{{\sf BMM}}

\newcommand{\DC}{{\sf DC}}

\newcommand{\HC}{\mathrm{HC}}

\newcommand{\BT}{{\sf BT}}
\newcommand{\MM}{{\sf MM}}

\newcommand{\MA}{{\sf MA}}
\newcommand{\NS}{{\sf NS}}

\newcommand{\Lg}{{\langle}}
\newcommand{\Rg}{{\rangle}}

\newcommand{\TC}[1]{\mathrm{TC}(#1)}

\usepackage
{graphicx, color}
\usepackage[utf8]{inputenc} 
\usepackage[T1]{fontenc}
\usepackage{url}
\usepackage{pdfpages} 
\usepackage[nottoc, notbib, notindex]{tocbibind}  
\usepackage{titletoc}
\usepackage{CJK}
\usepackage{graphicx}
\usepackage{tikz}
\usepackage{hyperref}
\usepackage{turnstile}
\definecolor{teal2}{rgb}{0.036, 0.512, 0.512}
\definecolor{veryblue}{rgb}{0.12, 0.33, 0.55}
\hypersetup{colorlinks=true}
\hypersetup{citecolor=veryblue,linkcolor=veryblue}
\evensidemargin0cm
\parskip1em
\parindent0em
\usepackage{amsmath, enumitem,amscd, upgreek, stmaryrd, mathtools, mathrsfs}
\usepackage{amssymb}
\usepackage{amsthm}
\usepackage{amsfonts}
\usepackage{titling}
\theoremstyle{plain}
\newtheorem{thm}{Theorem}[section]
\newtheorem{mainthm}{Main Theorem}
\newtheorem{lem}[thm]{Lemma}

\newtheorem{cor}[thm]{Corollary}

\theoremstyle{definition}
\newtheorem{defn}{Definition}[section]

\theoremstyle{remark}

\newtheorem*{notation}{\bf Notation}

\newtheorem{fact}{\bf Fact}

\title{Martin's Maximum\texorpdfstring{${}^{*, ++}_{\mathfrak{c}}$}{Lg} in \texorpdfstring{$\mathbb{P}_{\max}$}{Lg} extensions of strong models of determinacy}
\author{Ralf Schindler\thanks{Institut für Mathematische Logik und Grundlagenforschung, Universität Münster, Einsteinstr. 62, 48149 Münster, FRG. Funded by the Deutsche Forschungsgemeinschaft (DFG, German Research Foundation) under Germany's Excellence Strategy EXC 2044 –390685587, Mathematics Münster: Dynamics–Geometry–Structure. Email: rds@uni-muenster.de} \and
 Taichi Yasuda\thanks{Institut für Mathematische Logik und Grundlagenforschung, Universität Münster, Einsteinstr. 62, 48149 Münster, FRG. Funded by the Deutsche Forschungsgemeinschaft (DFG, German Research Foundation) under Germany's Excellence Strategy EXC 2044 –390685587, Mathematics Münster: Dynamics–Geometry–Structure. Email: tyasuda@uni-muenster.de}}
\date{\today}

\begin{document}
\maketitle
\begin{abstract}
We study a strengthening of $\MM^{++}$ which is called $\MM^{*, ++}$ and which was introduced in \cite{DR3} and \cite{Sch}. We force its bounded version $\MM^{*, ++}_{\mathfrak{c}}$, which is stronger than both $\MM^{++}(\mathfrak{c})$ as well as $\BMM^{++}$, by $\mathbb{P}_{\max}$ forcing over a determinacy model $L^{F_{\rm uB}}({\mathbb R}^*,\mbox{Hom}^*)$. The construction of the ground model $L^{F_{\rm uB}}({\mathbb R}^*,\mbox{Hom}^*)$ builds upon \cite{TG} and the derived model construction of \cite{uB}. 
\end{abstract}
\section{Introduction}
In retrospect, the program of obtaining Martin's Maximum${}^{++}$, abbreviated by $\MM^{++}$, or just consequences thereof by forcing over models of determinacy started with the work of Steel and Van Wesep \cite{SW}. 
They obtained the consistency of the saturation of $\NS_{\omega_{1}}$ plus $\underset{\sim}{\delta}^{1}_{2} = \omega_{2}$ by forcing over a model of $\AD_{\R}+``\Theta$ is regular.''
Later, Woodin introduced the partial order $\IP_{\max}$ \cite[Definition 4.33]{Wo} and forced a restricted version of $\MM^{++}$ called $\MM^{++}(\mathfrak{c})$ over models of $\AD_{\R}+``\Theta$ is regular'' \cite[Theorem 9.39]{Wo}, and he also forced Bounded Martin's Maximum${}^{++}$, abbreviated by $\BMM^{++}$, over models of $\AD+``V$ is closed under the $M_{1}^{\sharp}$ operator'' \cite[Theorem 10.99]{Wo}. Recently, Larson and Sargsyan \cite{LS} forced $\MM^{++}(\mathfrak{c})$ plus failures of square over Chang models by $\IP_{\max}$ forcing. Schindler \cite[Definition 2.10]{Sch} introduced $\MM^{*, ++}$ as a strengthening of $\MM^{++}$. $\MM^{*, ++}$ is defined by replacing the clause``$\varphi(\mathcal{M})$ may be forced to hold in stationary set preserving forcing extensions of $V$'' in the reformulations of $\MM^{++}$ as in \cite{BB} and \cite[Theorem 1.3]{BR2} with ``$\varphi(\mathcal{M})$ is honestly consistent.'' $\MM^{*, ++}$ is a natural statement in the context of $\IP_{\max}$ extensions, and the ultimate goal of the program seems to be to get models of $\MM^{*, ++}$ using $\IP_{\max}$ forcing. 

Our main result is part of this program and is on forcing $\MM^{*, ++}_{\mathfrak{c}}$. $\MM^{*, ++}_{\mathfrak{c}}$ is a global fragment of $\MM^{*, ++}$ stronger than both $\BMM^{++}$ and $\MM^{++}(\mathfrak{c})$. 

We let $\Gamma^{\infty}$ be the set of universally Baire sets of reals. We say a pointclass $\Gamma$ consisting of universally Baire sets of reals is \textit{productive} if it is closed under complements, projections, and satisfies for all $A\in\Gamma$, 
\[ \exists^{\R}A^{*}=(\exists^{\R}A)^{*}\]
holds in all generic extensions, where $A^{*}$ and $(\exists^{\R}A)^{*}$ denotes the canonical extension of $A$ and $\exists^{\R}A$ respectively (see Definition \ref{productivity}). 

Here is the first main result. 
\begin{mainthm}\label{mainthm}
Suppose $\Gamma\subset\mathcal{P}(\R)$ is a boldface pointclass and $F$ is a class predicate such that $L^{F}(\Gamma, \R)$ satisfies the following: 
\begin{enumerate}
    \item $L^{F}(\Gamma, \R)\cap \mathcal{P}(\R)=\Gamma$, 
    \item $\ZF+\AD^{+}+\AD_{\R}+``\Theta$ is regular'', 
    \item Every set of reals is universally Baire, 
    \item $\Gamma^{\infty}(=\mathcal{P}(\R))$ is productive. 
\end{enumerate}
    Suppose that $G\subset\IP_{\max}$ is $L^{F}(\Gamma, \R)$-generic. Suppose 
    \[H\subset\mathrm{Add}(\omega_{3}, 1)^{L^{F}(\Gamma, \R)}\]
    is $L^{F}(\Gamma, \R)[G]$-generic. Then 
    \[ L^{F}(\Gamma, \R)[G][H]\models \ZFC+\MM^{*, ++}_{\mathfrak{c}}. \]
\end{mainthm}

By combining the results in \cite[Lemma 3.4]{TG} and \cite[Main Theorem]{uB}, we construct a ground model for Main Theorem \ref{mainthm}. Here is the second main result. 
\begin{mainthm}\label{maindm}
        Suppose that $V$ is self-iterable\footnote{Broadly speaking, $V$ is self-iterable if it knows its unique iteration strategy in any set generic extensions. See \cite[Section 2]{TG}. }. Let $\lambda$ be an inaccessible cardinal which is a limit of Woodin cardinals and a limit of strong cardinals, and let $G\subset\Col(\omega, <\lambda)$ be a $V$-generic filter. 
        Let 
        \[\mathcal{M}=(L^{F_{\mathrm{uB}}}(\R^{*}, \Hom^{*}))^{V(\R^{*})}, \]
        where $F_{\mathrm{uB}}$ is as defined in \cite[Definition 4.4]{uB}\footnote{We shall give the definition of $F_{\mathrm{uB}}$ later. See Definition \ref{predF}. }. 
        Then
        \begin{enumerate}
            \item $\mathcal{M}\models\AD^{+}+\AD_{\R}+ ``\Theta \text{ is regular''} + \text{ ``Every set of reals is universally Baire''}$, 
            \item $\mathcal{M}\cap \mathcal{P}(\R^{*}_{G})=\Hom^{*}_{G}$, 
            \item $\mathcal{M}\models``\Gamma^{\infty}(=\Hom^{*}_{G})\text{ is productive''}$.  
        \end{enumerate}
    \end{mainthm}
From these two results, the theory     
\begin{align*}
    &\ZF+\AD^{+}+\AD_{\R}+``\Theta \text{ is regular''}+\\
    &``\text{Every set of reals is universally Baire''}+``\Gamma^{\infty} \text{ is productive''}. 
\end{align*}
seems a reasonable theory beyond $V=L(\mathcal{P}(\R))$. However, it is still open whether the theory 
\begin{align*}
    &\ZF+\AD^{+}+\AD_{\R}+``\Theta \text{ is regular''}+``\text{Every set of reals is universally Baire''}
\end{align*}
implies the productivity of $\Gamma^{\infty}$ or not. The similar question was asked by Feng--Magidor--Woodin \cite[6. OPEN QUESTIONS 3]{FMW}. 

Chapter $2$ lists preliminaries. In Chapter $3$, we prove the existence of capturing mice, which is crucial for the proof of the first main theorem. Chapter $4$ is the proof of the first main theorem, and Chapter 5 is the proof of the second main theorem.

{\large{\bf Notation.}} 
\begin{itemize}
    \item Let $\R=\omega^{\omega}$ as usual. 
    \item For $\alpha<\omega_{1}$, let $\mathrm{WO}_{\alpha}$ be the set of reals coding $\alpha$. Let $\mathrm{WO}=\bigcup_{\al<\omega_{1}}\mathrm{WO}_{\al}$. For more details, see \cite[4A]{Mos}. 
    \item Let $\On$ be the class of all the ordinals. 
    \item Let $\mathcal{L}_{\dot{\in}}$ be the language of set theory, and let $\mathcal{L}_{\dot{\in}, \dot{I}_{\NS}}$ be the language of set theory augmented by a predicate $\dot{I}_{\NS}$ for $\NS_{\omega_{1}}$. In transitive models $\mathcal{M}$ of $\ZFC^{-}+``\omega_{1}$ exists'',  
$\dot{\in}$ is always to be interpreted by $\in\res\mathcal{M}$, $\dot{I}_{\NS}$ is always to be interpreted by $\NS_{\omega_{1}}$ in the sense of $\mathcal{M}$. 
\item For a forcing $\IP$ and a $\IP$-name $\tau$, we denote $\tau^{G}$ by the interpretation of $\tau$ by $\IP$-generic $G$. 
\end{itemize}
\section*{Acknowledgement}
The second author was funded by Study Scholarships -Master Studies for All Academic Disciplines given by \textit{Deutscher Akademischer Austauschdienst} (DAAD, German Academic Exchange Service) during his master's study. 
The second author would like to thank DAAD for the generous support during his Master's study. 

\section{Preliminaries}
This chapter lists the tools and theorems used in this paper briefly. 
\subsection{Forcing axioms }
In this section, we introduce bounded forcing axioms and their characterization. 
\begin{defn}
    Let $\Gamma$ be a class of forcings, i.e., complete Boolean algebras, and let $\kappa$ be an uncountable cardinal. 
    
    $\FA_{\kappa}(\Gamma)$, or $\FA_{\kappa}$ for forcings in $\Gamma$, denotes the statement that whenever $\IP\in\Gamma$ and $\{A_{i} \mid i<\omega_{1}\}$ is a family of maximal antichains in $\IP$ such that $A_{i}$ has size at most $\kappa$ for each $i<\omega_{1}$, then there is a filter $G$ in $\IP$ such that $G\cap A_{i}\neq\emptyset$ for all $i<\omega_{1}$. 

    $\FA_{\kappa}^{++}(\Gamma)$, or $\FA_{\kappa}^{++}$ for forcings in $\Gamma$, denotes the statement that whenever $\IP\in\Gamma$, $\{A_{i} \mid i<\omega_{1}\}$ is a family of maximal antichains in $\IP$, and $\{\tau_{i} \mid i<\omega_{1}\}$ is a family of terms for stationary subsets of $\omega_{1}$ such that $A_{i}$ has size at most $\kappa$ for each $i<\omega_{1}$, then there is a filter $G$ in $\IP$ such that $G\cap A_{i}\neq\emptyset$ for all $i<\omega_{1}$ and 
    \[\tau_{i}^{G}=\{\alpha<\omega_{1} \mid \exists p\in G (p\dststile{\IP}{}\check{\alpha}\in\dot{\tau}_{i}) \} \]
    is stationary for all $i<\omega_{1}$.
\end{defn}
We have $\BMM$ is $\FA_{\aleph_{1}}$ for stationary set preserving forcings, $\MM_{\mathfrak{c}}$ is $\FA_{\mathfrak{c}}$ for stationary set preserving forcings, and $\MM$ is $\FA_{\kappa}$ for stationary set preserving forcings and for  all $\kappa$. 
The same goes for the $++$ version. 
\begin{defn}
    Let $\mathcal{M}=(M, \in, \vec{R})$ be a transitive structure such that $\vec{R}=(R_{i} \colon i<\omega_{1})$ is a list of $\aleph_{1}$ relations on $M$, and let $\varphi$ be a $\Si_{1}$ formula. Let $\Psi(\mathcal{M}, \varphi)$ be the statement that there is some transitive structure $\bar{\mathcal{M}}$ of size $\aleph_{1}$, some elementary $\pi\colon \bar{\mathcal{M}}=(\bar{M}, \in, (\bar{R}_{i} \colon i<\omega_{1}))\to \mathcal{M}$, and $\varphi(\bar{\mathcal{M}})$ holds true. 
\end{defn}
Honest consistency is motivated by the following characterization of $\FA^{++}_{\kappa}(\Gamma)$. 
\begin{lem}[{\cite[Theorem 5]{BB}}, {\cite[Theorem 1.3]{BR2}}]\label{char}
    Let $\Gamma$ be a class of forcings. The following are equivalent. 
    \begin{enumerate}
        \item $\FA^{++}_{\kappa}(\Gamma)$. 
        \item For all $\IP\in\Gamma$, for all transitive structures $\mathcal{M}$ of size at most $\kappa$, and for all $\Si_{1}$ formulae $\varphi$ in $\mathcal{L}_{\dot{\in}, \dot{I}_{\NS}}$, 
        \[ V^{\IP}\models\varphi(\mathcal{M})\Longrightarrow V\models\Psi(\mathcal{M}, \varphi). \]
    \end{enumerate}
\end{lem}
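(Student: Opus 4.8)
I would prove the two implications separately. The direction $(2)\Rightarrow(1)$ is ``easy but tricky'': one encodes the combinatorics of $\FA^{++}_\kappa(\IP)$ into a single structure of size $\le\kappa$ and extracts a filter from the honest consistency of a suitable $\Si_1$ statement; the direction $(1)\Rightarrow(2)$ is where the hypothesis is actually used, and is where I expect the real work to be.

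\emph{Proof of $(2)\Rightarrow(1)$.} Fix a complete Boolean algebra $\IP\in\Gamma$, maximal antichains $\langle A_i\mid i<\omega_1\rangle$ with $|A_i|\le\kappa$, and $\IP$-names $\langle\tau_i\mid i<\omega_1\rangle$ for stationary subsets of $\omega_1$. For $i,\alpha<\omega_1$ put $f_i(\alpha):=\llbracket\check\alpha\in\dot\tau_i\rrbracket\in\IP$, so $\{f_i(\alpha),-f_i(\alpha)\}$ is a maximal antichain of size $\le2$, and let $D$ be the Boolean subalgebra of $\IP$ generated by $\bigcup_iA_i\cup\bigcup_i\operatorname{ran}(f_i)$; then $|D|\le\kappa$ and each of $A_i$, $\{f_i(\alpha),-f_i(\alpha)\}$ stays maximal in $D$. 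I would build a transitive $\mathcal{M}$ of size $\le\kappa$ (flattening the elements of $D$ to ordinal codes, so $\mathcal{M}$ is literally transitive) whose $\aleph_1$ relations code: the ordering of $D$; the antichains $A_i$ and $\{f_i(\alpha),-f_i(\alpha)\}$; the functions $f_i$; and $\omega_1$ together with, for each $\beta<\omega_1$, a surjection $e_\beta\colon\omega\to\beta$. Let $\varphi$ be the $\mathcal{L}_{\dot{\in},\dot{I}_{\NS}}$-formula with parameter $\mathcal{M}$ asserting: there is $g\subseteq M$ which is a filter for the coded ordering, meets every coded antichain, and has $\{\alpha<\omega_1^{\mathcal{M}}\mid f_i(\alpha)\in g\}\notin\dot{I}_{\NS}$ for all $i<\omega_1^{\mathcal{M}}$ --- this is genuinely $\Si_1$ exactly because ``stationary'' is rendered by the primitive predicate $\dot{I}_{\NS}$ rather than by quantifying over clubs. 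In $V^{\IP}$ the filter $g:=G\cap D$ witnesses $\varphi(\mathcal{M})$: it meets each $A_i$ by maximality, decides each $f_i(\alpha)$, and $\{\alpha\mid f_i(\alpha)\in g\}=\tau_i^{G}$ is stationary in $V^{\IP}$. By $(2)$, $V\models\Psi(\mathcal{M},\varphi)$, giving a transitive $\bar{\mathcal{M}}$ of size $\aleph_1$, an elementary $\pi\colon\bar{\mathcal{M}}\to\mathcal{M}$ and $\varphi(\bar{\mathcal{M}})$ true in $V$. The presence of $\omega_1$ and the $e_\beta$'s forces $\pi\restriction\omega_1=\operatorname{id}$ and $\omega_1^{\bar{\mathcal{M}}}=\omega_1$, so the witnessing $\bar g$ pushes forward along $\pi$ to a filter $H$ on $\IP$ with $H\cap A_i\ne\emptyset$ for all $i$; since $\pi(\bar f_i(\alpha))=\llbracket\check\alpha\in\dot\tau_i\rrbracket$, we get $\{\alpha\mid\bar f_i(\alpha)\in\bar g\}\subseteq\tau_i^{H}$, and as the left-hand set is stationary in $V$ (this being precisely what $\varphi(\bar{\mathcal{M}})$ asserts, now read off against the true nonstationary ideal), $\tau_i^{H}$ is stationary in $V$. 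This is $\FA^{++}_\kappa(\IP)$.

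\emph{Proof of $(1)\Rightarrow(2)$.} Assume $\FA^{++}_\kappa(\Gamma)$, fix $\IP\in\Gamma$, a transitive $\mathcal{M}$ of size $\le\kappa$, a $\Si_1$ formula $\varphi=\exists y\,\psi$ of $\mathcal{L}_{\dot{\in},\dot{I}_{\NS}}$ with $\psi$ bounded, and suppose $V^{\IP}\models\varphi(\mathcal{M})$. Inside $V^{\IP}$ I would pick a witness $y$ and an elementary $X\prec(H_\theta,\in,\NS_{\omega_1})$ with $\{\mathcal{M},y\}\cup\omega_1\subseteq X$, $|X|=\aleph_1$, arranged so that $X$ contains, for each $S\in X$ nonstationary in $V^{\IP}$, a club disjoint from $S$; the inverse transitive collapse $k\colon\bar N\to X$ then yields $\bar{\mathcal{M}}:=k^{-1}(\mathcal{M})$ transitive of size $\aleph_1$, $\pi:=k\restriction\bar M$ elementary into $\mathcal{M}$, and $\bar y:=k^{-1}(y)$ of size $\aleph_1$ with $\psi(\bar{\mathcal{M}},\bar y)$ true in $V^{\IP}$ (using that $\bar N$, being $\NS$-correct in $V^{\IP}$, evaluates the bounded formula $\psi$ correctly). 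When $\kappa=\aleph_1$ one may take $\bar{\mathcal{M}}=\mathcal{M}$, $\pi=\operatorname{id}$, and shrink only $\bar y$. Now $\bar{\mathcal{M}}$ and $\bar y$ are coded by subsets of $\omega_1$ in $V^{\IP}$, and I would apply $\FA^{++}_\kappa(\IP)$ to a family of $\le\aleph_1$ maximal antichains of size $\le\kappa$ --- the two-element Boolean-value antichains decoding names for $\omega_1$-codes of $\bar{\mathcal{M}}$ and $\bar y$, supplemented by $\le\kappa$-sized antichains reading off $\pi$ --- together with the $\le\aleph_1$ names for the stationary subsets of $\omega_1$ (and for clubs witnessing the nonstationary ones) occurring in $\psi$. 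The resulting filter decodes, in $V$, a transitive $\bar{\mathcal{M}}^{\ast}$ of size $\aleph_1$, an elementary $\pi^{\ast}\colon\bar{\mathcal{M}}^{\ast}\to\mathcal{M}$, and a $\bar y^{\ast}$ with $\psi(\bar{\mathcal{M}}^{\ast},\bar y^{\ast})$ true in $V$ --- the $\dot{\in}$-part of $\psi$ secured by the usual bounded-forcing-axiom argument and the $\dot{I}_{\NS}$-part by the ``$++$'' clause --- so $V\models\Psi(\mathcal{M},\varphi)$.

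\emph{Main obstacle.} The heart of the matter is that $\dot{I}_{\NS}$ is \emph{not} forcing-absolute --- stationarity of a subset of $\omega_1$ can be created as well as destroyed --- so a naive $\Si_1$-absoluteness argument is impossible, and the content of the lemma is that the ``$++$'' (stationary-set-preservation) clause of $\FA^{++}_\kappa$ is exactly calibrated to repair this. The delicate points I expect are: in $(1)\Rightarrow(2)$, arranging $X$ and its collapse to compute $\NS_{\omega_1}$ correctly inside $V^{\IP}$, and verifying that feeding the right names for stationary sets --- and for clubs witnessing nonstationarity --- into $\FA^{++}_\kappa(\IP)$ really forces $\psi(\bar{\mathcal{M}}^{\ast},\bar y^{\ast})$ in $V$; and in $(2)\Rightarrow(1)$, keeping $\mathcal{M}$ of size $\le\kappa$ while encoding enough of $\IP$ to recognize filters --- for which the two-element antichains $\{\llbracket\check\alpha\in\dot\tau_i\rrbracket,\llbracket\check\alpha\notin\dot\tau_i\rrbracket\}$, the surjections $e_\beta$ pinning $\omega_1$ down under elementary maps, and the passage to the size-$\le\kappa$ subalgebra $D$ are the essential devices.
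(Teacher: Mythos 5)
Your proof of the direction the paper actually proves, (2)$\Rightarrow$(1), is correct and is essentially the paper's argument: code the antichains and the Boolean values $||\check{\alpha}\in\dot{\tau}_{i}||$ into a transitive structure of size at most $\kappa$, let $\varphi$ assert the existence of a filter meeting the antichains together with $\dot{I}_{\NS}$-positive sets $S_{i}$ read off from those values, and push the witness for $\varphi(\bar{\mathcal{M}})$ forward to the desired filter; your subalgebra-plus-$e_{\beta}$ coding plays the role of the paper's collapsed hull of $H_{\theta}$, which supplies the surjections (and hence the fact that the relevant countable ordinals are fixed by $\pi$) automatically. The paper does not prove (1)$\Rightarrow$(2) but cites [BB] and [BR2] for it, so your sketch of that direction goes beyond the paper's text; it names the right ingredients but leaves the genuinely delicate steps (decoding a well-founded transitive structure and an elementary map from a filter using only antichains of size at most $\kappa$, and certifying the positive $\dot{I}_{\NS}$-statements via decoded clubs) at the level of assertion, which is precisely the material the paper delegates to those references.
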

\begin{proof}
We only prove that (2) implies (1). For more details, see \cite[Theorem 5]{BB} and \cite[Theorem 1.3]{BR2}. 

Let $\IP$ be stationary set preserving complete Boolean algebra, let $(A_{i} \colon i<\omega_{1})$ be a family of maximal antichains in $\IP$ such that each $A_{i}$ has size at most $\kappa$, and let $(\tau_{i} \colon i<\omega_{1})$ be a faimily of names for stationary subsets of $\omega_{1}$. 
Let 
\[ B_{i}=\{ (\alpha, ||\check{\alpha}\in\dot{\tau}_{i}||) \mid \alpha<\omega_{1}\}\]
for $i<\omega_{1}$. 
Let $\theta$ be sufficiently large, and let 
\[ \si\colon\mathcal{M}=(M, \in, \bar{\IP}, (\bar{A}_{i} \colon i<\omega_{1}), (\bar{B}_{i} \colon i<\omega_{1}))\to(H_{\theta}, \in, \IP, (A_{i} \colon i<\omega_{1}), (B_{i} \colon i<\omega_{1})), \]
where $M$ is transitive and of size $\kappa$, and 
\[ (\omega_{1}+1)\cup\{\IP\}\cup \{\tau_{i}\mid i<\omega_{1}\}\cup \bigcup_{i<\omega_{1}}A_{i}\cup\bigcup_{i<\omega_{1}}B_{i}\subset\ran(\si). \]
Let $\varphi(\mathcal{M})$ say 
\begin{align*}
    \varphi(\mathcal{M})\equiv &\exists \tilde{G}\, \exists (S_{i} \mid i<\omega_{1}) \\
    &[\bar{G} \text{ is a filter in } \bar{\IP}\\
    &\land\forall i<\omega_{1} (\bar{A}_{i}\cap \bar{G}\neq\emptyset)\\
    &\land \forall i<\omega_{1} (S_{i}\subset\omega_{1} \land S_{i}\notin \dot{I}_{\NS}) \\
    &\land \forall i<\omega_{1}\forall \alpha\in S_{i}\exists p\in\bar{G} \\
    &((\al, p)\in\bar{B}_{i})]. 
\end{align*}
Let $G\subset\IP$ be $V$-generic. Then 
\[ V[G]\models \varphi(\mathcal{M})\]
as witnessed by the $\si$-preimage of $G$ and $(\tau_{i}^{G} \colon i<\omega_{1})$.  

By our hypothesis, let $\pi\colon\bar{\mathcal{M}}\to \mathcal{M}$ be elementary and such that $\varphi(\bar{\mathcal{M}})$ holds. 
Let $\bar{G}$ and $(S_{i}\colon i<\omega_{1})$ witness that $\varphi(\bar{\mathcal{M}})$ holds true. 
Let $G$ be the filter in $\IP$ generated by $(\si\circ\pi)[\bar{G}]$. 
Then we have that 
\[ G\cap A_{i}\neq\emptyset, \]
since $\bar{G}\cap\bar{A}_{i}\neq\emptyset$. 
Moreover, since $S_{i}\subset \tau_{i}^{\bar{G}}$ for each $i<\omega$, $\tau_{i}^{\bar{G}}$ is stationary for each $i<\omega_{1}$. 
Hence $G$ is the desired filter. Therefore, $\FA^{++}_{\kappa}(\Gamma)$ holds. 
\end{proof}

\subsection{Universally Baire property}
The notion of universally Baireness is introduced by Feng, Magidor, and Woodin \cite{FMW}. The following definition of universally Baireness is still valid in choiceless models. 
\begin{defn}
    Let $T$ and $U$ be trees on $\omega^{k}\times\On$, and let $Z$ be a set. We say the pair $(T, U)$ is \textit{$Z$-absolutely complementing} if we have
    \[ V^{\Col(\omega, Z)}\models p[T]=\R^{k}\setminus p[U]. \]

    We say a set of reals $A$ is \textit{$Z$-universally Baire} if there is a $Z$-absolutely complementing pair $(T, U)$ of trees such that $A=p[T]=\R\setminus p[U]$ in $V$. 
    We call such a pair $(T, U)$ \textit{$Z$-absolutely complementing pair of trees for $A$}. We say a set of reals $A$ is \textit{universally Baire} if $A$ is $Z$-universally Baire for any set $Z$. 
\end{defn}
In the $\AC$ context, being universally Baire is equivalent to being $<\On$-universally Baire, i.e., $\kappa$-universally Baire for all $\kappa\in\On$. However, in the absence of $\AC$, the authors do not know that they are still equivalent. 
\begin{notation}
Let $Z$ be a set. 
    Let $A$ be a $Z$-universally Baire, and let $(T, U)$ be a $Z$-absolutely complementing pair of trees for $A$. 
    Let $G$ be $\Col(\omega, Z)$-generic. 
    Then we denote the canonical expansion of $A$ to $V[G]$ by
    \[A^{G}=p[T]^{V[G]},  \]
    or if $G$ is clear from the context, 
    we denote it by $A^{*}$.

    Note that the canonical expansion does not depend on the choice of an absolutely complementing pair of trees. 
\end{notation}
We shall use projective generic absoluteness with names for sets of reals in the proof of Theorem \ref{mainthm}. For that, we need tree representations compatible with projections, i.e., $(\exists^{\R} A)^{*}=\exists^{\R}A^{*}$ for a universally Baire set $A$. 
\begin{defn}\label{productivity}
    Let $\Gamma\subset\bigcup_{1\leq k<\omega}\mathcal{P}(\R^{k})$ be a pointclass of universally Baire sets of reals. We say that $\Gamma$ is \textit{productive} if 
    \begin{enumerate}
        \item $\Gamma$ is closed under taking complements and projections, and 
        \item for all $k<\omega$ and for all $D\in \Gamma\cap \mathcal{P}(\R^{k+2})$, if the trees $T$ and $U$ on $\omega^{k+2}\times\On$ witness that $D$ is $Z$-universally Baire and if 
        \[ \tilde{T}=\{(s\res (k+1), (s(k+1), t)) \mid (s, t)\in T \}, \]
        then there is a tree $\tilde{U}$ on $\omega^{k+1}\times\On$ such that
        \[ V^{\Col(\omega, Z)}\models p[\tilde{T}]=\R^{k+1}\setminus p[\tilde{U}]. \]
    \end{enumerate}
\end{defn}
Lemma \ref{abs} is shown by an induction on the complexity of formulae. 
\begin{lem}\label{abs}
    Let $\Gamma$ be productive, and let $A\in \Gamma$. Then any projective statement about $A$ is absolute between $V$ and any forcing extension of $V$. 
\end{lem}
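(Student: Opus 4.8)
The plan is to prove Lemma \ref{abs} by induction on the complexity of projective formulae, using the productivity of $\Gamma$ at the projection steps to maintain a uniform tree representation across forcing extensions. First I would set up the invariant carried through the induction: for every subformula $\psi(v_0,\ldots,v_{k-1})$ (with free real variables shown) obtained from $A$ by the projective operations, and for every relevant $Z$, one keeps a $Z$-absolutely complementing pair of trees $(T_\psi,U_\psi)$ on $\omega^k\times\On$ such that the set defined by $\psi$ over $V$ equals $p[T_\psi]=\R^k\setminus p[U_\psi]$, and moreover $\psi$ continues to be defined by $p[T_\psi]$ in $V^{\Col(\omega,Z)}$, hence in any set-generic extension (any forcing $\IP$ of size $\le|Z|$ embeds into $\Col(\omega,Z)$, so a statement true in $V^{\Col(\omega,Z)}$ about these trees passes down; and one is free to enlarge $Z$). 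The base case is $\psi\equiv A$, where such a pair exists because $A\in\Gamma$ is universally Baire.

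Next I would handle the inductive steps. Boolean combinations are routine: for negation swap $T_\psi$ and $U_\psi$; for conjunction/disjunction, take the standard product/sum trees, which remain absolutely complementing in $V^{\Col(\omega,Z)}$ because $p[\cdot]$ commutes with finite intersections and unions and the defining statement ``$p[T_{\psi\wedge\chi}]$ is the intersection'' is itself absolute. Adding dummy variables is likewise trivial. The substantive step is the quantifier $\psi(\vec v)\equiv\exists^{\R}\chi(\vec v,v_k)$: here one uses that $\Gamma$ is closed under projections to conclude that the set defined by $\chi$ lies in $\Gamma$ and is universally Baire with some complementing pair $(T_\chi,U_\chi)$, forms the projected tree $\tilde T_\chi$ exactly as in clause (2) of Definition \ref{productivity}, and invokes productivity to obtain $\tilde U_\chi$ with $V^{\Col(\omega,Z)}\models p[\tilde T_\chi]=\R^{k}\setminus p[\tilde U_\chi]$. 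Since $p[\tilde T_\chi]=\exists^{\R}p[T_\chi]$ level by level (projection of a tree is computed the same way in any model), the pair $(\tilde T_\chi,\tilde U_\chi)$ witnesses that $\psi$ is defined by $p[\tilde T_\chi]$ both in $V$ and in $V^{\Col(\omega,Z)}$, and by the embedding argument in every set-generic extension. Universal quantifiers reduce to $\neg\exists\neg$. The induction then yields that for the full projective statement $\varphi$ (a sentence), the truth value is computed from trees whose projections are preserved, so $V\models\varphi(A)$ iff $V^{\IP}\models\varphi(A^*)$ for every set forcing $\IP$; iterating/composing handles arbitrary forcing extensions.

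The main obstacle, and the only place real care is needed, is the bookkeeping of parameters and of the set $Z$: the productivity clause is stated for a fixed $Z$, but across an induction with finitely many projection steps one must fix a single $Z$ large enough to witness universal Baireness simultaneously for all the finitely many auxiliary sets that arise, and then argue that enlarging $Z$ does not disturb any of the previously fixed tree representations (this is where one uses that the canonical expansion is independent of the choice of complementing pair, as noted after the Notation). One also has to be slightly careful that clause (2) of Definition \ref{productivity} is literally phrased for projecting off the \emph{last} real coordinate of a $(k+2)$-ary relation, so in the induction one should always arrange the quantified variable to be last (permuting coordinates of trees is harmless and absolute). Once these routine but fiddly points are organized, the rest is a direct complexity induction with no further input.
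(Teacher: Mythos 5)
Your induction is exactly the argument the paper has in mind: the paper offers no details beyond the remark that Lemma \ref{abs} ``is shown by an induction on the complexity of formulae,'' and your proof fleshes that out in the intended way (universal Baireness of $A$ at the base, closure of $\Gamma$ under complements and projections to keep subformula sets in $\Gamma$, and productivity clause (2) precisely at the $\exists^{\R}$ step, with the usual wellfoundedness-absoluteness and collapse-absorption bookkeeping to pass to arbitrary extensions). So the proposal is correct and essentially coincides with the paper's approach.
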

\subsection{\texorpdfstring{$\MM^{*, ++}$}{Lg}}
Let us introduce $\MM^{*, ++}$. The notations are mainly the same as \cite{Sch}. We say $x\in\R$ \textit{codes a transitive set} if 
\[ E_{x}=\{(n, m) \mid x(\Lg n, m\Rg)=0\}\]
is an extensional and well-founded relation on $\omega$. 
Let us write $\mathrm{WF}$ for the set of reals coding a transitive set. Note that $\mathrm{WF}$ is the $\Pi^{1}_{1}$-complete set of reals\footnote{See \cite[4A]{Mos}.}. 
If $x\in\mathrm{WF}$, then let $\pi_{x}$ be the transitive collapse of the structure $(\omega, E_{x})$ and let $\mathrm{decode}(x)=\pi_{x}(0)$. If $x, y\in\mathrm{WF}$, then we shall write $x\simeq y$ to express that $\mathrm{decode}(x)=\mathrm{decode}(y)$. 

We say a function $f\colon\R\to\R$ is \textit{universally Baire} if the graph of $f$ is a universally Baire subset of $\R^{2}$. 
\begin{defn}\label{def-stub}
  We say a function $F\colon \HC\to\HC$ is \textit{strongly universally Baire in the codes}, or briefly \textit{strongly universally Baire}, if there is a universally Baire function $f\colon\R\to\R$ such that 
  \begin{enumerate}
      \item if $z\in\HC$ and $x\in\mathrm{WF}$ with $z=\mathrm{decode}(x)$, then $f(x)\in\mathrm{WF}$ and $F(z)=\mathrm{decode}(f(x))$; 
      
      let $(T, U)$ witness that $f$ is universally Baire with $f=p[T]$. Then for all posets $\IP$, 
      \item $V^{\IP}\models ``p[T]$ is a function from $\R$ to $\R$''; 
      \item $V^{\IP}\models \forall \{x, x', y, y'\}\subset\R [(x, y), (x', y')\in p[T] \land x\simeq x'\longrightarrow y\simeq y']$. 
  \end{enumerate}
\end{defn}
Note that both (2) and (3) of Definition \ref{def-stub} are projective statements about the graph of $f$. Hence by Lemma \ref{abs}, we have the following. 
\begin{lem}\label{lem-stub}
Assume that $\Gamma^{\infty}$ is productive. Let $F\colon \HC\to\HC$ be a function. Suppose that there is a  universally Baire function $f\colon\R\to\R$ satisfying 
\begin{enumerate}
    \item if $z\in\HC$ and $x\in\mathrm{WF}$ with $z=\mathrm{decode}(x)$, then $f(x)\in\mathrm{WF}$ and $F(z)=\mathrm{decode}(f(x))$; and
    \item $\forall \{x, x', y, y'\}\subset\R [y=f(x) \land y'=f(x') \land x\simeq x'\longrightarrow y\simeq y']$. 
\end{enumerate}
Then $F$ is strongly universally Baire. 
\end{lem}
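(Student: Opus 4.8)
The plan is to check the three clauses of Definition~\ref{def-stub} for $F$ using the \emph{same} universally Baire function $f$ that the hypothesis hands us. Clause~(1) of Definition~\ref{def-stub} is verbatim hypothesis~(1), so there is nothing to prove there. Fix trees $(T,U)$ witnessing that $A:=\mathrm{graph}(f)\subseteq\R^{2}$ is universally Baire, with $A=p[T]=\R^{2}\setminus p[U]$ in $V$; since $f$ is universally Baire, $A\in\Gamma^{\infty}$, and by assumption $\Gamma^{\infty}$ is productive, so Lemma~\ref{abs} applies to $A$. Note also that the canonical expansion $A^{*}=p[T]^{V[G]}$ appearing in clauses~(2) and~(3) does not depend on the choice of $(T,U)$, as recorded in the Notation box. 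It remains to see that clauses~(2) and~(3) hold in $V^{\IP}$ for every poset $\IP$.

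For clause~(2): in any $V[G]$, the assertion ``$p[T]^{V[G]}$ is a function from $\R$ to $\R$'' unfolds as $(\forall x\in\R)(\exists y\in\R)\,(x,y)\in A^{*}$ conjoined with $(\forall x,y,y'\in\R)\,[(x,y)\in A^{*}\wedge(x,y')\in A^{*}\to y=y']$. This is a projective statement about the universally Baire set $A$ in the sense of Lemma~\ref{abs}; the only delicate point is the outer existential real quantifier in the totality clause, which is handled precisely by productivity of $\Gamma^{\infty}$ (this is what lets $\exists^{\R}$ commute with the canonical expansion). Since $f$ is by hypothesis a total function $\R\to\R$, the statement holds in $V$, so by Lemma~\ref{abs} it holds in $V^{\IP}$; this is clause~(2).

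For clause~(3): observe first that the relation ``$x\simeq x'$'', i.e. ``$x,x'\in\mathrm{WF}$ and $\mathrm{decode}(x)=\mathrm{decode}(x')$'', is projective, uniformly in $x,x'$: $\mathrm{WF}$ is $\Pi^{1}_{1}$, and for $x,x'\in\mathrm{WF}$ the equality $\mathrm{decode}(x)=\mathrm{decode}(x')$ is equivalent to the existence of an isomorphism between the well-founded extensional relations $E_{x}$ and $E_{x'}$ on $\omega$ (preserving the distinguished point $0$), which is $\Sigma^{1}_{1}$. Hence $\forall\{x,x',y,y'\}\subseteq\R\,[(x,y)\in A^{*}\wedge(x',y')\in A^{*}\wedge x\simeq x'\to y\simeq y']$ is again a projective statement about $A$. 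Rewriting $y=f(x)$ as $(x,y)\in A$, this statement is exactly hypothesis~(2) of the lemma, so it is true in $V$; by Lemma~\ref{abs} it holds in $V^{\IP}$, which is clause~(3). Together with clause~(1) this shows that $F$ is strongly universally Baire.

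I do not expect a genuine obstacle: all the substantive work is already contained in Lemma~\ref{abs}, and the present lemma is its immediate corollary once one checks that the matrices of clauses~(2) and~(3) --- including the coding relation $\simeq$ --- are projective and that the two conditions are inherited from $V$. The only place where one must be slightly careful is the aforementioned outer real quantifier in ``$p[T]$ is a function'', where productivity rather than mere universal Baireness is what is being used.
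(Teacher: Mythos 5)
Your argument is correct and is essentially the paper's own proof: the paper simply observes that clauses (2) and (3) of Definition \ref{def-stub} are projective statements about the graph of $f$ and invokes Lemma \ref{abs}, which is exactly what you spell out (including the role of productivity in handling the real quantifiers). One cosmetic point: $x\simeq x'$ is witnessed by an isomorphism of the hereditary parts of $(\omega,E_x)$ and $(\omega,E_{x'})$ below the distinguished point $0$, not necessarily of the full structures, but either way the relation is projective, which is all you need.
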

Let $F\colon \HC\to\HC$ be strongly universally Baire in the codes as witnessed by $f$ (and a pair $(T, U)$ of trees). Let $\IP$ be a poset, and let $g\subset\IP$ be $V$-generic. Then $F$ canonically extends a total map 
\[ F^{\IP, g}\colon V[g]\to V[g]\]
as follows. 
Let $X\in V[g]$. Let $\theta$ be any sufficiently large cardinal, let $H\subset\Col(\omega, \theta)$ be $V[g]$-generic, and let $x\in \R\cap V[g][H]$ be such that $X=\mathrm{decode}(x)$. Let $y\in\R\cap V[g][H]$ be such that $(x, y)\in p[T]$. We set $F^{\IP, g}(X)=\mathrm{decode}(y)$. Then $F^{\IP, g}$ is well-defined and does not depend on the choice of $f$, $(T, U )$, $\theta$, and $H$ by the standard argument. 

\begin{defn}
    Let $F\colon \HC\to\HC$ be strongly universally Baire. Let $\theta\in\On$, let $g\subset \Col(\omega, \theta)$ be $V$-generic, and let $\mathfrak{A}\in V[g]$ be transitive. We say $\mathfrak{A}$ is $F$-closed if 
    \begin{enumerate}
        \item $\mathfrak{A}$ is closed under $F^{\Col(\omega, \theta), g}$, and 
        \item $F^{\Col(\omega, \theta), g}\res X\in \mathfrak{A}$ for every $X\in\mathfrak{A}$. 
    \end{enumerate}
\end{defn}
\begin{defn}[{\cite[Definition 2.8]{Sch}}]
    Let $\varphi$ be a formula in the language $\mathcal{L}_{\dot{\in}, \dot{I}_{\NS}}$, and let $M\in V$. Let $\theta=\aleph_{1}+|\TC{\{M\}}|$. We say $\varphi(M)$ is \textit{honestly consistent} if for every $F\colon \HC\to\HC$ which is strongly universally Baire in the codes, if $g\subset\Col(\omega, 2^{\theta})$ is $V$-generic, then in $V[g]$ there is a transitive model $\mathfrak{A}$ such that
    \begin{enumerate}
        \item $\mathfrak{A}$ is $F$-closed, 
        \item $\mathfrak{A}\models\ZFC^{-}$, 
        \item $(H_{\theta^{+}})^V\in \mathfrak{A}$, 
        \item ${\dot{I}_{\NS}}^{\mathfrak{A}}\cap V=\NS_{\omega_{1}}^{V}$, and 
        \item $\mathfrak{A}\models\varphi(M)$. 
    \end{enumerate}
\end{defn}

\begin{defn}[{\cite[Definition 2.10]{Sch}}] Let $\kappa$ be an infinite cardinal. 
\begin{itemize}
    \item ${\textit{Martin's Maximum}}_{\kappa}^{*, ++}$, abbreviated by $\MM_{\kappa}^{*, ++}$, is the statement that whenever $\mathcal{M}=(M, \in, \vec{R})$ is a models, where $M$ is transitive with $|M|\leq\kappa$ and $\vec{R}$ is a list of $\aleph_{1}$ relations on $M$ and whenever $\varphi$ is a $\Si_{1}$ formula in the language $\mathcal{L}_{\dot{\in}, \dot{I}_{\NS}}$ such that $\varphi(\mathcal{M})$ is honestly consistent, then $\Psi(\mathcal{M}, \varphi)$ holds true in $V$.
    \item $\MM^{*, ++}$ is the statement that $\MM^{*, ++}_{\kappa}$ holds for all $\kappa$.  
\end{itemize}
\end{defn}
Asper\'o and Schindler \cite[Definition 2.3 and Theorem 3.1]{DR2} introduced a weaker notion, called \textit{$1$-honestly consistency}, and proved that $\MM$ is $\Si_{2}$-complete. 

The same argument as in Lemma \ref{char} shows that: 
\begin{thm}[Schindler, {\cite[Theorem 2.11]{Sch}}]\label{charchar}
Let $\kappa$ be an infinite cardinal. Then $\MM_{\kappa}^{*, ++}$ implies $\MM_{\kappa}^{++}$. 
\end{thm}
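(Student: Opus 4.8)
The plan is to deduce $\MM_\kappa^{*,++}\Rightarrow\MM_\kappa^{++}$ from Lemma~\ref{char}, the only new ingredient being that being forceable by a stationary set preserving forcing implies honest consistency.

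Recall that $\MM_\kappa^{++}$ is $\FA_\kappa^{++}$ for the class of stationary set preserving complete Boolean algebras, and that by Lemma~\ref{char} this is equivalent to the assertion that for every stationary set preserving $\IP$, every transitive $\mathcal{M}$ with $|\mathcal{M}|\le\kappa$, and every $\Si_1$ formula $\varphi$ of $\mathcal{L}_{\dot\in,\dot I_{\NS}}$, if $V^\IP\models\varphi(\mathcal{M})$ then $\Psi(\mathcal{M},\varphi)$ holds in $V$. Since $\MM_\kappa^{*,++}$ yields $\Psi(\mathcal{M},\varphi)$ whenever $\varphi(\mathcal{M})$ is honestly consistent (for $\mathcal{M}$ of size $\le\kappa$), it suffices to prove the following \textbf{Key Claim}: if $\IP$ is stationary set preserving, $\varphi$ is $\Si_1$, and $\Vdash_\IP\varphi(\mathcal{M})$, then $\varphi(\mathcal{M})$ is honestly consistent. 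This is precisely the statement that honest consistency is weaker than stationary set preserving forceability, which is why $\MM^{*,++}$ refines $\MM^{++}$. Granting the Key Claim, the theorem follows by running the proof of Lemma~\ref{char} essentially verbatim: from the given data one forms the size-$\kappa$ structure $\mathcal{M}=(M,\in,\bar\IP,(\bar A_i),(\bar B_i))$ together with the $\Si_1$ sentence $\varphi$ satisfying $\Vdash_\IP\varphi(\mathcal{M})$; the Key Claim makes $\varphi(\mathcal{M})$ honestly consistent, so $\MM_\kappa^{*,++}$ supplies $\Psi(\mathcal{M},\varphi)$ in $V$, and the desired filter is then extracted as in the last paragraph of that proof.

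For the Key Claim, fix a strongly universally Baire $F\colon\HC\to\HC$ witnessed by $f=p[T]$ with $(T,U)$ absolutely complementing, put $\theta=\aleph_1+|\TC{\{\mathcal{M}\}}|$, and let $g\subset\Col(\omega,2^\theta)$ be $V$-generic; one must build in $V[g]$ a transitive, $F$-closed $\mathfrak{A}\models\ZFC^-$ with $(H_{\theta^+})^V\in\mathfrak{A}$, $\dot I_{\NS}^{\mathfrak{A}}\cap V=\NS_{\omega_1}^V$, and $\mathfrak{A}\models\varphi(\mathcal{M})$. The construction has three steps. First I would \emph{shrink the forcing}. Since $\varphi$ is $\Si_1$, a witness for $\varphi(\mathcal{M})$ in $V^\IP$ may be taken of transitive closure of size $\le\theta$ (collapse a suitable small elementary submodel of $(H_\mu)^{V^\IP}$; for the $\varphi$ arising in the reduction the witness $(\si^{-1}[G],(\tau_i^G)_{i<\omega_1})$ is already of this form), hence named by a $\IP$-name mentioning only $\le\theta$ conditions of $\IP$. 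The complete subalgebra $\IP_0\le\IP$ generated by those conditions then has $|\IP_0|\le 2^\theta$, is stationary set preserving over $V$ (a club in some $V[G_0]$ witnessing the killing of a $V$-stationary set would still do so in $V[G]$), and still forces $\varphi(\mathcal{M})$: the $\Delta_0$ matrix of $\varphi$ is downward absolute from $V[G]$ to the inner model $V[G_0]$, since its only occurrences of $\dot I_{\NS}$ assert stationarity of sets lying in $V[G_0]$, and a set stationary in $V[G]$ is a fortiori stationary in $V[G_0]$ because $\omega_1$ is preserved throughout. Next I would \emph{absorb}: as $|\IP_0|\le 2^\theta$, any $\Col(\omega,2^\theta)$-generic over $V$ reads off a $V$-generic $G_0\subset\IP_0$ with $G_0\in V[g]$ and $V[g]=V[G_0][g']$ for a residual collapse-generic $g'$. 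Finally I would take $\mathfrak{A}=(H_\Xi)^{V[G_0]}$ for $\Xi$ a large regular cardinal: then $\mathfrak{A}$ is transitive, models $\ZFC^-$, and contains the genuine $(H_{\theta^+})^V$; $\IP_0$ being stationary set preserving over $V$ forces $\omega_1^{V[G_0]}=\omega_1^V$ and $\NS_{\omega_1}^{V[G_0]}\cap V=\NS_{\omega_1}^V$, whence $\dot I_{\NS}^{\mathfrak{A}}\cap V=\NS_{\omega_1}^V$; $\Vdash_{\IP_0}\varphi(\mathcal{M})$ together with $\Si_1$-reflection inside $V[G_0]$ gives $\mathfrak{A}\models\varphi(\mathcal{M})$; and $\mathfrak{A}$ is $F$-closed because, by the coherence of the canonical extensions of strongly universally Baire functions (the ``standard argument'' recalled just after the definition of $F^{\IP,g}$), $F^{\Col(\omega,2^\theta),g}\restriction V[G_0]$ coincides with the canonical extension $F^{\IP_0,G_0}$ of $F$ to $V[G_0]$, which maps $(H_\Xi)^{V[G_0]}$ into itself and has all its restrictions to elements of $(H_\Xi)^{V[G_0]}$ again inside $(H_\Xi)^{V[G_0]}$.

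I expect the genuine work to be the shrinking step, together with the balancing act in the final step: $\mathfrak{A}$ must be thin enough to lie inside $V[g]$ with $\omega_1^{\mathfrak{A}}=\omega_1^V$ and to compute $\NS_{\omega_1}$ correctly on $V$, yet fat enough to be $F$-closed and to satisfy $\varphi(\mathcal{M})$. The thinness comes from replacing $\IP$ by a size-$\le 2^\theta$ stationary set preserving subforcing, which is exactly where the $\Si_1$ hypothesis enters and where one must verify that the witnessing stationary sets survive the descent to the inner model $V[G_0]$; the fatness, namely $F$-closedness, is what forces the use of \emph{strong} rather than plain universal Baireness of $F$, through the choice-independent coherent extension of $F$ to all set generic extensions. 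The remaining points — that $H_\Xi\models\ZFC^-$, that $(H_{\theta^+})^V$ is a genuine element of $\mathfrak{A}$, and that the passage from $\Psi(\mathcal{M},\varphi)$ back to $\FA_\kappa^{++}$ is as in Lemma~\ref{char} — are routine.
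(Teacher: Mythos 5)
Your high-level reduction is the right one, and in fact it is the paper's: the paper itself only remarks that the argument of Lemma \ref{char} goes through and cites \cite[Theorem 2.11]{Sch} for the missing ingredient, which is exactly your Key Claim that stationary set preserving forceability of $\varphi(\mathcal{M})$ implies its honest consistency. So the Key Claim is not a routine addendum but the entire outsourced content, and your proof of it has a genuine gap at the shrinking step. There is no reason a witness of hereditary size $\le\theta$ is named by a $\IP$-name ``mentioning only $\le\theta$ conditions'' (a name for a subset of $\theta$ is assembled from $\theta$ many antichains, each possibly of size $|\IP|$); what one can extract are the $\theta$ many Boolean values $||\check\xi\in\dot x||$. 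More seriously, the complete subalgebra completely generated by $\le\theta$ many elements of a complete Boolean algebra need \emph{not} have size $\le 2^\theta$: this is the Gaifman--Hales phenomenon. Concretely, if $\dot r$ is a $\Col(\omega,\mu)$-name for a real with $V[\dot r^G]=V[G]$ (collapse extensions are generated by a real), then the complete subalgebra generated by the countably many values $||\check n\in\dot r||$ still collapses $\mu$ and hence has size at least $\mu$; the analogous phenomenon occurs with the stationary set preserving collapse $\Col(\omega_1,\mu)$ and $\omega_1$ many generators. Since $\IP$ may be a stationary set preserving forcing of arbitrary size with the witness entangled with such a factor, your $\IP_0$ need not be absorbable into $\Col(\omega,2^\theta)$; without that there is no intermediate model $V[G_0]\subset V[g]$, no candidate $\mathfrak{A}=(H_\Xi)^{V[G_0]}$, and no argument. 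Producing, inside $V[g]$ with $g\subset\Col(\omega,2^\theta)$ only, an $F$-closed $\ZFC^-$ model with $\omega_1^{\mathfrak{A}}=\omega_1^V$, with $\dot{I}_{\NS}^{\mathfrak{A}}\cap V=\NS_{\omega_1}^V$, and satisfying $\varphi(\mathcal{M})$, when no $V$-generic for $\IP$ is available in $V[g]$, is precisely the work done in \cite{Sch}, and your construction does not replace it.

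A secondary point: your claim that $\IP_0$ ``still forces $\varphi(\mathcal{M})$'' rests on downward absoluteness of the $\De_0$ matrix from $V[G]$ to $V[G_0]$, which is valid only when $\dot{I}_{\NS}$ occurs negatively (asserting stationarity); a $\De_0$ formula may also assert membership in $\dot{I}_{\NS}$, and the witnessing club may lie in $V[G]\setminus V[G_0]$. This is harmless for the particular $\varphi$ produced in the Lemma \ref{char} reduction, where $\dot{I}_{\NS}$ appears only in the form $S_i\notin\dot{I}_{\NS}$, but it means the Key Claim as you state it, for arbitrary $\Si_1$ formulae, is not what your argument would prove even granting the shrinking step. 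The surrounding pieces are fine: complete subalgebras of stationary set preserving forcings are stationary set preserving, the absorption lemma does apply once $|\IP_0|\le 2^\theta$ is secured, and $(H_\Xi)^{V[G_0]}$ would indeed be $F$-closed by coherence of the canonical extensions; the problem is that the cardinality bound, the crux of the whole approach, is unjustified and false as a general principle.
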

\subsection{\texorpdfstring{$\IP_{\max}$}{Lg}}
The forcing $\IP_{\max}$ was introduced by Woodin, see \cite[Chapter 4]{Wo}. 

To define $\IP_{\max}$, we need the notion of ``generic iteration''. We consider structures of the form $(M, \in, I, a)$, where $M$ is a countable transitive model of a sufficiently large fragment of $\ZFC$, $(M, I)$ is amenable, $a\subset\omega_{1}^{M}$, and $I$ is a $M$-normal ideal over $\omega_{1}^{M}$. 
For $\gamma\leq\omega_{1}$, we say 
\[\Lg \Lg (M_{\al}, \in, I_{\al}, a_{\al}) \mid \al\leq\gamma\Rg, \Lg j_{\al, \beta} \mid \al\leq\beta\leq \gamma\Rg, \Lg G_{\al}\mid \al<\gamma\Rg\Rg\]
is a generic iteration of $(M,\in, I, a)$ of length $\gamma$ if the following hold
\begin{enumerate}
    \item $(M_{0}, \in, I_{0}, a_{0})=(M, \in, I, a)$, 
    \item for $\al<\gamma$, $G_{\al}$ is a $\mathcal{P}(\omega_{1})^{M_{\al}}/I_{\al}$-generic over $M_{\al}$, $M_{\al+1}$ is the generic ultrapower of $M_{\al}$ by $G_{\al}$, and $j_{\al, \al+1}\colon (M_{\al}, \in, I_{\al}, a_{\al})\to(M_{\al+1}, \in, I_{\al+1}, a_{\al+1})$ is the corresponding generic elementary embedding, 
    \item for $\al\leq\beta\leq\delta$, $j_{\al, \delta}=j_{\beta, \delta}\circ j_{\al, \beta}$, 
    \item for $\beta$ is a nonzero limit ordinal $\leq\gamma$, then $\Lg M_{\beta}, \Lg j_{\al, \beta} \mid \al<\beta\Rg\Rg$ is the direct limit of $\Lg M_{\al}, j_{\al, \al'} \mid \al\leq\al'<\beta\Rg$. 
\end{enumerate}
We say good $(M, \in, I, a)$ is \textit{generically iterable}, or briefly \textit{iterable}, if for all $\al\leq\omega_{1}$, every generic iteration of $(M, \in, I, a)$ of length $\al$ is well-founded. 

\begin{defn}
    The partial order $\IP_{\max}$ consists of all pairs $(M, \in, I, a)$ such that 
    \begin{enumerate}
        \item $M$ is a countable transitive model of a sufficiently large fragment of $\ZFC$+$\MA_{\aleph_{1}}$; 
        \item $(M, I)$ is amenable; 
        \item $I$ is a $M$-normal ideal on $\omega_{1}^{M}$, and $a\in \mathcal{P}(\omega_{1})^{M}$; 
        \item $(M, I)$ is iterable; and  
        \item there is an $x\in\R^{M}$ such that $\omega_{1}^{M}=\omega_{1}^{L[a, x]}$. 
    \end{enumerate}
    $(M, \in, I, a)<_{\IP_{\max}}(N, \in, J, b)$ if $(N, \in, J, b)\in \HC^{M}$ and there is a generic iteration $j\colon (N, \in, J, b)\to (N', \in, J', b')$ in $M$ such that $b'=a$ and $I\cap N'=J'$. 

    We say $(M, \in, I)$ is a \textit{$\IP_{\max}$ precondition} if there is $a\subset\omega_{1}^{M}$ such that $(M, \in, I, a)\in\IP_{\max}$. Similarly, we define a generic iteration of a precondition $(M, \in, I)$. 
\end{defn}
{\bf Notation.} For a $\IP_{\max}$ condition $p=(M, \in, I, a)\in\IP_{\max}$, we often identify $p$ with its universe $M$ for notational simplicity. 

In the $\IP_{\max}$ analysis, it is important to see for each $A\subset\R$ there are 
densely many conditions that keep track of a name of $A$. It comes from Suslin representations. 
\begin{defn}
Let $A$ be a set of reals. 
We say a precondition $(M, \in, I)$ is \textit{$A$-iterable} if 
\begin{enumerate}
    \item $A\cap M\in M$, and 
    \item $j(A\cap M)=A\cap M'$ whenever $j\colon (M, \in, I)\to(M', \in, I')$ is an iteration of $(M, \in, I)$. 
\end{enumerate}
\end{defn}
Determinacy ensures that there are densely many $A$-iterable conditions for each $A\subset\R$. Here, $\AD^{+}$ is a technical variant of $\AD$. 

\begin{lem}[Woodin]
    Assume $\AD^{+}$. Let $A$ be a set of reals. Then there are densely many conditions $p=(M, \in, I, a)\in \IP_{\max}$ such that 
    \begin{enumerate}
    \item $(\HC^{M}, \in, A\cap M)\prec ( \HC, \in, A)$, 
    \item $(M, \in, I)$ is $A$-iterable, and 
    \item if $j\colon (M, \in, I)\to (M', \in, I')$ is any iteration of $(M, \in, I)$, then 
    \[ ( \HC^{M'}, \in, A\cap M')\prec ( \HC, \in, A). \]
    \end{enumerate}
\end{lem}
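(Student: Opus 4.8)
The plan is to realize the dense set of $A$-iterable, fully elementary conditions as a byproduct of a Löwenheim–Skolem/condensation argument combined with the $\AD^+$ fact that every set of reals is Suslin and co-Suslin with absolutely complementing tree representations. First I would fix $A\subset\R$ and, using $\AD^+$ (via Woodin's theorem that under $\AD^+$ every set of reals admits a $\kappa$-Suslin representation for $\kappa<\Theta$ together with the Suslin-co-Suslin closure of the Suslin pointclasses), fix trees $S,\tilde S$ on $\omega\times\gamma$ with $A=p[S]$ and $\R\setminus A=p[\tilde S]$. Then I would take a large $\theta$ and, inside $H_\theta$, build a countable elementary substructure
\[
\sigma\colon (\bar H,\in,\bar A,\bar S,\bar{\tilde S})\prec (H_\theta,\in,A,S,\tilde S),
\]
transitively collapse it to get a countable transitive $\bar H$ together with $A\cap\bar H=\sigma^{-1}(A)\in\bar H$ and local versions $\bar S,\bar{\tilde S}$ of the trees, and then build $M$ as an appropriate rank-initial segment (or a further Skolem hull closed under the relevant Suslin machinery) satisfying a sufficiently large fragment of $\ZFC+\MA_{\aleph_1}$, carrying a suitable $M$-normal precipitous ideal $I$ on $\omega_1^M$ (here one uses that below a measurable Woodin — available from $\AD^+$ via core model theory — or directly from the $\IP_{\max}$ analysis, there are such $(M,\in,I)$; this is exactly Woodin's basic iterability lemma for $\IP_{\max}$). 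Clause (1) is then immediate from elementarity of $\sigma$ and the agreement $\HC^M = \HC^{\bar H}$.

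Next I would verify $A$-iterability, clause (2). The point is that the trees $\bar S,\bar{\tilde S}$ witness, inside $M$, that $A\cap M$ is Suslin and co-Suslin, and generic iterations of $(M,\in,I)$ are iterated generic ultrapowers, so they send $\bar S,\bar{\tilde S}$ to trees $S',\tilde S'$; by elementarity of the generic embedding $j$, $M'$ still thinks $j(A\cap M)=p[S']$ and its complement is $p[\tilde S']$. Because the iteration has countable length and is well-founded (good preconditions are generically iterable), absoluteness of wellfoundedness and the fact that $S,\tilde S$ are genuinely complementing on all of $\R$ force $p[S']=A$ and $p[\tilde S']=\R\setminus A$ as computed in $V$; hence $j(A\cap M)=A\cap M'$. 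This is the standard "Suslin capturing is preserved under generic iteration" argument, and it is the technical heart of the lemma.

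For clause (3), the point is that elementarity $(\HC^M,\in,A\cap M)\prec(\HC,\in,A)$ propagates along $j$. Given an iteration $j\colon(M,\in,I)\to(M',\in,I')$, one observes that $j\res\HC^M$ is elementary from $(\HC^M,\in,A\cap M)$ into $(\HC^{M'},\in,A\cap M')$ — this is built into the definition of a generic iteration together with clause (2) just established, since $j$ is elementary for the structure with the predicate for $A\cap M$ — and one composes with the elementarity from clause (1). More precisely, any $\Sigma_n$ fact about $A\cap M'$ in $\HC^{M'}$ pulls back along $j$ to a $\Sigma_n$ fact about $A\cap M$ in $\HC^M$, which by (1) reflects the corresponding fact about $A$ in $\HC$; conversely one uses that $j$ is cofinal/elementary enough and that $A\cap M'=j(A\cap M)$. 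The density statement itself is then routine: given any $\IP_{\max}$ condition $q=(N,\in,J,b)$, apply the construction above inside a large enough $\HC^{N^*}$ for a further condition extending $q$, or simply note that the set of such $p$ is dense because starting from any condition one can first iterate it out and then pass to a countable elementary hull capturing $A$.

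The main obstacle I expect is clause (2): making precise the preservation of the complementing tree pair under iterated generic ultrapowers, in particular checking that the direct limits at limit stages of the iteration do not destroy the capturing property, and that the length-$\omega_1$ iterations (not just the finite or countable-successor ones) still yield $p[S']=A$. This requires the branch-absoluteness argument for Suslin representations under iteration, i.e., that $x\in p[S']$ in $V$ iff $x\in p[S]$ in $V$, for which one typically uses that any real $x$ together with a branch through $S'$ witnessing $x\in p[S']$ can be reflected back, using wellfoundedness of the iteration, into a branch through $S$. Everything else — the Löwenheim–Skolem hull, the transitive collapse, arranging $\MA_{\aleph_1}$ and the $M$-normal ideal, clause (5) of the $\IP_{\max}$ definition via $\omega_1^M=\omega_1^{L[a,x]}$ for a suitable real $x$ in the hull — is standard bookkeeping in the $\IP_{\max}$ framework and I would cite \cite[Chapter 4]{Wo}.
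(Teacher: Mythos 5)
Your proposal has a genuine gap at exactly the place you flag as the technical heart, and the mechanism you are missing is the one the paper's proof is built around. If you only put the \emph{collapsed} trees $\bar S,\bar{\tilde S}$ (images of $S,\tilde S$ under the inverse of a countable hull map) into the condition, then for a generic iteration $j\colon (M,\in,I)\to(M',\in,I')$ of length up to $\omega_1$ there is no valid way to ``reflect a branch of $j(\bar S)$ back into $S$'': nothing prevents $p[j(\bar S)]$ from containing reals outside $A$, so $j(A\cap M)=A\cap M'$ does not follow. The paper avoids this by never iterating the small trees alone: using the fact that $\mathrm{HOD}^{L[S,y]}_S\models\ZFC+{}$``$\omega_2^{L[S,y]}$ is Woodin'', it fixes a \emph{proper-class} $\ZFC$ model $M$ containing the actual $V$-trees $T,U$ for $A$ and its complement, obtains the condition as a rank initial segment $M_\lambda[g_0,g_1]$ below a measurable (with the precipitous ideal induced by a measure, and $g_1$ forcing $\MA$), and then lifts any iteration of the condition to an elementary embedding $j^*$ of $M[g_0,g_1]$; the equalities $p[T]=p[j^*(T)]$ and $p[U]=p[j^*(U)]$ then follow from the one-sided inclusions $p[T]\subset p[j^*(T)]$, $p[U]\subset p[j^*(U)]$ together with complementarity and absoluteness of wellfoundedness. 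Your Löwenheim--Skolem hull produces no such background model and no lift, so clause (2) is not established. (A smaller but real error in the same step: under $\AD^{+}$ alone it is \emph{not} true that every set of reals is Suslin; the paper argues by contradiction and uses the $\Si^{2}_{1}$ scale property plus the $\Delta^{2}_{1}$ basis theorem to replace a putative counterexample by a Suslin--co-Suslin one.)

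Clause (3) has a second gap: composing $j\res\HC^M$ with the clause (1) elementarity does not yield $(\HC^{M'},\in,A\cap M')\prec(\HC,\in,A)$, because the parameters of $\HC^{M'}$ are in general not in the range of $j$, so there is nothing to ``pull back.'' The paper's proof handles (1) and (3) simultaneously by a different device which your plan has no analogue of: it chooses a $\Delta^{2}_{1}$ set $B$ such that \emph{any} transitive $\ZF$ model closed under a uniformizing function $F$ of $B$ is $(\HC,\in,A)$-elementary, takes $F$ itself Suslin--co-Suslin via trees $V,W$ placed in the same background model, and then observes that closure under $F$ is preserved by iterations by the same tree-lifting argument as above. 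Without this (or some equally robust ``capturing'' device, such as the strategy-capturing of Section 3), neither the elementarity of arbitrary iterates nor $A$-iterability can be recovered from a hull construction, so the proposal as written does not prove the lemma.
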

\begin{proof}
The key fact is the following. 
\begin{fact}[Woodin, {\cite[Theorem 7.1]{DM1}}, {\cite{AD+}}]\label{AD+fact}
    Assume $\AD^{+}$. Then 
    \begin{enumerate}
        \item The pointclass $\Si^{2}_{1}$ has the scale property, and 
        \item Every lightface $\Si^{2}_{1}$ collection of sets of reals has a lightface $\Delta^{2}_{1}$ member. 
    \end{enumerate}
\end{fact}
Now, suppose otherwise. Let $p_{0}=(M_{0}, \in, I_{0}, a_{0})\in\IP_{\max}$, and let $A\subset \R$ be a conterexample to the statement of the theorem. Then we may assume that $A$ is Suslin and co-Sudlin by Fact \ref{AD+fact}. Let $T$ and $U$ be trees projecting $A$ and its complement respectively. 

Since $\Delta^{2}_{1}(p_{0})$ is closed under complements, projections, and countable unions, so there exist a $\Delta^{2}_{1}(p_{0})$ set $B\subset\R\times\R$ such that whenever $F\colon\R\to\R$ uniformizes $B$ and $N$ is a transitive model of $\ZF$ closed under $F$, then 
\[ (\HC^{N}, \in, A\cap N)\prec (\HC, \in, A). \]
Again, by Fact \ref{AD+fact}, let $F\colon\R\to\R$ be $\Delta^{2}_{1}(p_{0})$ and uniformize $B$. Let $V$ and $W$ be trees projecting $F$ and its complement respectively. 

The next key fact is the following. 
\begin{fact}[{\cite[Theorem 5.4]{KWAD}}]\label{KoeWod}
    Assume $\AD$. Let $S$ be a set of ordinals. Then there exists a real $x$ such that for all reals $y$ with $x\in L[S, y]$, 
    \[ \mathrm{HOD}^{L[S, y]}_{S}\models\ZFC+\omega_{2}^{L[S, y]} \text{ is a Woodin cardinal. }\]
\end{fact}
By Fact \ref{KoeWod}, let $M$ be a transitive proper class model of $\ZFC$ such that 
\begin{enumerate}
\item $p_{0}, T, U, V, W\in M$, and 
\item there exists a countable ordinal $\delta$ such that $M\models``\delta$ is a Woodin cardinal''. 
\end{enumerate}
Let $\kappa<\la<\delta$ be such that $\kappa$ is measurable in $M$ and $\la$ is inaccessible in $M$. Let $g_{0}\subset\Col(\omega, <\kappa)$ be $M$-generic, and let $g_{1}$ be $M[g_{0}]$-generic for the standard c.c.c. poset to force $\MA$. Then in $M[g_{0}]$, an ideal dual to a fixed normal measure on $\kappa$ in $M$ generates a precipitous ideal in $M[g_{0}]$. Since c.c.c. forcings preserve precipitous ideals, it also generates a precipitous ideal in $M[g_{0}, g_{1}]$. 

Let $I$ be a precipitous ideal in $M[g_{0}, g_{1}]$. Then $(M_{\la}[g_{0}, g_{1}], \in, I)$ is iterable. 

Since $T\in M$, $A\cap M_{\la}[g_{0}, g_{1}]\in M_{\la}[g_{0}, g_{1}]$. 
Since $M_{\la}[g_{0}, g_{1}]$ is closed under $F$, we have that 
\[ (\HC^{M_{\la}[g_{0}, g_{1}]}, \in, A\cap M_{\la}[g_{0}, g_{1}])\prec(\HC, \in, A). \]
Fix an iteration 
\[j\colon (M_{\la}[g_{0}, g_{1}], \in, I)\to (M', \in, I').\]
Let 
\[j^{*}\colon (M[g_{0}, g_{1}], \in, I)\to (M^{*}, \in, I^{*})\]
be the lift of $j$. Then we have that 
\[ p[T]=p[j^{*}(T)], \text{ and } p[U]=p[j^{*}(U)],\]
and similarly, 
\[ p[V]=p[j^{*}(V)], \text{ and } p[W]=p[j^{*}(W)]. \]
Hence $A\cap M'\in M'$, and $M^{*}$ is closed under $F$. 
So we have that 
\[ (\HC^{M'}, \in, A\cap M')\prec(\HC, \in, A). \]
This shows that $A$ is not a counterexample to the statement of the theorem. This is a contradiction!
\end{proof}
We list the facts about $\IP_{\max}$ we need. Assume $\AD^{+}$.
\begin{itemize}
    \item $\IP_{\max}$ is $\si$-closed and homogeneous.  
    \item $\IP_{\max}$ forces the following: 
    \begin{itemize}
        \item $2^{\aleph_{0}}=\aleph_{2}$, 
        \item $\Theta^{V}=\omega_{3}$, if $\Theta^{V}$ is regular in $V$, 
        \item $\NS_{\omega_{1}}$ is saturated. 
    \end{itemize}
    \item Let $G$ be $\IP_{\max}$-generic. Define 
    \[A_{G}:=\bigcup\{a \mid \exists (M, I)\,  [(M, \in, I, a)\in G ]\}. \]
    Then for every $p=(M, \in, I, a)\in G$, there is a unique generic iteration of $p$ of length $\omega_{1}$ sending $a$ to $A_{G}$. 
    
    We let $\mathcal{P}(\omega_{1})_{G}$ be the set of all $B$ such that there is a $(M, \in, I, a)\in G$ and $b\in\mathcal{P}(\omega_{1})^{M}$ such that $j(b)=B$ where $j$ is the unique generic iteration of $(M, I)$ of length $\omega_{1}$ sending $a$ to $A_{G}$. Then $\mathcal{P}(\omega_{1})_{G}=\mathcal{P}(\omega_{1})^{V}$. 
\end{itemize}
For more details of the $\IP_{\max}$ forcing, see \cite{Lar} and \cite{Wo}. 

\section{Capturing mice and \texorpdfstring{$\Gamma$}{Lg}-Woodins}
The following theorem is due to Woodin. 
\begin{thm}[Woodin, {\cite[Theorem 7.14]{outline}}]\label{extalg}
    Let $\Si$ be am $(\omega_{1}+1)$-iteration strategy for $M$, and suppose that $\kappa<\delta$ are countable ordinals such that 
    \[M\models \ZF^{-}+\delta \text{ is Woodin }, \]
    then there is a $\mathbb{Q}\subset V^{M}_{\delta}$ such that 
    \begin{enumerate}
        \item $M\models\mathbb{Q}$ is a $\delta$-c.c. complete Boolean algebra, and 
        \item for any real $x$, there is a countable iteration tree $\mathcal{T}$ on $M$ according to $\Si$ based on the window $(\kappa, \delta)$ with the last model $\mathcal{M}^{\mathcal{T}}_{\al}$ such that $i^{\mathcal{T}}_{0, \al}$ exists and $x$ is $i^{\mathcal{T}}_{0, \al}(\mathbb{Q})$-generic over $\mathcal{M}^{\mathcal{T}}_{\al}$. 
    \end{enumerate}
    We call the poset $\mathbb{Q}$ Woodin's extender algebra of $M$ based on the window $(\kappa, \delta)$ and denote it by $\mathbb{B}^{M}_{(\kappa, \delta)}$. In the case that $\kappa=0$, we denote it by $\mathbb{B}^{M}_{\delta}$

    The process of iterating $M$ to make a real generic is called genericity iteration. 
\end{thm}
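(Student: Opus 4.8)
The plan is to take $\mathbb{Q}$ to be Woodin's extender algebra (with $\omega$ generators) computed inside $M$, to obtain clause (1) by a routine reflection argument, and to obtain clause (2) by a ``least bad axiom'' genericity iteration driven by $\Si$.

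\textbf{Construction of $\mathbb{Q}$.} Working inside $M$, let $\mathbb{Q}=\mathbb{B}^{M}_{(\kappa,\delta)}$ be the Lindenbaum--Tarski algebra of the following infinitary propositional theory. The language has propositional variables $\langle\dot p_{n}\colon n<\omega\rangle$ and allows conjunctions and disjunctions of size $<\delta$; the axioms are the usual extender-algebra axioms: for each extender $E$ on the sequence of $M$ with $\kappa<\crit(E)$ and all generators of $E$ below $\delta$, and each sequence $\vec\varphi=\langle\varphi_{\xi}\colon\xi<\crit(E)\rangle$ of formulas of rank below $\crit(E)$, the axiom asserting that $\bigvee_{\xi}\varphi_{\xi}$ is equivalent to the disjunction read off from $i_{E}(\vec\varphi)$. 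Then $\mathbb{Q}\subseteq V^{M}_{\delta}$, and since the language is closed under $<\delta$-ary disjunctions, $\mathbb{Q}$ is $<\delta$-complete as a Boolean algebra in $M$. The point of using the variables $\dot p_{n}$ is that a truth assignment is literally a real, so $\mathbb{Q}$-genericity of a real $x$ over the relevant model amounts to $x$, read as a truth assignment, satisfying every formula whose equivalence class in the algebra is $0$.

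\textbf{Clause (1).} Carry out in $M$ the standard reflection argument. Given a maximal antichain $A=\langle[\varphi_{\xi}]\colon\xi<\lambda\rangle$, use inaccessibility of $\delta$ to find $\bar\delta<\delta$ such that $\langle\varphi_{\xi}\colon\xi<\bar\delta\rangle$ consists of formulas of rank $<\bar\delta$ and is already a maximal antichain of the subalgebra generated at stage $\bar\delta$; using Woodinness of $\delta$, fix an extender $E$ on the $M$-sequence with $\crit(E)=\bar\delta$ reflecting $A$. The extender-algebra axiom attached to $E$ and $\langle\varphi_{\xi}\colon\xi<\bar\delta\rangle$, together with $\bigvee_{\xi<\bar\delta}\varphi_{\xi}$ being $1$ in the target of $i_{E}$ and hence in $M$, forces every $[\varphi_{\xi}]$ with $\xi\geq\bar\delta$ to be compatible with some $[\varphi_{\eta}]$, $\eta<\bar\delta$, so $\lambda\leq\bar\delta<\delta$. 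Being $<\delta$-complete and $\delta$-c.c., $\mathbb{Q}$ is complete, which is clause (1).

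\textbf{Clause (2) and the main obstacle.} Fix a real $x$, and build an iteration tree $\mathcal{T}$ on $M$ according to $\Si$, using only extenders in the window $(\kappa,\delta)$, by recursion with the ``least bad axiom'' rule. At stage $\beta$, with last model $\mathcal{M}^{\mathcal{T}}_{\beta}$ on which $i^{\mathcal{T}}_{0,\beta}$ is defined: if $x$ is $i^{\mathcal{T}}_{0,\beta}(\mathbb{Q})$-generic over $\mathcal{M}^{\mathcal{T}}_{\beta}$, stop; otherwise $x$, as a truth assignment, falsifies some instance of an extender-algebra axiom in $i^{\mathcal{T}}_{0,\beta}(\mathbb{Q})$ coming from an extender $F$ on the $\mathcal{M}^{\mathcal{T}}_{\beta}$-sequence with $\crit(F)$ in the window; pick such an $F$ with $\crit(F)$ least, set $E^{\mathcal{T}}_{\beta}=F$, applied to the $\mathcal{T}$-predecessor dictated by the normality of iteration trees; at limit stages let $\Si$ choose the branch. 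One then checks: (i) by minimality of $\crit(F)$, no later extender disturbs an already corrected axiom, so $\langle\crit(E^{\mathcal{T}}_{\beta})\rangle_{\beta}$ strictly increases, $0$ stays $\mathcal{T}$-below every node, and all the $i^{\mathcal{T}}_{0,\beta}$ are defined; (ii) the recursion halts at some countable stage $\alpha$: a strictly increasing sequence of critical points cannot continue forever, and since $\delta$ --- and hence the relevant portion of $M$ and of each wellfounded model along $\mathcal{T}$ --- is countable, the bound one extracts lies below $\omega_{1}$; then $\mathcal{M}^{\mathcal{T}}_{\alpha}$ and $i^{\mathcal{T}}_{0,\alpha}$ witness clause (2). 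The main obstacle is precisely (i) and (ii): one must verify that the least-bad-axiom bookkeeping is compatible with the normality of iteration trees so that $0$ stays on the main branch (and the maps $i^{\mathcal{T}}_{0,\beta}$ are available), and --- the genuinely delicate point --- that the iteration terminates before $\omega_{1}$; both rest on the combinatorics of the extender-algebra axioms under the maps $i_{E}$ and on the wellfoundedness of the models occurring along $\mathcal{T}$ guaranteed by $\Si$. Clause (1) is by comparison routine.
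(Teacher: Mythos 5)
The paper itself does not prove this statement: it is quoted as Woodin's theorem with a citation to Steel's \emph{Outline}, so your sketch has to be judged against the standard argument. Your construction of $\mathbb{Q}$ as the Lindenbaum algebra of the infinitary extender-algebra axioms and your $\delta$-c.c.\ reflection argument via an extender witnessing Woodinness are exactly the standard ones and are fine, as is the ``least bad axiom'' recursion itself.

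The genuine gap is in your termination argument for clause (2). The assertion that ``a strictly increasing sequence of critical points cannot continue forever, and since $\delta$ \dots is countable, the bound one extracts lies below $\omega_{1}$'' does not work: the critical points used at stage $\beta$ live below $i^{\mathcal{T}}_{0,\beta}(\delta)$, and these images grow along the iteration, so nothing bounds the critical points by the original countable $\delta$; an increasing sequence of countable ordinals can perfectly well run for $\omega_{1}$ many stages. The correct argument is a reflection/pressing-down argument at $\omega_{1}$, and it is precisely here that the hypothesis that $\Si$ is an $(\omega_{1}+1)$-strategy (not merely an $\omega_{1}$-strategy) is used, something your sketch never invokes: if the construction ran for $\omega_{1}$ stages, one applies $\Si$ to obtain a cofinal wellfounded branch $b$ through the tree of length $\omega_{1}$; along $b$ the critical points are increasing and continuous, so there is a club of $\alpha\in b$ with $\crit(E^{\mathcal{T}}_{\alpha})=\alpha$, and since each bad axiom violated at such a stage has its data (the sequence of formulas of rank below the critical point) fixed by the later branch embeddings, Fodor's lemma produces a single axiom that $x$ both falsifies (as the reason an extender was used at that stage) and satisfies (because $x$ satisfies one of the $<\crit(E)$-many disjuncts and the branch embedding moves it into the stretched disjunction), a contradiction. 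A smaller inaccuracy: normality of the tree requires the lengths (indices) of the extenders used to be increasing, not their critical points; critical points increase only along branches, and this is what makes the bookkeeping in your step (i) go through.
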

The following concept is due to Woodin. 
\begin{defn}[term capturing]
    Let $A\subset \R$. Let $M$ be a countable premouse, let $\delta\in M$, and assume that $M\models\ZFC^{-}+``\delta$ is a Woodin cardinal''. Let $\Si$ be an $(\omega_{1}, \omega_{1}+1)$-iteration strategy for $M$. Let $\tau\in M^{\Col(\omega, \delta)}$. 

    Then we say $(M, \delta, \tau, \Si)$ \textit{captures} $A$ if the following hold: 
    \begin{enumerate}
        \item $\Si$ satisfies hull condensation\footnote{See \cite[Definition 1.31]{Gri}. }, and branch condensation\footnote{See \cite[Definition 2.4]{Gri}. }, and is positional\footnote{See \cite[Definition 2.35]{Gri}. }; 
        \item if $\mathcal{T}$ is an iteration tree on $M$ of successor length $\theta+1<\omega_{1}$ which is according to $\Si$ such that the main branch $[0, \theta]_{\mathcal{T}}$ does not drop, if 
        \[ \pi^{\mathcal{T}}_{0, \theta}\colon M\to\mathcal{M}^{\mathcal{T}}_{\theta}\]
        is the iteration map, and if $g\in V$ is $\Col(\omega, \pi^{\mathcal{T}}_{0, \theta}(\delta))$-generic over $\mathcal{M}^{\mathcal{T}}_{\theta}$, then 
        \[ \pi^{\mathcal{T}}_{0, \theta}(\tau)^{g}=A\cap \mathcal{M}^{\mathcal{T}}_{\theta}[g]. \]
    \end{enumerate}
    We say that $(M, \Si)$ captures $A$ if there is $\tau, \delta\in M$ such that $(M, \delta, \tau, \Si)$ captures $A$. 
\end{defn}

\begin{defn}
    Let $A\subset\R$. We say $(M, \Si)$ \textit{strongly captures} $A$ if $(M, \Si)$ captures $A$ and for all $\kappa\geq\aleph_{1}$, there is some $(\kappa^{+}, \kappa^{+}+1)$-iteration strategy $\tilde{\Si}$ for $M$ extending $\Si$ and such that $\tilde{\Si}$ has hull condensation, and branch condensation, and is positional. 
\end{defn}

We will make use of $\Si$-mice to produce $A$-iterable conditions for $A\in\Gamma^{\infty}$. For more details for $\Si$-mice, see \cite{CMI}. 

    Let $M$ be a premouse, and let $\Si$ be an iteration strategy for $M$. 
    Let $X$ be a self-well-ordered transitive set with $M\in L_{1}(X)$. We say $\mathcal{N}$ is a \textit{$\Si$-premouse} over $X$ if $\mathcal{N}$ is a $J$-model of the form $J_{\al}[\vec{E}, \vec{S},X]$ satisfying the following; 
\begin{enumerate}
    \item $\vec{E}$ codes a sequence of extenders satisfying the usual axioms for fine extender sequences. (See \cite{FSIT}, \cite{CMI}, and \cite{outline}.)
    \item $\vec{S}$ codes a partial iteration strategy for $M$ as follows: 

    let $\gamma <\gamma+\delta\leq\al$ be such that $J_{\gamma}[\vec{E}, \vec{S}, X]\models\ZFC^{-}$ and $\gamma$ is the largest cardinal of $J_{\gamma+\delta}[\vec{E}, \vec{S}, X]$. Suppose that $\mathcal{T}\in J_{\gamma}[\vec{E}, \vec{S}, X]$ is $J_{\gamma}[\vec{E}, \vec{S}, X]$-least such that $\mathcal{T}$ is an iteration tree on $M$ of limit length, $\mathcal{T}$ is according to $\vec{S}\res\gamma$, but $(\vec{S}\res\gamma)(\mathcal{T})$ is undefined, Suppose also that $\delta=\lh(\mathcal{T})$, and $\delta$ does not have measurable cofinality in $J_{\gamma+\delta}[\vec{E}, \vec{S}, X]$. Then $\Si(\mathcal{T})$ is defined, and $\vec{S}(\gamma+\delta)$ is an amenable code for $(\mathcal{T}, \Si(\mathcal{T}))$. 
\end{enumerate}
Let $\mathcal{N}$ be a $\Si$-premouse over $X$, and let $\Gamma$ be an iteration strategy for $\mathcal{N}$. 
Then we say $\Gamma$ \textit{moves $\Si$ correctly} if every iterate $\mathcal{N}'$ of $\mathcal{N}$ according to $\Gamma$ is again a $\Si$-premouse over $X$. 

We say $\mathcal{N}$ is a \textit{$\Si$-mouse over $X$} if for every sufficiently elementary $\pi\colon\bar{\mathcal{N}}\to\mathcal{N}$ with $\bar{\mathcal{N}}$ being countable and transitive, there is some iteration strategy $\Gamma$ for $\bar{\mathcal{N}}$ which witnesses $\bar{\mathcal{N}}$ is $\omega_{1}+1$-iterable and which moves $\Si$ correctly. 

The next is the $\Si$-mouse version of $M^{\#}_{n}$. 
\begin{defn}
    Let $M$ be a countable premouse, and let $\Si$ be an iteration strategy for $N$. Let $X$ be a self-well-ordered transitive set with $N\in L_{1}(X)$, and let $n\in\omega$. Then we denote by 
    \[ M^{\#, \Si}_{n}(X)\]
    the unique $\Si$-mouse over $X$ which is sound above $X$, not $n$-small above $X$, and such that every proper initial segment is $n$-small above $X$, if it exists. 
\end{defn}
The next theorem is crucial for Main Theorem \ref{mainthm}. 
\begin{thm}[Schindler, {\cite[Theorem 3.14]{Sch}}]\label{GOD}
Let $A\in\Gamma^{\infty}$, and suppose that $(M, \delta, \tau, \Si)$ captures $A$. Let $X\in\HC$, and suppose that 
\[ N=M^{\#, \Si}_{2}(M, X)\]
exists. Let $\delta_{0}$ be the bottom Woodin cardinal of $N$, let $g_{0}\in V$ be $(\Col(\omega_{1}, <\delta_{0}))^{N}$-generic over $N$, and let $g_{1}\in V$ be $\mathbb{Q}$-generic over $N[g_{0}]$, where $\mathbb{Q}\in N[g_{0}]$ is the standard c.c.c. forcing for Martin's Axiom. 
Let $\kappa$ be the critical point of the top extender of $N$. 
Then 
\[ p=((N||\kappa)[g_{0}, g_{1}], \in, (\NS_{\omega_{1}})^{(N||\kappa)[g_{0}, g_{1}]})\]
is an $A$-iterable $\IP_{\max}$ precondition.     
\end{thm}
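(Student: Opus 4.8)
The plan is to check separately the two halves of the conclusion: that $p$ is a $\IP_{\max}$ precondition, and that it is $A$-iterable. Throughout write $M_p=(N||\kappa)[g_0,g_1]$ and $I_p=(\NS_{\omega_1})^{M_p}$, so that $p=(M_p,\in,I_p)$. First I would record the structural facts about $N=M^{\#,\Si}_2(M,X)$ on which everything rests: since $X\in\HC$, $N$ is countable, so the defining clause of ``$\Si$-mouse'' applied to the identity $N\to N$ produces an $(\omega_1+1)$-iteration strategy $\Gamma$ for $N$ moving $\Si$ correctly; and since $N$ is not $2$-small above $(M,X)$ while each proper initial segment is, $N$ carries exactly two Woodin cardinals $\delta_0<\delta_1$ above $(M,X)$ and an active top extender $E$ whose critical point $\kappa$ satisfies $\delta_1<\kappa$ and is measurable, hence inaccessible, in $N$. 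Thus $N||\kappa=V^N_\kappa\models\ZFC$, and the posets $\Col(\omega_1,<\delta_0)^N$ and $\mathbb Q$, both of size $<\kappa$, can be taken over $N||\kappa$. In effect $N$ has exactly the large-cardinal shape of $M_2^{\#}$ relative to the operator $M\mapsto M^{\#,\Si}$, and the whole argument is the $\Si$-mouse version of Woodin's construction of $A$-iterable $\IP_{\max}$ conditions from $M_2^{\#}$ (see \cite[Chapter 4]{Wo}, \cite{Lar}, and \cite{Sch}).

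For the precondition clauses I would dispose of the routine ones first. $M_p$ is transitive and countable, the forcings involved living in the countable model $N||\kappa$. The collapse $\Col(\omega_1,<\delta_0)^N$ is countably closed, so it adds no reals, preserves $\omega_1$, preserves the inaccessibility of $\kappa$, makes $\delta_0=\omega_2$, and forces $\CH$; hence the standard finite-support c.c.c. iteration $\mathbb Q$ for $\MA_{\aleph_1}$ has size $\aleph_2=\delta_0<\kappa$, lies in $(N||\kappa)[g_0]=V^{N[g_0]}_\kappa$, coincides with the $\mathbb Q$ of the statement, and, being c.c.c., preserves $\omega_1$ and $\omega_2$, so that $M_p=V^{N[g_0,g_1]}_\kappa\models\ZFC+\MA_{\aleph_1}$. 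Amenability of $(M_p,I_p)$ and normality of $I_p$ are immediate from $M_p\models\ZFC$. For clause (5) I would, working in $M_p$, take $a\subseteq\omega_1^{M_p}$ coding a sequence of surjections $\omega\to\alpha$ for infinite $\alpha<\omega_1^{M_p}$; then every countable ordinal of $M_p$ is countable in $L[a]$, so $\omega_1^{L[a]}=\omega_1^{M_p}$ and clause (5) holds with $x=\emptyset$.

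The crux is the iterability of $(M_p,I_p)$: every generic iteration of $(M_p,\in,I_p,a)$ of length $\le\omega_1^{M_p}$ must be well-founded. I would argue in two stages. First, since $\delta_0$ is Woodin in $N$ and $g_0$ collapses everything below it to size $\le\aleph_1$, $\NS_{\omega_1}$ is precipitous in $(N||\kappa)[g_0]$, and this precipitousness is maintained through the c.c.c. forcing $g_1$; so each individual generic ultrapower along a generic iteration of $(M_p,I_p)$ is well-founded. Second — the delicate point — one realizes a whole generic iteration $\langle (M_\alpha,\in,I_\alpha,a_\alpha),\, j_{\alpha\beta}\rangle$ of $(M_p,\in,I_p,a)$ as a single countable iteration of $N$ according to $\Gamma$: the generic object for $\mathcal P(\omega_1)^{M_\alpha}/I_\alpha$ at each stage is, via the precipitousness descending from $\delta_0$, matched by a short iteration on the $N$-side, and the second Woodin $\delta_1$ together with the top extender $E$ supply exactly the room to amalgamate these into one countable iteration tree on $N$; since $N$ is $(\omega_1+1)$-iterable, that tree — hence every $M_\alpha$ — is well-founded. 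Making this realization precise, i.e. tracking uniformly through the $\le\omega_1^{M_p}$ stages how a generic for $\mathcal P(\omega_1)/\NS$ over an iterate of $M_p$ corresponds to a further iteration of the corresponding iterate of $N$, is the main obstacle; it is the heart of Woodin's $\IP_{\max}$ analysis and I would transport it to the $\Si$-mouse setting essentially verbatim. It then follows that $p=(M_p,\in,I_p)$ is a $\IP_{\max}$ precondition.

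Finally, for $A$-iterability I would use that $(M,\delta,\tau,\Si)$ captures $A$ and that $N$ is a $\Si$-\emph{mouse}, so $N$ — and, since $\Gamma$ moves $\Si$ correctly, every iterate of $N$ — carries a code for $\Si$ on its $\vec S$-predicate and can internally run the genericity iterations of $M$ witnessing capturing. Inside $M_p$ I would put $\tilde A=\{r\in\R^{M_p}: r\in\pi(\tau)^{\bar M[r]}\}$, where $\pi\colon M\to\bar M$ is the $\Si$-genericity iteration making $r$ generic over $\bar M$ for the image of the extender algebra; by the capturing clause, together with branch condensation and positionality of $\Si$ — which make $\pi(\tau)^{\bar M[r]}$ independent of the chosen genericity iteration — we get $\tilde A=A\cap\R^{M_p}=A\cap M_p$, so $A\cap M_p\in M_p$. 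For the second clause, let $j\colon(M_p,\in,I_p,a)\to(\tilde M,\in,\tilde I,\tilde a)$ be any generic iteration. Since $\crit(j)=\omega_1^{M_p}$ lies above the ranks of $M$, $X$, $\tau$ and of the $\Si$-fragment used in the recipe (all hereditarily countable in $N||\kappa$), $j$ fixes them; and by the realization of $j$ as an iteration of $N$ according to $\Gamma$ from the previous step, $\tilde M$ is built over an iterate $\tilde N$ of $N$ that is again a $\Si$-premouse over $(M,X)$, so $\tilde N$ computes $\Si$ correctly on its countable trees on $M$. Hence $j(\tilde A)$ is given by the same recipe in $\tilde M$, now with the true $\Si$, so $j(\tilde A)=A\cap\R^{\tilde M}=A\cap\tilde M$; together with $\tilde A=A\cap M_p$ this gives $j(A\cap M_p)=A\cap\tilde M$, the remaining clause. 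Thus $p$ is an $A$-iterable $\IP_{\max}$ precondition.
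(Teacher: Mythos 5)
First, note that the paper does not prove this statement at all: Theorem \ref{GOD} is imported verbatim from Schindler's paper (cited there as Theorem 3.14 of \cite{Sch}), so there is no in-paper proof to compare against; your proposal has to stand on its own.

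On its own terms it has a genuine gap at its center. The routine parts are fine ($N||\kappa\models\ZFC$ since $\kappa$ is measurable in $N$; $\Col(\omega_1,<\delta_0)$ below a Woodin makes $\NS_{\omega_1}$ precipitous and c.c.c.\ forcing preserves this; $\MA_{\aleph_1}$ and clause (5) are as you say). But your key mechanism --- ``realizing a generic iteration of $(M_p,I_p)$ as a single countable iteration tree on $N$ according to $\Gamma$, amalgamated using $\delta_1$ and the top extender'' --- is not a correct description of what happens. Generic iterations are ultrapowers by external generic filters on $\mathcal{P}(\omega_1)/\NS_{\omega_1}$; they lift to generic ultrapowers of $N[g_0,g_1]$ (since $\mathcal{P}(\omega_1)^{N[g_0,g_1]}\subset N||\kappa[g_0,g_1]$), but these are not formed from extenders on $N$'s sequence and are not iterations according to the strategy $\Gamma$, so ``$\Gamma$ moves $\Si$ correctly'' cannot be invoked for them. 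For plain generic iterability this detour is also unnecessary: Woodin's lemma that a \emph{countable} transitive model with a precipitous ideal on its $\omega_1$ is generically iterable gives well-foundedness of all iterations of length $\leq\omega_1$ directly (this is exactly what the present paper uses, without comment, in its lemma producing dense sets of $A$-iterable conditions). Where the gap really bites is $A$-iterability: your argument that the iterate $\tilde M$ sits over ``a $\Si$-premouse iterate $\tilde N$ of $N$'' rests on the false identification of generic iterates with strategy iterates. The actual content of Schindler's theorem is to show that along a \emph{generic} iteration $j$ the image of the internal definition of $A\cap M_p$ (via the capturing term $\tau$, the extender algebra of $M$, and the $\Si$-fragment coded in $N$'s predicate) still computes $A$ correctly; this requires the condensation and positionality clauses of capturing together with the second Woodin $\delta_1$, and it is precisely the step you defer ``essentially verbatim.'' As written, both halves of the hard direction are therefore asserted rather than proved, and the asserted mechanism would not work as stated.
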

In order to make use of  Theorem \ref{GOD}, we need to find capturing mice. 
One key idea is to capture by using self-justifying systems. 

We let $T_{0}$ denote the theory 
\begin{align*}
    &\ZF+\AD^{+}+\AD_{\R}+\\
    &``\text{Every set of reals is universally Baire''}+``\Gamma^{\infty} \text{ is productive''}. 
\end{align*}
Assuming $T_{0}$, we shall prove the following. 
\begin{enumerate}
    \item For every set of reals $A$, there is a boldface fine structural hybrid mouse $(\mathcal{M}, \Lambda)$ such that $(\mathcal{M}, \Lambda)$ strongly captures $A$. 
    \item For every set of reals $A$, letting $(\mathcal{M}, \Lambda)$ be as in (1), then for every $n\in\omega$, $M^{\sharp, \Lambda}_{n}$ operator is total and strongly universally Baire. 
\end{enumerate}

We shall make use of $\Gamma$-Woodins and shall follow the notations and the results in \cite{ADR}. To make this paper reasonably self-contained, let us recall definitions and results in \cite{ADR} briefly. For more details, see \cite{ADR}. 

We say a countable collection of sets of reals $\mathcal{A}\in\mathcal{P}(\R)^{\omega}$ is a \textit{self justifying system}, or briefly \textit{sjs}, if $\mathcal{A}$ is closed under taking complements and for every $A\in\mathcal{A}$ admits a scale $(\leq_{n} \colon n\in\omega)$ such that $\leq_{n}$ belongs to $\mathcal{A}$. 

Note that $\AD_{\R}$ implies that for every set of reals $A\subset\R$, there is a sjs $\mathcal{A}$ containing $A$. 

We say $\Gamma$ is a \textit{good pointclass} if 
\begin{enumerate}
    \item $\Gamma$ is closed under recursive substitution, $\forall^{\omega}$, $\exists^{\omega}$, and $\exists^{\R}$, 
    \item $\Gamma$ is $\omega$-parametrized and scaled. 
\end{enumerate}
Every good pointclass $\Gamma$ has its associated $C_{\Gamma}$-operator. For $x\in \R$, 
\[ C_{\Gamma}(x)=\{ y\in\R \mid \exists\xi<\omega_{1}\, \text{$y$ is $\Gamma(x, z)$ for all $z$ coding $\xi$} \}. \]
We extend $x\mapsto C_{\Gamma}(x)$ to countable transitive sets. For countable transitive $a$ and $x\in\R$, we say $x$ codes $a$ if $\varphi\colon (a, \in)\simeq (\omega, R)$ where $nRm$ if and only if $(n, m)\in x$. For $b\subset a$, let $b_{x}$ be the real $\varphi"b$. 

For countable transitive $a$, $C_{\Gamma}(a)$ denotes the set of $b\subset a$ such that $b_{x}\in C_{\Gamma}(x)$ for all $x$ coding $a$. 

We have a nice description of $C_{\Gamma}(a)$. 
\begin{thm}[Harrington--Kechris, {\cite{HaKe}}]
Assume $\AD$. Let $\Gamma$ be a good pointclass, and let $T$ be a tree of a $\Gamma$-scale on a $\Gamma$-universal set. Then for any countable transitive $a$, 
\[ C_{\Gamma}(a)=\mathcal{P}(a)\cap L(T\cup\{a\}, a). \]
\end{thm}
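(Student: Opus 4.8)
The plan is to prove the two inclusions separately, the nontrivial direction being $C_\Gamma(a) \subseteq \mathcal{P}(a) \cap L(T \cup \{a\}, a)$. First I would reduce to the case of reals: fix a real $z$ coding $a$, and observe that under the coding map $\varphi$, the set $C_\Gamma(a)$ corresponds (uniformly in $z$) to $C_\Gamma(z)$ once we quotient by the choice of code, so it suffices to understand $C_\Gamma(z)$ and then note that $T$ computes the translation between different codes of $a$. So the heart of the matter is the classical statement: for a good pointclass $\Gamma$ and $T$ a tree of a $\Gamma$-scale on a $\Gamma$-universal set, $C_\Gamma(z) = \mathbb{R} \cap L(T, z)$ (relativized), which is the Harrington–Kechris / Moschovakis analysis of the largest countable $\Gamma$-set and its ``$\Sigma$-envelope.''

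For the inclusion $\mathcal{P}(a) \cap L(T \cup \{a\}, a) \subseteq C_\Gamma(a)$: every $b \in L(T \cup \{a\}, a)$ with $b \subseteq a$ is, after coding, a real in $L(T, z)$, hence definable over some level $L_\xi(T, z)$ from $T$, $z$, and finitely many ordinals below $\xi$. Using $\AD$ (so that $\omega_1$ is inaccessible to reals, and in fact measurable) together with the fact that $T$ lives on a good pointclass, one shows that membership in $p[T]$ and the full first-order theory of $L_\xi(T,z)$ are computed in a $\Gamma$-fashion from any real coding a sufficiently large countable ordinal; a Löwenheim–Skolem / reflection argument pushes everything below $\omega_1$, so $b_x$ is $\Gamma(x, z)$ for all $x$ coding a large enough $\xi$, giving $b \in C_\Gamma(a)$.

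For the reverse inclusion $C_\Gamma(a) \subseteq \mathcal{P}(a) \cap L(T \cup \{a\}, a)$, I would argue contrapositively: if $b \subseteq a$ and $b \notin L(T \cup \{a\}, a)$, then (coding to a real $w = b_z$) we have $w \notin L(T, z)$, and I want to produce, for cofinally many countable $\xi$, a code $x$ of $\xi$ such that $w$ is not $\Gamma(x, z)$. The idea is that if $w$ were $\Gamma(x,z)$ for all $x$ coding all large $\xi$, then by the scale/tree representation $w$ would be captured inside $L(T,z)$ — one uses that $T$ reflects the $\Gamma$-definability (this is where ``$T$ is a tree of a $\Gamma$-scale on a $\Gamma$-universal set'' is essential: it lets $L(T,z)$ internally verify $\Gamma$ and $\check\Gamma$ statements about reals it contains) and a genericity/comparison argument to conclude $w \in L(T,z)$, a contradiction. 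Concretely, one shows the complement of $C_\Gamma(a)$ is, modulo coding, exactly the reals not in $L(T,z)$, by exhibiting for each such $w$ a ``diagonalizing'' $\Delta^1_2$-in-$T$ reason it escapes every $\Gamma(x,z)$ pointclass.

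\textbf{Main obstacle.} The hard part will be the $C_\Gamma(a) \subseteq L(T \cup \{a\}, a)$ direction, specifically establishing that $\Gamma$-definability from codes of arbitrarily large countable ordinals forces membership in $L(T, z)$. This requires the full strength of the Harrington–Kechris analysis: one must show $T$ is ``self-justifying enough'' that $L(T,z)$ correctly evaluates $\Gamma$ and $\check\Gamma$ on its own reals (via the scale property of $\Gamma$ and Moschovakis's coding lemma under $\AD$), and then run a reflection argument showing any real with the stated uniform $\Gamma$-definability property is already a member of $L(T,z)$. The subtlety is purely in bookkeeping the uniformity of the tree representations under change of code and in invoking $\AD$ at the right points (inaccessibility of $\omega_1$, coding lemma, the scale property); the structure of the argument is classical, but getting the pointclass-theoretic estimates exactly right is where the work lies. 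I would lean on \cite{HaKe} and the standard presentation (e.g.\ in Moschovakis or in Steel's ``derived model'' notes) for the details.
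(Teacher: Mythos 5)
The paper never proves this statement: it is quoted, with attribution, as a theorem of Harrington--Kechris and used as a black box in the analysis of $C_{\Gamma}(a)$, so there is no in-paper argument to compare yours against. Judged as a standalone proof, your text has a genuine gap: it is an outline whose two key steps are precisely the ones left unargued, and your closing appeal to ``lean on [HaKe] for the details'' is circular, since that reference \emph{is} the statement being proved.

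Concretely, for the hard inclusion $C_{\Gamma}(a)\subseteq\mathcal{P}(a)\cap L(T\cup\{a\},a)$ you say that a ``genericity/comparison argument'' and a ``diagonalizing $\Delta^1_2$-in-$T$ reason'' will capture the real $w=b_z$ inside $L(T,z)$, but you never exhibit the mechanism. The actual content of Harrington--Kechris is the generic-codes technique: one works over $L(T\cup\{a\},a)$ with the space of codes of a fixed countable ordinal $\xi$, uses the tree $T$ of the $\Gamma$-scale on a universal set (together with the leftmost-branch/scale analysis for $\check\Gamma$) so that the model can correctly evaluate the relevant $\Gamma(x,z)$ computations on sufficiently generic codes $x$, and then uses invariance under the choice of generic code (a category/homogeneity argument) to read off $b$ from $T$, $z$ and $\xi$ inside the model. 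None of this is present, and without it the contrapositive you describe does not get off the ground. The easier inclusion is also only asserted: that every real of $L(T,z)$ appears at a countable level, and that membership in $p[T]$ and the theories of levels $L_{\xi}(T,z)$ become $\Gamma(x,z)$ uniformly in any code $x$ of a large enough $\xi$, are exactly the points where $\AD$ (no uncountable well-ordered sets of reals, boundedness) and the scale property must be invoked, and your sketch gives no argument for either. The skeleton (reduce to reals via a code $z$ of $a$, split into two inclusions, identify the $C_{\Gamma}\subseteq L(T,\cdot)$ direction as the hard one) is correct, but as it stands the proposal restates the theorem's difficulty rather than resolving it.
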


\begin{defn}[{\cite[Definition 1.4]{ADR}}]
    We say $\Gamma$ is a \textit{very good pointclass}, or briefly \textit{vg-pointclass}, if there is a sjs $\mathcal{A}$, $\gamma<\Theta^{L(\mathcal{A}, \R)}$, a $\Si_{1}$-formula $\varphi$, and a real $x$ such that $L_{\gamma}(\mathcal{A}, \R)$ is the least initial segment of $L(\mathcal{A}, \R)$ that satisfies 
\[ \ZF-{\sf P}+``\Theta \text{ exists''}+``V=L_{\Theta^{+}}(C, \R) \text{ for some $C\subset\R$''} +\varphi(x)\]
and $\Gamma=(\Si^{2}_{1}(\mathcal{A}))^{L_{\gamma}(\mathcal{A}, \R)}$. We let $M_{\Gamma}$ denote $L_{\gamma}(\mathcal{A}, \R)$. 
\end{defn}
Note that every vg-pointclass is good by scale analysis (see \cite{Scale}). 

We say a transitive model $P$ of $\ZFC-${\sf Replacement} is a \textit{$\Gamma$-Woodin} if for some $\delta$, 
\begin{enumerate}
    \item $P\models``\delta$ is the unique Woodin cardinal'', 
    \item $P=C_{\Gamma}^{\omega}(P)(:=\bigcup_{k<\omega}C^{k}_{\Gamma}(P))$, and 
    \item for every $P$-inaccessible cardinal $\eta<\delta$, 
    \[ C_{\Gamma}(V_{\eta}^{P})\models ``\eta \text{ is not a Woodin cardinal''. }\]
\end{enumerate}
Let $\Gamma$ be a vg-pointclass as witnessed by $\mathcal{A}$ and $M_{\Gamma}$. Let $B\in M_{\Gamma}\cap \mathcal{P}(\R)$ be $\mathrm{OD}^{M_{\Gamma}}(\mathcal{A})$, and let $a\in\HC$ be transitive. Define the canonical term relation $\tau^{a}_{B}$ consisting of pairs $(p, \si)$ such that 
\begin{enumerate}
    \item $p\in\Col(\omega, a)$, 
    \item $\si\in C_{\Gamma}(a)$ is a standard $\Col(\omega, a)$-name for a real, and 
    \item for a co-meager many $g\subset\Col(\omega, a)$, if $p\in g$, then $\si^{g}\in B$. 
\end{enumerate}
Note that $\tau^{a}_{B}\in C_{\Gamma}(C_{\Gamma}(a))$. We iterate this process. 
For $k=0$, let $\tau^{a}_{B, 0}=\tau^{a}_{B}$, and let $\tau^{a}_{B, k}=\tau^{C^{k}_{\Gamma}(a)}_{B, 0}$. 

For a $\Gamma$-Woodin $P$ and for $B\in \mathrm{OD}^{M_{\Gamma}}(\mathcal{A})$, we let $\tau^{P}_{B, k}=\tau^{V^{P}_{\delta^{P}}}_{B, k}$. 
\begin{thm}[{\cite[Theorem 1.7]{ADR}}]
    Assume $\AD^{+}$ and suppose that $\Gamma$ is a vg-pointclass. Let $\mathcal{A}$ and $M_{\Gamma}$ witness that $\Gamma$ is very good. Let $A\in\mathrm{OD}(\mathcal{A})^{M_{\Gamma}}$. 
    Then there is a pair $(P, \Si)$ and a $\Gamma$-condensing sequence $\vec{B}$\footnote{See \cite{ADR}. } such that 
    \begin{enumerate}
        \item $P$ is $\Gamma$-Woodin, 
        \item $\Si$ is a $\Gamma$-fullness preserving $(\omega_{1}, \omega_{1})$-iteration strategy for $P$, i.e., every $\Si$-iterate of $P$ is again $\Gamma$-Woodin,  
        \item for each $i\in\omega$, $\Si$ respects $B_{i}$, i.e., if whenever $i\colon P\to Q$ is according to $\Si$, then $i(\tau^{P}_{B_{i}, k})=\tau^{Q}_{B_{i}, k}$ for every $k$, 
        \item $\Si$ respects $A$, 
        \item for every $\Si$-iterate $Q$ of $P$, for every $i\in\omega$, and for every $Q$-generic $g\subset\Col(\omega, \delta^{Q})$, $(\tau_{B_{i}}^{Q})^{g}=Q[g]\cap B_{i}$, 
        \item for any tree $\mathcal{T}\in\dom(\Si)$, $\Si(\mathcal{T})=b$ if and only if either 
        \begin{enumerate}
            \item[(a)] $C_{\Gamma}(\mathcal{M}(\mathcal{T}))\models``\delta(\mathcal{T}) \text{ is not a Woodin cardinal''}$ and $b$ is the unique well-founded cofinal branch $c$ of $\mathcal{T}$ such that $C_{\Gamma}(\mathcal{M}(\mathcal{T}))\in \mathcal{M}^{\mathcal{T}}_{c}$, or 
            \item[(b)] $C_{\Gamma}(\mathcal{M}(\mathcal{T}))\models``\delta(\mathcal{T}) \text{ is a Woodin cardinal''}$ and $b$ is the unique well-founded cofinal branch $c$ of $\mathcal{T}$ such that letting $Q=C_{\gamma}^{\omega}(\mathcal{M}(\mathcal{T}))$, $\mathcal{M}^{\mathcal{T}}_{c}=Q$ and for every $i\in\omega$, $\pi^{\mathcal{T}}_{c}(\tau^{P}_{B_{i}})=\tau^{Q}_{B_{i}}$. 
        \end{enumerate}
    \end{enumerate}
\end{thm}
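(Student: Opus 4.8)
The plan is to build $(P,\Si,\vec B)$ in three stages --- first the combinatorial data $\vec B$, then the coarse model $P$, then the strategy $\Si$ --- and finally to verify the six clauses. First I would fix a tree $T$ of a $\Gamma$-scale on a $\Gamma$-universal set, which exists because $\Gamma$ is scaled; by the Harrington--Kechris theorem quoted above, $C_{\Gamma}(a)=\mathcal{P}(a)\cap L(T\cup\{a\},a)$ for every countable transitive $a$, and this description is exactly what will make $C_{\Gamma}$ commute with iteration embeddings later on. Next I would assemble the $\Gamma$-condensing sequence $\vec B=(B_{i}\colon i<\omega)$ by working inside $M_{\Gamma}$: put into $\vec B$ the components of the sjs $\mathcal{A}$, the target set $A$ together with a scale on it whose norm relations lie in $\mathcal{A}$, and sufficiently many further $\mathrm{OD}^{M_{\Gamma}}(\mathcal{A})$ sets, closing off until the resulting countable sequence is self-justifying and the term relations $\tau^{a}_{B_{i},k}$ have the condensation behaviour demanded of a $\Gamma$-condensing sequence in the sense of \cite{ADR}. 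That $\mathcal{A}$ is a sjs and $A$ is $\mathrm{OD}(\mathcal{A})^{M_{\Gamma}}$ is precisely what lets this closure process terminate at $\omega$.

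For the model itself I would use that $M_{\Gamma}\models\ZF+\AD^{+}+``\Theta$ exists'' $+$ ``$V=L_{\Theta^{+}}(C,\R)$ for some $C\subseteq\R$'': by Woodin's derived model analysis such a model is internally the derived model of a coarse premouse carrying a Woodin cardinal --- equivalently one runs a $K^{c}$-type construction relative to the operator $x\mapsto C_{\Gamma}(x)$ --- and then passes to the \emph{least} coarse transitive $P\models\ZFC-\mathsf{Replacement}$ that has a unique Woodin $\delta^{P}$ and satisfies $P=C_{\Gamma}^{\omega}(P)$. Minimality delivers clause (3): were some $P$-inaccessible $\eta<\delta^{P}$ to satisfy $C_{\Gamma}(V_{\eta}^{P})\models``\eta$ is Woodin'', one could cut down to a strictly smaller $C_{\Gamma}^{\omega}$-closed model still carrying a Woodin, contradicting minimality. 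The strategy $\Si$ is then \emph{defined} by clause (6): for $\mathcal{T}\in\dom(\Si)$ of limit length, $\Si(\mathcal{T})$ is the branch $b$ singled out by the $Q$-structure $C_{\Gamma}(\mathcal{M}(\mathcal{T}))$ when $\delta(\mathcal{T})$ is not Woodin there, and by the requirement $\mathcal{M}^{\mathcal{T}}_{b}=C_{\Gamma}^{\omega}(\mathcal{M}(\mathcal{T}))$ together with $\pi^{\mathcal{T}}_{b}$ fixing every $\tau^{P}_{B_{i}}$ when $\delta(\mathcal{T})$ is Woodin there.

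What then remains is to verify: (i) \emph{uniqueness} of $b$ --- in the non-Woodin case by Steel's $Q$-structure uniqueness argument (two branches would furnish incompatible $Q$-structures over $\mathcal{M}(\mathcal{T})$), and in the Woodin case because the term relations $\tau^{P}_{B_{i}}$ pin the branch down: by the condensing property of $\vec B$ no two cofinal branches can both realise $C_{\Gamma}^{\omega}(\mathcal{M}(\mathcal{T}))$ while moving the $\tau^{P}_{B_{i}}$ correctly; (ii) \emph{fullness preservation}, i.e.\ that every $\Si$-iterate $Q$ of $P$ is again $\Gamma$-Woodin --- here one uses that $C_{\Gamma}$ commutes with the iteration maps (via the Harrington--Kechris description and the universality of $T$), so $C_{\Gamma}^{\omega}$-closure, uniqueness of the Woodin, and the minimality clause all lift along $\pi^{\mathcal{T}}_{b}$; (iii) that $\Si$ respects each $B_{i}$ and respects $A$ --- immediate from the definition in the Woodin case and from $C_{\Gamma}(\mathcal{M}(\mathcal{T}))\in\mathcal{M}^{\mathcal{T}}_{b}$ in the non-Woodin case; and (iv) the term-capturing identity $(\tau^{Q}_{B_{i}})^{g}=Q[g]\cap B_{i}$ for $Q$-generic $g\subseteq\Col(\omega,\delta^{Q})$ --- this follows since $\delta^{Q}$ is Woodin in $Q$ and, $Q$ being $\Gamma$-Woodin, $C_{\Gamma}$ restricted to $Q$ correctly computes $B_{i}$ on $Q$-generic reals.

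The main obstacle is \emph{branch existence}: that $\Si$ is total, i.e.\ that for every $\mathcal{T}\in\dom(\Si)$ there actually \emph{is} a cofinal well-founded branch meeting the $Q$-structure demand. Unlike the uniqueness half this is not routine --- it is exactly where $\AD^{+}$ enters, via a ``no bad sequence''/reflection argument (reflect a hypothetical failure to a countable hull and contradict determinacy of the ambient pointclass) combined with a Neeman-style genericity iteration showing the wanted branch is already captured inside $C_{\Gamma}$ of the common part of $\mathcal{T}$. A second delicate point, to be carried out in tandem, is to secure $C_{\Gamma}^{\omega}$-closure of $P$ \emph{simultaneously} with the minimality required for clause (3); checking that these two demands are compatible is precisely the job of the condensing sequence $\vec B$.
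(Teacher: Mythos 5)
First, note that the paper does not prove this statement at all: it is quoted as Theorem 1.7 of \cite{ADR} and used as a black box, so there is no internal proof to compare against; the relevant benchmark is Sargsyan's argument in \cite{ADR}. Measured against that, your outline captures the right architecture --- a coarse model closed under $C_{\Gamma}$, a strategy guided by $Q$-structures of the form $C_{\Gamma}(\mathcal{M}(\mathcal{T}))$ in the non-Woodin case and by the term relations $\tau^{P}_{B_{i}}$ in the Woodin case, and fullness preservation via the Harrington--Kechris description of $C_{\Gamma}$ --- but the steps that carry the actual weight are either missing or would not work as stated.

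Concretely: (1) the existence of $P$ together with an $(\omega_{1},\omega_{1})$-iteration strategy is not obtained by ``passing to the least coarse transitive model'' arising from a $K^{c}$-style construction inside $M_{\Gamma}$; iterability is the crux and cannot be extracted from minimality. In \cite{ADR} it comes from Woodin's theorem that under $\AD^{+}$, when there is a good pointclass properly beyond $\Gamma$ (this is where very-goodness and the ambient hypotheses enter), every real belongs to a coarse $\Gamma$-Woodin admitting an appropriate strategy. Your minimality argument for clause (3) is also shaky: cutting the model at an inaccessible $\eta$ need not simultaneously preserve $\ZFC$ minus Replacement, $C^{\omega}_{\Gamma}$-closure, and the presence of a genuine Woodin cardinal. (2) Defining $\Si$ by clause (6) is only a definition; the content of the theorem is that this prescription is total --- branch existence in both cases and, in the Woodin case, existence of a branch realizing $C^{\omega}_{\Gamma}(\mathcal{M}(\mathcal{T}))$ while moving every $\tau^{P}_{B_{i}}$ correctly. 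That is exactly what the self-justifying system buys: the condensation of the term relations $\tau^{a}_{B_{i},k}$ (the ``$\Gamma$-condensing'' property) is established through the sjs-guided term-capturing analysis, not by the generic ``reflection plus genericity iteration'' gesture you offer, which as written is an appeal to an argument rather than an argument. (3) Clause (5) is likewise not immediate: it requires generic interpretability of each $B_{i}$ by the moved terms over arbitrary iterates, which again uses $\Gamma$-fullness of the iterate and the defining co-meager/forcing property of $\tau^{a}_{B,k}$, verified along the iteration. So your sketch is a reasonable roadmap, honestly flagged as such, but it has genuine gaps at precisely the points where the cited theorem is hard; filling them would amount to reproducing the proof in \cite{ADR}, which is why the paper cites it rather than proving it.
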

For the definition of $\Gamma$-condensing sequence, see \cite{ADR}. The point is that the sjs $\mathcal{A}$ is coded into a set appearing in $\vec{B}$. We call the pair $(P, \Si)$ above is a \textit{$\Gamma$-excellent pair}. 

Now assume $T_{0}$ for the rest of this section. We are given any set of reals $A\subset\R$. 

Let $\Gamma$ be a very good pointclass as witnessed by $\mathcal{A}$ and $M_{\Gamma}$ such that $A\in\mathcal{A}$. We may find such a $\Gamma$ by $\AD_{\R}$. 
Let $(P, \Si)$ be a $\Gamma$-excellent pair. 
Let $\vec{B}$ witness the $\Gamma$-excellence of $(P, \Si)$ and let $u\in\R$ code the sequence $(\tau^{P}_{B_{i}}\colon i<\omega)$. 

Then we have $M^{\#, \Si}_{n}(x)$ exists for every $n\in\omega$ and for every swo $x\in\HC$ (see \cite[Theorem 3.1]{ADR}). 

Let $\mathcal{M}=M^{\#, \Si}_{1}(u)$. Let $\Lambda$ be the $(\omega_{1}, \omega_{1})$-iteration strategy for $\mathcal{M}$. Then we have 

\begin{thm}[Sargsyan, {\cite[Theorem 4.1]{ADR}}]\label{capture}
Let $\delta$ be the Woodin cardinal of $\mathcal{M}$. Then there are trees $(T, S)\in\mathcal{M}$ on $\omega\times(\delta^{+})^{\mathcal{M}}$ such that $\mathcal{M}\models$``$(T, S)$ are $\delta$-complementing'' and whenever $i\colon\mathcal{M}\to\mathcal{N}$ is an iteration according to $\Lambda$ and $g\subset \Col(\omega, i(\delta))$ is $\mathcal{N}$-generic, $\mathcal{N}[g]\cap p[i(T)]=\mathrm{Code}(\Si)\cap \mathcal{N}[g]$. 
\end{thm}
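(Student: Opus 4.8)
The plan is to build the pair $(T,S)$ inside $\mathcal{M}$ so that $p[T]$ --- in generic extensions of $\Lambda$-iterates of $\mathcal{M}$ --- directly implements the branch-selection recipe for $\Sigma$ given by clause (6) of the $\Gamma$-excellence theorem. The virtue of that clause is that it characterizes $\Sigma(\mathcal{T})$ using only the operator $C_\Gamma$ and the term relations $\tau^P_{B_i}$, with no reference to $\Sigma$ itself --- and both of these are available to $\mathcal{M}$: the second because $u$ codes $(\tau^P_{B_i}\colon i<\omega)$, and the first by the structural fact below.

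The one nontrivial input, which I expect to be the main obstacle, is this: because $\mathcal{M}=M^{\#,\Sigma}_1(u)$ is a $\Sigma$-premouse over $u$ with Woodin cardinal $\delta$ and $u$ codes the term relations, $\mathcal{M}$ --- and every $\Lambda$-iterate $\mathcal{N}$ of it --- is closed below its Woodin cardinal under $C_\Gamma$ applied to the countable transitive sets $\mathcal{M}(\mathcal{T}\res\eta)$ arising from putative iteration trees $\mathcal{T}$ on $P$; equivalently, the $Q$-structures witnessing iterability of such $\mathcal{M}(\mathcal{T}\res\eta)$ as $\Sigma$-premice coincide with the $C_\Gamma$-computations, and these occur as initial segments of $\mathcal{M}$ (resp.\ $\mathcal{N}$) of size $<\delta$. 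This is where the precise choice of $\mathcal{M}$, the choice of $u$, and the $\Gamma$-excellence of $(P,\Sigma)$ all enter; it rests on the fine-structural theory of $\Sigma$-mice together with the identification of $C_\Gamma$ with the relevant $Q$-structure operator.

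Granting this, I would define $T$ in $\mathcal{M}$ --- using that $\delta$ is Woodin, so that the trees can be taken on $\omega\times(\delta^+)^{\mathcal{M}}$ --- as the tree searching, for a real $x$, for: (i) a parsing of $x$ as a code for a pair $(\mathcal{T},b)$ with $\mathcal{T}$ a putative iteration tree on $P$ and $b$ a cofinal branch; (ii) for each limit $\eta\le\lh(\mathcal{T})$, the computation $C_\Gamma(\mathcal{M}(\mathcal{T}\res\eta))$, together with a check that the branch chosen by $\mathcal{T}$ at $\eta$ --- and, at $\eta=\lh(\mathcal{T})$, the branch $b$ --- is the unique wellfounded cofinal branch meeting clause (6)(a) or (6)(b), the latter by comparing the images of the $\tau^P_{B_i}$ read off from $u$; and (iii) ordinal labels below $(\delta^+)^{\mathcal{M}}$ certifying wellfoundedness of all the $C_\Gamma$-computations and of $\mathcal{M}^{\mathcal{T}}_b$. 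I would let $S$ search for $x$ together with a witness that $x$ is \emph{not} of this form: either $x$ fails to parse as such a pair (a $\Sigma^1_1$-in-$u$ condition, hence tree-representable), or there is a least limit stage at which $\mathcal{T}$ is still by the recipe so far but the branch chosen by $\mathcal{T}$ there differs from the (exhibited, wellfoundedness-certified) clause-(6) branch, or the analogous failure at the top. That $\mathcal{M}\models$``$(T,S)$ are $\delta$-complementing'' then follows: $p[T]\cap p[S]=\emptyset$ is immediate from the uniqueness clause in (6), and $p[T]\cup p[S]=\R$ in $\mathcal{M}^{\Col(\omega,\delta)}$ holds because, by the closure of $\mathcal{M}$ under $C_\Gamma$ below $\delta$, $\mathcal{M}^{\Col(\omega,\delta)}$ can itself run the recipe on any real $x$ of the right form: at each limit stage where the tree is by the recipe so far, the clause-(6) branch exists and lies in $\mathcal{M}^{\Col(\omega,\delta)}$, being read off from the $C_\Gamma$-computation there, so the recipe either confirms $(\mathcal{T},b)$ --- putting $x\in p[T]$ --- or detects the first discrepancy --- putting $x\in p[S]$.

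Finally, for the capturing property, let $i\colon\mathcal{M}\to\mathcal{N}$ be according to $\Lambda$ and $g\subset\Col(\omega,i(\delta))$ generic over $\mathcal{N}$. Since $\Lambda$ moves $\Sigma$ correctly, $\mathcal{N}$ is again a $\Sigma$-premouse over $u$, so the structural input applies to $\mathcal{N}$; and since the construction is carried out over $u$, $i$ fixes $u$ and $i(T),i(S)$ are exactly the trees obtained in $\mathcal{N}$ by the same definition, now with parameter $i(\delta)$. For $x\in\R^{\mathcal{N}[g]}$: if $x$ codes $(\mathcal{T},\Sigma(\mathcal{T}))$ with $\mathcal{T}$ according to $\Sigma$, then by the $\Gamma$-excellence theorem clause (6) holds at every stage of $\mathcal{T}$, the needed $C_\Gamma$-data lie in $\mathcal{N}[g]$, so the recipe succeeds and $x\in p[i(T)]^{\mathcal{N}[g]}$; conversely, if $x\in p[i(T)]^{\mathcal{N}[g]}$ then $x$ codes some $(\mathcal{T},b)$ for which clause (6) is genuinely met at each stage --- the wellfoundedness is real since the branches lie in $\mathcal{N}[g]\subseteq V$ --- and an induction on $\lh(\mathcal{T})$, using the uniqueness part of clause (6), shows $\mathcal{T}$ is according to $\Sigma$ and $b=\Sigma(\mathcal{T})$, i.e.\ $x\in\mathrm{Code}(\Sigma)$. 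Hence $\mathcal{N}[g]\cap p[i(T)]=\mathrm{Code}(\Sigma)\cap\mathcal{N}[g]$, as required. Besides the structural input flagged in the second paragraph, the remaining point needing care is that all objects the trees search for --- the $C_\Gamma$-computations and their wellfoundedness certificates --- genuinely fit inside $\mathcal{M}$ on $\omega\times(\delta^+)^{\mathcal{M}}$, which follows from their having size $<\delta$, hence rank $<(\delta^+)^{\mathcal{M}}$, by the closure of $\mathcal{M}$ under $C_\Gamma$ and $\delta$'s inaccessibility in $\mathcal{M}$.
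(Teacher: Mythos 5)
You should note first that the paper does not prove this statement at all: it is quoted verbatim from Sargsyan (cited as Theorem 4.1 of the $\Gamma$-Woodin paper), so the comparison has to be with that argument and the standard capturing technology behind it. Measured against that, your outline has a genuine gap at its core. You ask the tree $T$ (and its images $i(T)$) to recognize, for an arbitrary real $x$ of $\mathcal{N}[g]$ coding a putative tree $\mathcal{T}$ on $P$, that an exhibited countable structure is exactly $C_\Gamma(\mathcal{M}(\mathcal{T}\res\eta))$. A tree on $\omega\times(\delta^+)^{\mathcal{M}}$ can only verify conditions that are closed relative to the data written along its branches, and ``$Y=C_\Gamma(a)$'' is not such a condition: the inclusion $Y\subseteq C_\Gamma(a)$ needs a correctness certificate (iterability of the corresponding $\Sigma$-structure over $a$, or membership in $L(T_\Gamma\cup\{a\},a)$ for the tree $T_\Gamma$ of a $\Gamma$-scale, which is not an element of $\mathcal{M}$), while the inclusion $C_\Gamma(a)\subseteq Y$ is a fullness statement that quantifies over all of $C_\Gamma(a)$ and so sits on the wrong side to be witnessed along a branch of $T$ (and symmetrically for $S$); ordinal labels certify wellfoundedness, not these two facts. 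So the construction is circular: the ability of a tree in $\mathcal{M}$ to certify $C_\Gamma$-computations of arbitrary new countable sets is essentially the capturing content the theorem is supposed to produce. Relatedly, your ``structural input'' that the relevant computations ``occur as initial segments of $\mathcal{M}$ (resp.\ $\mathcal{N}$)'' cannot be literally true for trees coded by reals that are new in $\mathcal{N}[g]$: those $Q$-structure-like objects sit over new sets and are not initial segments of $\mathcal{N}$; they must be produced by a translation procedure inside $\mathcal{N}[g]$.

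The actual argument runs differently and supplies exactly the two missing components. First comes term capturing: one shows that for every $\Lambda$-iterate $\mathcal{N}$ and $\Col(\omega,i(\delta))$-generic $g$, the set $\mathrm{Code}(\Sigma)\cap\mathcal{N}[g]$ is computed by a term in $\mathcal{N}^{\Col(\omega,i(\delta))}$; this uses clause (6) of the $\Gamma$-excellence theorem, the fact that $\Lambda$ moves the internal strategy/term-relation data (coded into $u$) correctly, and the identification of $C_\Gamma(a)$, for new countable $a\in\mathcal{N}[g]$, with levels of the $\Sigma$-mouse hierarchy over $a$ computed inside $\mathcal{N}[g]$ from the extender sequence of $\mathcal{N}$ (an $S$-construction/translation argument), not with initial segments of $\mathcal{N}$ itself. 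Second, one converts the term into $\delta$-complementing trees on $\omega\times(\delta^+)^{\mathcal{M}}$ by the standard construction at a Woodin cardinal: the trees search for countable hulls of $\mathcal{M}||(\delta^+)^{\mathcal{M}}$ together with generics over the hulls realizing the term, and correctness of $p[i(T)]$ in $\mathcal{N}[g]$ is proved via genericity iterations and homogeneity of the collapse, rather than by hard-coding the clause-(6) recipe into the tree. As written, your step ``the tree checks that the exhibited computation is $C_\Gamma(\mathcal{M}(\mathcal{T}\res\eta))$'' is the missing idea, not a routine verification, and without it both the $\delta$-complementing claim and the equality $\mathcal{N}[g]\cap p[i(T)]=\mathrm{Code}(\Sigma)\cap\mathcal{N}[g]$ are unsupported.
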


We claim that $\mathcal{M}$ captures $A$. 
Let $(T, S)$ be as in Theorem \ref{capture}. 
Let $\delta$ be the Woodin cardinal of $\mathcal{M}$. 
We define a term $\tau\in \mathcal{M}^{\Col(\omega, \delta)}$ as follows. 
$(p, \si)\in\tau$ if 
\begin{enumerate}
    \item $p\in\Col(\omega, \delta)$, 
    \item $\si\in \mathcal{M}^{\Col(\omega, \delta)}$ is a standard name for a real, and 
    \item $p\dststile{\Col(\omega, \delta)}{\mathcal{M}}$ ``there is an iteration tree $\mathcal{U}$ on $P$ according to $p[\check{T}]$ with the last model $Q$ such that $\dot{\si}$ is $Q$-generic over the extender algebra at $\pi^{\mathcal{Q}}(\check{\delta^{P}})$ and 
    \[Q[\dot{\si}]\models \dststile{\Col(\omega,  \delta^{Q})}{} \dot{\si} \in \pi^{\mathcal{U}}(\check{\tau^{P}_{A}})\text{''}. \]
\end{enumerate}
Then it is clear that $(\mathcal{M}, \delta, \tau, \Lambda)$ captures $A$\footnote{The good properties of $\Lambda$ follow from the branch uniqueness.}.  
Moreover, by Lemma \ref{abs}, $(\mathcal{M}, \delta, \tau, \Lambda)$ strongly captures $A$. 

Then the argument of \cite[Theorem 3.1]{ADR} shows for every $n\in\omega$ and for every swo $x\in\HC$, $M^{\#, \Lambda}_{n}(x)$ exists. By Lemma \ref{lem-stub}, the operator $x\mapsto M^{\#, \Lambda}_{n}(x)$ is strongly universally Baire. Here, since $\rho_{\omega}(M_{n}^{\#, \Lambda}(x))=\omega$ for each $x$, we may  fix a canonical coding of each $M_{n}^{\#, \Lambda}(x)$ into a real.  

Hence we obtained that 
\begin{thm}\label{capture2}
    Assume $T_{0}$. Then for every $A$, there is a pair $(M, \Si)$ such that 
    \begin{enumerate}
        \item $M$ is a boldface fine structural hybrid premouse, 
        \item $(M, \Si)$ strongly captures $A$, and 
        \item for every $n\in\omega$, the operator $x\mapsto M^{\#, \Si}_{n}(x)$ is total and strongly universally Baire. 
    \end{enumerate}
\end{thm}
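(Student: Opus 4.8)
The plan is to assemble into one statement the construction carried out in the discussion above. Fix $A \subseteq \R$. Since $T_0$ includes $\AD_\R$, there is a self-justifying system $\mathcal{A}$ with $A \in \mathcal{A}$, and then, by scale analysis under $\AD^+$, a very good pointclass $\Gamma$ witnessed by $\mathcal{A}$ and $M_\Gamma = L_\gamma(\mathcal{A}, \R)$, arranged so that $A \in \mathrm{OD}^{M_\Gamma}(\mathcal{A})$. Applying \cite[Theorem 1.7]{ADR} gives a $\Gamma$-excellent pair $(P, \Si)$ together with a $\Gamma$-condensing sequence $\vec{B}$ into which $\mathcal{A}$ is coded, with $\Si$ respecting each $B_i$ and respecting $A$; let $u \in \R$ code $(\tau^P_{B_i} : i < \omega)$, so that $\Si$-mice over $u$ have access to $\mathcal{A}$. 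By \cite[Theorem 3.1]{ADR}, $M^{\#,\Si}_n(x)$ exists for every $n < \omega$ and every swo $x \in \HC$. Set $M = \mathcal{M} = M^{\#,\Si}_1(u)$ with $\Lambda$ its unique $(\omega_1,\omega_1)$-iteration strategy. As a $\Si$-mouse over the real $u$, $M$ is a boldface fine structural hybrid premouse, which is clause~(1).

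For clause~(2), let $\delta$ be the Woodin of $\mathcal{M}$ and apply Theorem~\ref{capture} (Sargsyan) to get $\delta$-complementing trees $(T, S) \in \mathcal{M}$ such that, along any $\Lambda$-iterate $i \colon \mathcal{M} \to \mathcal{N}$ and any $\mathcal{N}$-generic $g$ for the collapse of $i(\delta)$, $p[i(T)]$ computes $\mathrm{Code}(\Si)$ in $\mathcal{N}[g]$. Using $T$ to drive genericity iterations of $P$ inside iterates of $\mathcal{M}$, define $\tau \in \mathcal{M}^{\Col(\omega,\delta)}$ by the displayed clause above: $(p,\si) \in \tau$ iff $p$ forces over $\mathcal{M}$ that some iteration tree $\mathcal{U}$ on $P$ according to $p[\check T]$ reaches a model $Q$ over which $\dot\si$ is extender-algebra generic and $Q[\dot\si] \models \dot\si \in \pi^{\mathcal U}(\tau^P_A)$. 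That $(\mathcal{M}, \delta, \tau, \Lambda)$ captures $A$ then follows from the fact that $\Si$ respects $A$ and that $\Si$-iterates $Q$ satisfy $(\tau^Q_A)^g = Q[g] \cap A$ (clauses~(4) and~(5) of \cite[Theorem 1.7]{ADR}), together with the homogeneity of the L\'evy collapse; the required condensation and positionality of $\Lambda$ come from branch uniqueness for $\mathcal{M}$. To pass from ``captures'' to ``strongly captures'' one applies Lemma~\ref{abs}: all the relevant assertions are projective in the universally Baire set $\mathrm{Code}(\Si)$ and hence persist to every generic extension, which yields the extensions $\tilde\Si$ of $\Lambda$ to $(\kappa^+, \kappa^{+}+1)$-iteration strategies with hull condensation, branch condensation, and positionality, for all $\kappa \geq \aleph_1$.

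For clause~(3), re-running the proof of \cite[Theorem 3.1]{ADR} with $\Lambda$ in place of $\Si$ shows that $M^{\#,\Lambda}_n(x)$ exists for every $n < \omega$ and every swo $x \in \HC$. Since $\rho_\omega(M^{\#,\Lambda}_n(x)) = \omega$, each such mouse admits a canonical real code, so there is a function $f_n \colon \R \to \R$ sending a code of a swo set $z$ to the canonical code of $M^{\#,\Lambda}_n(z)$; under $T_0$ every set of reals is universally Baire, so $f_n$ is universally Baire, and it clearly satisfies the decode and $\simeq$-invariance conditions of Lemma~\ref{lem-stub}. Hence, by Lemma~\ref{lem-stub}, $x \mapsto M^{\#,\Lambda}_n(x)$ is total and strongly universally Baire, which is clause~(3).

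The step I expect to be most delicate is the verification in clause~(2) that $\tau$ captures $A$ \emph{uniformly} along all $\Lambda$-iterates of $\mathcal{M}$: one must check that the genericity iteration of $P$ driven by $p[T]$ inside an iterate $\mathcal{N}$ of $\mathcal{M}$ actually terminates at a $\Si$-iterate $Q$ of $P$ at which $\tau^Q_A$ is correct, and that the resulting value is independent of the generic used to run the iteration. This requires combining ``$\Si$ respects $A$'' and the term-capturing clause of \cite[Theorem 1.7]{ADR} with the identification of $\mathrm{Code}(\Si)$ by $p[i(T)]$ from Theorem~\ref{capture}, while keeping track of how the iteration maps on $\mathcal{M}$ act on both $T$ and $\tau^P_A$. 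Once these pieces are in place, the remaining checks are routine.
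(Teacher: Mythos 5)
Your proposal is correct and follows essentially the same route as the paper: the same appeal to a $\Gamma$-excellent pair $(P,\Si)$ from \cite[Theorem 1.7]{ADR}, the same choice $M=M^{\#,\Si}_{1}(u)$ with strategy $\Lambda$, the same term $\tau$ built from the trees of Theorem~\ref{capture}, the same use of Lemma~\ref{abs} to upgrade capturing to strong capturing, and the same combination of \cite[Theorem 3.1]{ADR} with Lemma~\ref{lem-stub} for clause~(3). Your closing remarks on the delicate uniformity check in clause~(2) merely flesh out what the paper dismisses as ``clear,'' so no further changes are needed.
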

\section{\texorpdfstring{${\sf MM}^{*, ++}_{\mathfrak{c}}$}{Lg} in \texorpdfstring{$\mathbb{P}_{\max}$}{Lg} extensions}
In this section, we prove the first main theorem.  
\begin{thm}\label{mainthm2}
Suppose $\Gamma\subset\mathcal{P}(\R)$ is a boldface pointclass and $F$ is a class predicate such that $L^{F}(\Gamma, \R)$ satisfies the following: 
\begin{enumerate}
    \item $L^{F}(\Gamma, \R)\cap \mathcal{P}(\R)=\Gamma$, 
    \item $\ZF+\AD^{+}+\AD_{\R}+``\Theta$ is regular'', 
    \item Every set of reals is universally Baire, 
    \item $\Gamma^{\infty}(=\mathcal{P}(\R))$ is productive. 
\end{enumerate}
    Suppose that $G\subset\IP_{\max}$ is $L^{F}(\Gamma, \R)$-generic. Suppose 
    \[H\subset\mathrm{Add}(\omega_{3}, 1)^{L^{F}(\Gamma, \R)}\]
    is $L^{F}(\Gamma, \R)[G]$-generic. Then 
    \[ L^{F}(\Gamma, \R)[G][H]\models \ZFC+\MM^{*, ++}_{\mathfrak{c}}. \]
\end{thm}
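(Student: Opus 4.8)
The proof has two parts: that $\ZFC$ holds in $W := L^{F}(\Gamma, \R)[G][H]$, and that $\MM^{*, ++}_{\mathfrak{c}}$ does.

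\textbf{$\ZFC$ and the reduction.} The model $L^{F}(\Gamma, \R)[G]$ satisfies $\ZFC$ by Woodin's $\IP_{\max}$ analysis over models of $\AD_{\R}$ plus ``$\Theta$ is regular'' (see \cite[Theorem 9.39]{Wo}), which moreover yields $\mathfrak{c} = \aleph_{2}$, $\omega_{3} = \Theta^{L^{F}(\Gamma, \R)}$, the saturation of $\NS_{\omega_{1}}$, and the correct computation of $\mathcal{P}(\omega_{1})$. Forcing further with the essentially $<\omega_{3}$-closed poset $\mathrm{Add}(\omega_{3}, 1)^{L^{F}(\Gamma, \R)}$ preserves $\ZFC$, adds no reals, and preserves all of the above; the generic subset of $\omega_{3} = \Theta^{L^{F}(\Gamma, \R)}$ it adds is used as a bookkeeping device that lets one treat structures $\mathcal{M}$ of the full size $\mathfrak{c}$ rather than only of size $\aleph_{1}$, by filtering $\omega_{3}$. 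By the definition of $\MM^{*, ++}_{\mathfrak{c}}$ it then suffices to fix, in $W$, a transitive $\mathcal{M} = (M, \in, \vec{R})$ with $|M| \leq \aleph_{2}$ and $\vec{R}$ an $\aleph_{1}$-list of relations, together with a $\Sigma_{1}$ formula $\varphi$ in $\mathcal{L}_{\dot{\in}, \dot{I}_{\NS}}$ such that $\varphi(\mathcal{M})$ is honestly consistent in $W$, and to derive $\Psi(\mathcal{M}, \varphi)$ in $W$. Since $\mathcal{M}$ has size $\aleph_{2}$ inside the $\IP_{\max}$-extension $L^{F}(\Gamma, \R)[G]$, it is the realization $\tau^{G}$ of a $\IP_{\max}$-name $\tau$, and such a $\tau$ is coded by a set of reals $A_{\tau} \in L^{F}(\Gamma, \R)$.

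\textbf{Locating the honest witness inside the mouse machinery.} Hypotheses (2)--(4) say exactly that $L^{F}(\Gamma, \R) \models T_{0}$ (with ``$\Theta$ regular'' as a bonus), so Theorem~\ref{capture2} applies inside $L^{F}(\Gamma, \R)$: every set of reals $A$ has a pair $(M_{A}, \Si_{A})$, with $M_{A}$ a boldface fine structural hybrid premouse, $(M_{A}, \Si_{A})$ strongly capturing $A$, and every operator $x \mapsto M^{\#, \Si_{A}}_{n}(x)$ total and strongly universally Baire; by Theorem~\ref{GOD} the mice $M^{\#, \Si_{A}}_{2}(M_{A}, X)$ yield $A$-iterable $\IP_{\max}$ preconditions. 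Choose now a strongly universally Baire $F^{*} \colon \HC \to \HC$ rich enough that from a code for a set of reals $A$ it computes codes for $(M_{A}, \Si_{A})$ and for the mice $M^{\#, \Si_{A}}_{2}(M_{A}, X)$ of Theorem~\ref{GOD}; $F^{*}$ is strongly universally Baire because finitely many combinations of strongly universally Baire operators remain so, using that $\Gamma^{\infty}$ is productive and Lemma~\ref{abs}. Applying honest consistency of $\varphi(\mathcal{M})$ to this $F^{*}$ produces a $W$-generic $g \subset \Col(\omega, 2^{\aleph_{2}})$ and a transitive $\mathfrak{A} \in W[g]$ that is $F^{*}$-closed, models $\ZFC^{-} + \varphi(\mathcal{M})$, contains $(H_{\aleph_{3}})^{W}$ (hence $\mathcal{M}$, $G$, and $A_{\tau}$), and satisfies $\dot{I}_{\NS}^{\mathfrak{A}} \cap W = \NS_{\omega_{1}}^{W}$.

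\textbf{Reflecting to $\Psi(\mathcal{M}, \varphi)$.} This is the heart. Let $A \in L^{F}(\Gamma, \R)$ be a single set of reals amalgamating $A_{\tau}$, a code for $F^{*}$, and the parameters of $\varphi$. Being $F^{*}$-closed, $\mathfrak{A}$ contains $(M_{A}, \Si_{A})$ and, for an appropriate swo parameter $X$, the mouse $N = M^{\#, \Si_{A}}_{2}(M_{A}, X)$, hence by Theorem~\ref{GOD} the $A$-iterable $\IP_{\max}$ precondition $p = ((N||\kappa)[g_{0}, g_{1}], \in, \NS_{\omega_{1}}^{(N||\kappa)[g_{0}, g_{1}]})$. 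Using the $F^{*}$-closure and $\NS_{\omega_{1}}$-correctness of $\mathfrak{A}$, the presence of $G$ and of the $H$-coding, and homogeneity of $\IP_{\max}$, I would locate an $A$-iterable condition on the generic iteration leading to $G$ whose canonical length-$\omega_{1}$ generic iteration $j \colon p \to p^{*}$ (for a suitable choice of the $a$-coordinate, sent to $A_{G}$) lies in $W$ and is well-founded; $A$-iterability then gives $j(A \cap p) = A \cap p^{*}$ and the correctness $(\HC^{p^{*}}, \in, A \cap p^{*}) \prec (\HC, \in, A)$. Reading the copy of $\tau$, hence of $\mathcal{M}$, off inside $p$, its $j$-image is an $\aleph_{1}$-sized transitive $\bar{\mathcal{M}} \in W$ with an elementary $\pi = j \res \bar{\mathcal{M}} \colon \bar{\mathcal{M}} \to \mathcal{M}$. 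Finally $\varphi(\bar{\mathcal{M}})$ holds in $W$: $\varphi(\mathcal{M})$ holds in the $\NS_{\omega_{1}}$-correct model $\mathfrak{A}$, so $\varphi(\bar{\mathcal{M}})$ holds in a correspondingly $\NS_{\omega_{1}}$-correct model appearing along $j$ (namely $p^{*}$, whose ideal is $\NS_{\omega_{1}}^{W} \cap p^{*}$), and since $\varphi$ is $\Sigma_{1}$ in $\mathcal{L}_{\dot{\in}, \dot{I}_{\NS}}$ and $\bar{\mathcal{M}}$ has size $\aleph_{1}$, upward $\Sigma_{1}$-absoluteness carries it to $W$. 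Thus $\Psi(\mathcal{M}, \varphi)$ holds, so $W \models \MM^{*, ++}_{\mathfrak{c}}$; with the first part, $W \models \ZFC + \MM^{*, ++}_{\mathfrak{c}}$. (By Theorem~\ref{charchar}, $\MM^{++}_{\mathfrak{c}}$, and in particular $\BMM^{++}$ and $\MM^{++}(\mathfrak{c})$, come for free.)

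\textbf{Main obstacle.} The delicate step is the last one: matching the honest-consistency witness $\mathfrak{A}$ --- which lives only in the collapse extension $W[g]$ and is merely $F^{*}$-closed and $\NS_{\omega_{1}}$-correct --- with the genuine $\IP_{\max}$-generic $G$ over $L^{F}(\Gamma, \R)$, so that the reflecting iteration $j \colon p \to p^{*}$ is in fact definable inside $W$. This requires choosing the amalgamated $A$ with care (so that its capturing mouse truly sees the $\IP_{\max}$-name for $\mathcal{M}$ and the $F^{*}$-data), a genuine use of the $H$-generic subset of $\omega_{3} = \Theta^{L^{F}(\Gamma, \R)}$ to descend from the size $\mathfrak{c}$ of $\mathcal{M}$ to the size $\aleph_{1}$ of $\bar{\mathcal{M}}$, and a check that $\Sigma_{1}$-ness of $\varphi$ together with the $\NS_{\omega_{1}}$-correctness propagated along $j$ really does pull $\varphi(\bar{\mathcal{M}})$ back to $W$. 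Everything else is standard $\IP_{\max}$ bookkeeping or a direct appeal to the capturing machinery of Section~3.
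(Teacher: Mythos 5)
Your architecture matches the paper's up to the point where the honest-consistency witness has to be brought into contact with the $\IP_{\max}$ generic, and exactly there --- the step you yourself flag as the ``main obstacle'' --- lies a genuine gap rather than a routine verification. The witness $\mathfrak{A}$, and with it the $A$-iterable precondition produced via Theorems \ref{capture2} and \ref{GOD}, exists only in the collapse extension $W[g]$; it is built from reals added by $g$, so it is not a condition in, nor compatible in any direct sense with, the already-fixed generic $G\subset\IP_{\max}$, whose members lie in $L^{F}(\Gamma,\R)$. ``Locating an $A$-iterable condition on the generic iteration leading to $G$'' which in addition reflects $\varphi(\mathcal{M})$ is precisely what needs proof, and homogeneity of $\IP_{\max}$ does not supply it: density arguments only give that $G$ meets dense sets lying in $L^{F}(\Gamma,\R)$, whereas the reflection you need is witnessed only after further collapsing. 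Moreover $\pi=j\res\bar{\mathcal{M}}$ cannot be taken literally: $\mathcal{M}$ has size $\mathfrak{c}$ and is not the $j$-image of anything inside the countable $p$; elementarity of the eventual map has to be proved by a Tarski--Vaught argument relative to a name for a surjection of $\R$ onto $M$, which your setup does not provide.

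The paper's proof supplies exactly the missing transfer device, and it is where productivity is really used (you invoke Lemma \ref{abs} only to see that $F^{*}$ is strongly universally Baire). Arguing by contradiction from a condition $(p,\dot{s})$ forcing that $\varphi(\dot{\mathcal{M}})$ is honestly consistent but $\Psi(\dot{\mathcal{M}},\varphi)$ fails, one fixes a $\IP_{\max}$-name $\dot{f}$ for a bijection of $\R$ with $\dot{M}$ and codes the relevant forcing relations by sets of reals $B_{=}$, $B_{\in}$, $B_{\vec{R}}$, together with Tarski--Vaught dense sets $E_{\psi,\vec{z}}$ and their amalgam $E$. A ``good pair'' $(q,h)$ --- a condition $q$ containing a filter $h$ meeting all $E_{\psi,\vec{z}}$, with $q<_{\IP_{\max}}r$ for $r\in h$, $q$ being $(B_{=}\oplus B_{\in}\oplus B_{\vec{R}}\oplus E)$-iterable, and $q\models\varphi(\mathcal{M}_{h})$ for the induced term structure $\mathcal{M}_{h}$ --- is then a $\Sigma^{1}_{2}$ condition in these universally Baire parameters. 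The honest-consistency witness, combined with Theorems \ref{capture2} and \ref{GOD} applied to $B_{=}\oplus B_{\in}\oplus B_{\vec{R}}\oplus E$, yields a good pair $(q,G)$ in $V[G][H][g]$, and Lemma \ref{abs} pulls the existence of a good pair $(q,h)$ with $q<_{\IP_{\max}}p$ back into $V=L^{F}(\Gamma,\R)$ itself. Forcing below this $q$, the length-$\omega_{1}$ generic iteration $j\colon q\to q^{*}$ induced by the new generic sends $h$ to $h^{*}\subset G$; the term model $\mathcal{M}_{h^{*}}$ embeds elementarily into $\dot{\mathcal{M}}^{G}$ via $\dot{f}^{G}$ (Tarski--Vaught, using the $E_{\psi,\vec{z}}$ met by $h^{*}$), and upward $\Sigma_{1}$-absoluteness gives $\varphi(\mathcal{M}_{h^{*}})$ in $V[G][H]$, contradicting the choice of $(p,\dot{s})$. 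Without some such absoluteness transfer back into the determinacy model, your plan of matching $\mathfrak{A}$ directly with the fixed $G$ does not go through.
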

\begin{proof} 
Let us assume that $V=L^{F}(\Gamma, \R)$. Note that $V\models T_{0}$. The same argument as \cite[Theorem 9.35]{Wo} gives us that 
\[ L^{F}(\Gamma, \R)^{\IP_{\max}}\models \omega_{2}{-}\DC\footnote{We will not use the notation $\DC_{\omega_{2}}$, since it is inconsistent with the notation $\DC_{\R}$.}. \]
Hence we have 
\[ L^{F}(\Gamma, \R)^{\IP_{\max}\ast\mathrm{Add}(\omega_{3}, 1)}\models \ZFC. \]

Let us fix a standard coding of $\IP_{\max}$-conditions into reals and identify a $\IP_{\max}$-condition and its code. 
For a filter $h\subset\IP_{\max}$, we define 
\[\R_{h}=\bigcup_{p\in h}\R\cap p. \] 
For $V$-generic $G\subset\IP_{\max}$ and $V[G]$-generic $H\subset\mathrm{Add}(\omega_{3}, 1)$, 
we note that 
\[\R_{G}=\R\cap V=\R\cap V[G]=\R\cap V[G][H], \]
since $\IP_{\max}$ is $\sigma$-closed and $\mathrm{Add}(\omega_{3}, 1)$ is $\omega_{3}$-closed. 
Also, note that 
\[ \mathcal{P}(\R)\cap V[G]=\mathcal{P}(\R)\cap V[G][H], \]
since $\mathrm{Add}(\omega_{3}, 1)$ is $\omega_{3}$-closed and $2^{\aleph_{0}}=\aleph_{2}$ in $V[G]$. 
Now suppose that 
\[ V^{\IP_{\max}\ast\mathrm{Add}(\omega_{3}, 1)}\models\lnot\MM^{*, ++}_{\mathfrak{c}}. \]
Assume that we have the following. 
\begin{enumerate}
    \item $\varphi$ is a $\Si_{1}$ formula; 
    \item $\dot{\mathcal{M}}=(\dot{M}, \in, \dot{\vec{R}})$ is a $\IP_{\max}$-name for a transitive structure with $\aleph_{1}$-many relations, and $\dot{f}\in V^{\IP_{\max}}$; 
    \item $(p, \dot{s})\in\IP_{\max}\ast\mathrm{Add}(\omega_{3}, 1)$; 
    \item $p\dststile{\IP_{\max}}{} \dot{f}\colon\R\to\dot{M}$ is bijective; and 
    \item $(p, \dot{s})\dststile{\IP_{\max}\ast\mathrm{Add}(\omega_{3}, 1)}{} \varphi(\dot{\mathcal{M}})$ is honestly consistent; and 
    \item $(p, \dot{s})\dststile{\IP_{\max}\ast\mathrm{Add}(\omega_{3}, 1)}{}\lnot\Psi(\dot{\mathcal{M}}, \varphi)$. 
\end{enumerate}
We shall derive a contradiction by finding $q<_{\IP_{\max}} p$ such that 
\[ (q, \dot{s})\dststile{\IP_{\max}\ast\mathrm{Add}(\omega_{3}, 1)}{}\Psi(\dot{\mathcal{M}}, \varphi). \]
Let 
\[ B_{=}=\{(p, x, y)\in \R^{3} \mid p\in \IP_{\max} \land p\dststile{\IP_{\max}}{}\dot{f}(x)=\dot{f}(y)\}, \]
let 
\[ B_{\in}=\{(p, x, y)\in \R^{3} \mid p\in \IP_{\max} \land p\dststile{\IP_{\max}}{}\dot{f}(x)\in\dot{f}(y)\}, \]
and let
\[ B_{\vec{R}}=\{(p, x, y)\in \R^{3} \mid p\in \IP_{\max} \land x\in\mathrm{WO} \land 
p\dststile{\IP_{\max}}{}\dot{f}(y)\in R_{|x|_{\mathrm{WO}}}\}. \]
For each formula and for each $\vec{z}\in\R^{<\omega}$, let $E_{\psi, \vec{z}}$ be a set of conditions $p\in\IP_{\max}$ such that
\begin{enumerate}
    \item $p$ decides the sentence $``\dot{\mathcal{M}}\models \exists x\psi(x, \dot{f}(\check{\vec{z}}))$'', and 
    \item if $p\dststile{\IP_{\max}}{}\dot{\mathcal{M}}\models\exists x\psi(x, \dot{f}(\check{\vec{z}}))$, then there is $x\in p$ such that 
    \[p\dststile{\IP_{\max}}{} \dot{\mathcal{M}}\models \psi(\dot{f}(\check{x}), \dot{f}(\check{\vec{z}})). \]
\end{enumerate}
Note that $E_{\psi, \vec{z}}$ is also a dense subset of $\IP_{\max}$. 
Let 
\[E=\{(p, \ulcorner\psi\urcorner, \vec{z}) \mid p\in E_{\psi, \vec{z}}\}. \]
Suppose a pair $(q, h)$ satisfies: 
\begin{enumerate}
    \item $q=(N, \in, J, a)\in \IP_{\max}$, $h\in q$ is a filter in $\IP_{\max}$; 
    \item $(N, \in)\models\ZFC^{-}$
    \item $q<_{\IP_{\max}} p$ for all $p\in h$; and 
    \item $q$ is $(B_{=}\oplus B_{\in}\oplus B_{\vec{R}}\oplus E)$-iterable. 
\end{enumerate}
We define a structure $(\R_{h}/\sim, \tilde{\in}, \tilde{R})$ as follows. 
For $x, y\in\R_{h}$ and for $\al<\omega_{1}^{q}$, we define 
\[x\sim y\iff \exists r \in h\, (r, x, y)\in B_{=}, \]
and 
\[ x\tilde{\in}y \iff \exists r\in h\, (r, x, y)\in B_{\in}, \]
and 
\[\tilde{R}_{\al}(x) \iff \exists r\in h\, \exists w\in \mathrm{WO}_{\al}\, (r, w, x)\in B_{\vec{R}}. \]
Since $q$ is $(B_{=}\oplus B_{\in}\oplus B_{\vec{R}}\oplus E)$-iterable and $h$ is a filter, the quotient $(\R_{h}/\sim, \tilde{\in}, \tilde{R})$ is well-defined. 
Note that the relation $\tilde{\in}$ is extensional and well-founded since $h$ is a filter and $\IP_{\max}$ is $\si$-closed. 
Then we let
\[ \si_{h}\colon \mathcal{M}_{h}:=(M_{h}, \in, \vec{R}_{h})\simeq (\R_{h}/\sim, \tilde{\in}, \tilde{R})\]
be the uncollapsing map. 

Now we say a pair $(q, h)$ is \textit{good} if $(q, h)$ satisfies: 
\begin{enumerate}
    \item $q=(N, \in, J, a)\in \IP_{\max}$, $h\in q$ is a filter in $\IP_{\max}$; 
    \item $(N, \in)\models\ZFC^{-}$
    \item $q<_{\IP_{\max}} p$ for all $p\in h$; 
    \item $q$ is $(B_{=}\oplus B_{\in}\oplus B_{\vec{R}}\oplus E)$-iterable; 
    \item $q\models \varphi(\mathcal{M}_{h})$; and 
    \item for every formula $\psi$ and for every $\vec{z}\in (\R_{h})^{<\omega}$, $E_{\psi, \vec{z}}\cap h\neq\emptyset$. 
\end{enumerate}
Note that ``being a good pair'' is $\Si^{1}_{2}$ statement in $B_{=}$, $B_{\in}$, $B_{\vec{R}}$, and $E$. 
We shall find a good pair in the generic extension of $V$ by $\IP_{\max}\ast\mathrm{Add}(\omega_{3}, 1)\ast\Col(\omega, 2^{\omega_{2}})$, and using Lemma \ref{abs}, we shall find a good pair in $V$. 

{\bf Claim.} Let $G\subset\IP_{\max}$ be $V$-generic with $p\in G$, let $H\subset\mathrm{Add}(\omega_{3}, 1)$ be $V[G]$-generic with $\dot{s}_{G}\in H$, and let $g\subset \Col(\omega, 2^{\omega_{2}})$ be $V[G][H]$-generic. Then, in $V[G][H][g]$ there is $q\in \IP_{\max}$ such that $(q, G)$ is good. 

{\bf Proof.} By Theorem \ref{capture2}, let $(N, \delta, \tau, \Si)$ be such that 
\begin{enumerate}
    \item $(N, \delta, \tau, \Si)$ strongly captures $(B_{=}\oplus B_{\in}\oplus B_{\vec{R}}\oplus E)$, and 
    \item the function $F\colon X\mapsto M^{\#, \Si}_{2}(X)$, where $X\in \HC$ is self-well-ordered and $N\in L_{1}(X)$, is well-defined, total, and strongly universally Baire. 
\end{enumerate}
Let $\mathfrak{A}\in V[G][H][g]$ be an $F$-closed witness to the fact that $\varphi(\dot{\mathcal{M}}^{G})$ is honestly consistent. Let $X\in\mathfrak{A}$ be transitive and such that 
\begin{enumerate}
    \item $(\mathcal{P}(\omega_{1})\cap \mathfrak{A})\cup\{(\NS_{\omega_{1}})^{\mathfrak{A}}, G, \dot{\mathcal{M}}^{G}\}\in X$, and 
    \item $X\models \varphi(\dot{\mathcal{M}}^{G})$. 
\end{enumerate}
Let $M=M^{\#, \Si}_{2}(X)$, and let $\delta_{0}$, $g_{0}$, $g_{1}$, $\mathbb{Q}$, and $\kappa$ be as in the statement of Theorem \ref{GOD}. 

Then by Theorem \ref{GOD}, inside $V[G][H][g]$ we have  
\[ q=((M||\kappa)[g_{0}, g_{1}], \in, (\NS_{\omega_{1}})^{(M||\kappa)[g_{0}, g_{1}]}, A_{G})\]
is such that 
\begin{enumerate}
    \item $q\in\IP_{\max}$ and $G\in q$; 
    \item $((M||\kappa)[g_{0}, g_{1}], \in)\models\ZFC$; 
    \item $q<_{\IP_{\max}}r$ for all $r\in G$; and 
    \item $q$ is $(B_{=}\oplus B_{\in}\oplus B_{\vec{R}}\oplus E)^{*}$-iterable. 
\end{enumerate}
Note that $\mathcal{M}_{G}=\dot{\mathcal{M}}^{G}$ where $\mathcal{M}_{G}$ is the transitive collapse of $(\R_{G}/\sim, \tilde{\in}, \tilde{R}_{G})$. Hence we have 
\[ q\models \varphi(\mathcal{M}_{G}). \]
Also, we have 
\[ E_{\psi, \vec{z}}\cap G\neq\emptyset\]
for every formula $\psi$ and for every $\vec{z}\in (\R_{G})^{<\omega}$ by the genericity of $G$. Therefore, $(q, G)$ is a good pair. {\bf (Q.E.D. Claim.)}

Therefore, by Lemma \ref{abs}, there is a good pair $(q, h)$ in $V$ with $q<_{\IP_{\max}}p$. We claim that 
\[ (q, \dot{s})\dststile{\IP_{\max}\ast\mathrm{Add}(\omega_{3}, 1)}{}\Psi(\dot{\mathcal{M}}, \varphi). \]
Let $G\subset\IP_{\max}$ be $V$-generic with $q\in G$, let $H\subset\mathrm{Add}(\omega_{3}, 1)$ be $V[G]$-generic with $\dot{s}^{G}\in H$. 
Let $j\colon q\to q^{*}$ is the generic iteration of length $\omega_{1}$ induced by $G$, and let $h^{*}=j(h)$. 

Since $q\models \varphi(\mathcal{M}_{h})$, we have $q^{*}\models \varphi(\mathcal{M}_{h^{*}})$. 
By the upward absoluteness of $\Si_{1}$ formulae, $\varphi(\mathcal{M}_{h^{*}})$ holds in $V[G]$. 

We define $\pi\colon\mathcal{M}_{h^{*}}\to \dot{\mathcal{M}}^{G}$ as follows. For $x\in \mathcal{M}_{h^{*}}$, letting $y\in \R_{h^{*}}$ be such that $\si_{h^{*}}(x)=[y]_{\sim}$ where $\si_{h^{*}}\colon \mathcal{M}_{h^{*}}\simeq (\R_{h^{*}}/\sim, \tilde{\in}, \tilde{R})$ is uncollapsing and ${[y]}_{\sim}$ is the $\sim$-equivalence class of $y$, we define $\pi(x)=\dot{f}^{G}(y)$. $\pi$ is well-defined, since $h^{*}\subset G$ and $q$ is $(B_{=}\oplus B_{\in}\oplus B_{\vec{R}}\oplus E)$-iterable. We shall show that $\pi$ is elementary by Tarski--Vaught test. 
 
Let $\vec{z}\in (\R_{h^{*}})^{<\omega}$ and let $\psi$ be a formula, and suppose that
\[ \dot{\mathcal{M}}_{G} \models \exists x \psi(x, \dot{f}^{G}(\vec{z})). \]
Since $(q, h)$ is good and $j$ is elementary, 
\[ E_{\psi,\vec{z}}\cap h^{*}\neq\emptyset. \]
Let $r\in E_{\psi,\vec{z}}\cap h^{*}$. Then there is $x\in r$ such that 
\[ r\dststile{\IP_{\max}}{}\psi(\dot{f}(x), \dot{f}(\vec{z})). \]
Hence we have $\dot{f}^{G}(x)$ is in the image of $\pi$ and $\dot{\mathcal{M}}^{G}\models\psi(\dot{f}^{G}(x), \dot{f}^{G}(\vec{z}))$, since $r\in h^{*}\subset G$. This shows $\pi\colon\mathcal{M}_{h^{*}}\to \dot{\mathcal{M}}^{G}$ is elementary. Therefore, $\pi\colon\mathcal{M}_{h^{*}}\to \dot{\mathcal{M}}^{G}$ witnesses that $\Psi(\dot{\mathcal{M}}^{G}, \varphi)$ holds in $V[G][H]$. This is a contradiction! This finishes the proof. 
\end{proof}

\section{Ground model for Main Theorem 1}
In this section, we prove the second main theorem. 
First, we recall notations from \cite{DM1}. 
\begin{defn}
    Let $\lambda$ be a limit of Woodin cardinals and $G\subset\Col(\omega, <\lambda)$ be a $V$-generic filter. Define: 
    \begin{itemize}
        \item $\R^{*}_{G}=\bigcup_{\alpha<\lambda}\R^{V[G\res\alpha]}$, 
        \item $\Hom^{*}_{G}$ is the pointclass of sets $\bigcup_{\alpha<\lambda}A^{G {\res} \alpha}$ for $<\lambda$-universally Baire sets of reals $A$ appearing in $V[G \res \alpha]$ where $\alpha<\lambda$. 
    \end{itemize}
    We call $L(\R^{*}_{G}, \Hom^{*}_{G})$ \textit{a derived model at $\la$}. 
\end{defn}
If $G$ is clear from the context, then we omit the subscript $G$. 
\begin{thm}[The derived model theorem, Woodin, {\cite{DM1}}]
Let $\lambda$ be a limit of Woodin cardinals, let $G\subset\Col(\omega, <\lambda)$ be a $V$-generic filter, and let $L(\R^{*}, \Hom^{*})$ be a derived model at $\la$. Then 
\begin{enumerate}
    \item $L(\R^{*}, \Hom^{*})\models\AD^{+}$, and 
    \item $\Hom^{*}=\{A\subset\R^{*} \mid A \text{ is Suslin, co-Suslin in } L(\R^{*}, \Hom^{*})\}. $
\end{enumerate}
Moreover, if $\la$ is a limit of $<\la$-strong cardinals, then 
\begin{enumerate}
    \item $\mathcal{P}(\R^{*})\cap L(\R^{*}, \Hom^{*})=\Hom^{*}$, and 
    \item $L(\R^{*}, \Hom^{*})\models\AD_{\R}$. 
\end{enumerate}
\end{thm}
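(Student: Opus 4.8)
I would follow Woodin's original argument (see \cite{DM1}). Write $\R^{*}=\R^{*}_{G}$ and $\Hom^{*}=\Hom^{*}_{G}$, and work in $V[G]=\bigcup_{\al<\la}V[G\res\al]$, keeping in mind that $\R^{*}$ is a \emph{proper} subset of $\R^{V[G]}$. The first step is to show that every $A\in\Hom^{*}$ is ``homogeneously Suslin over $\R^{*}$'' in $V[G]$, hence determined and scaled. Fix $A=A^{*}_{G}$ where $A$ is $<\la$-universally Baire in $V[G\res\al_{0}]$ for some $\al_{0}<\la$; since $V[G\res\al_{0}]$ has Woodin cardinals cofinal in $\la$, $A$ is $\ka$-homogeneously Suslin there for every $\ka<\la$. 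Using the Martin--Steel analysis of how homogeneity systems transfer under small forcing in the presence of Woodin cardinals, I would check that for each $\al_{0}\le\be<\la$ the reinterpretation $A^{G\res\be}$ remains $<\la$-homogeneously Suslin in $V[G\res\be]$ and that these systems cohere as $\be\to\la$; their ``union'' then gives, in $V[G]$, a homogeneity system $\vec\mu$ with the property that for $x\in\R^{*}$ one has $x\in A^{*}_{G}$ iff the tower $\langle\mu_{x\res n}\mid n<\w\rangle$ is wellfounded, every such tower being decided by $\vec\mu$. Martin's tree argument then yields determinacy of $A^{*}_{G}$, and $\vec\mu$ yields a scale on $A^{*}_{G}$ lying in $\Hom^{*}$; hence $\Hom^{*}$ is closed under complements, continuous preimages and $\exists^{\R}$, and is scaled.

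Next I would prove $L(\R^{*},\Hom^{*})\models\AD^{+}$. That $\DC_{\R}$ (hence $\DC$) holds in $L(\R^{*},\Hom^{*})$ follows by the standard argument. The substantive point is that every $B\in\mathcal{P}(\R^{*})\cap L(\R^{*},\Hom^{*})$ is $\infty$-Borel and that the model satisfies ordinal determinacy, and the tool is term-relation capturing. Such a $B$ is $\Si_{1}$-definable over some $L_{\gamma}(\R^{*},\Hom^{*})$ from a real and finitely many trees of scales on $\Hom^{*}$ sets, all of which appear by some bounded stage $\al<\la$; picking a Woodin cardinal $\de$ of $V[G\res\al]$ with $\al<\de<\la$ and running the genericity-iteration/term-capturing machinery at $\de$ (the circle of ideas around Theorem~\ref{extalg} and Theorem~\ref{GOD}) produces at stage $\al$ a term relation $\tau$ capturing $B$, which unpacks to an $\infty$-Borel code for $B$; the same reduction shows that the long and ordinal-length games attached to $B$ reduce to homogeneously Suslin games and are therefore determined. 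In particular $\AD$ holds in $L(\R^{*},\Hom^{*})$, and assembling $\AD$, $\DC_{\R}$, ``every set of reals is $\infty$-Borel'' and ordinal determinacy gives $\AD^{+}$.

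For the characterization of $\Hom^{*}$ as the class of sets of reals which are Suslin and co-Suslin in $L(\R^{*},\Hom^{*})$, the inclusion $\subseteq$ is immediate from the first step, since scales in $\Hom^{*}$ on $A$ and on $\R^{*}\ssm A$ provide trees in the model. For $\supseteq$, if $A=p[T]=\R^{*}\ssm p[U]$ with $T,U$ in some $L_{\gamma}(\R^{*},\Hom^{*})$, a further reflection-plus-capturing argument---again using that $\Hom^{*}$ is scaled and that $\Theta^{L(\R^{*},\Hom^{*})}$ is large relative to the Woodin cardinals below $\la$---exhibits $A$ as the reinterpretation of a $<\la$-universally Baire set from a bounded stage, i.e.\ $A\in\Hom^{*}$. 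Finally, for the ``Moreover'', assume $\la$ is also a limit of $<\la$-strong cardinals. Given $B\in\mathcal{P}(\R^{*})\cap L(\R^{*},\Hom^{*})$, take the $\infty$-Borel code for $B$ produced in the second step, living at a bounded stage $\al$; a $<\la$-strong cardinal $\ka\in(\al,\la)$ lets one convert this code into a homogeneity system, so that $B$ is the reinterpretation of a $<\la$-universally Baire set from a bounded stage, i.e.\ $B\in\Hom^{*}$. Hence $\mathcal{P}(\R^{*})\cap L(\R^{*},\Hom^{*})=\Hom^{*}$, so every set of reals of the model is Suslin there, and Woodin's theorem that, under $\AD^{+}$, ``every set of reals is Suslin'' implies $\AD_{\R}$ completes the proof.

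The hard part will be the second step, and the analogous capturing in the last two steps: extracting, uniformly in $B$, the $\infty$-Borel code and the reduction of ordinal-length games to homogeneously Suslin games for an \emph{arbitrary} set of reals of $L(\R^{*},\Hom^{*})$ requires the full genericity-iteration/term-relation apparatus at the Woodin cardinals below $\la$, and one must verify carefully that the homogeneity and $\infty$-Borel representations obtained at the successive stages $V[G\res\al]$ genuinely cohere in the limit $\la$.
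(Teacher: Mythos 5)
You should first be aware that the paper contains no proof of this statement to compare against: the Derived Model Theorem is quoted as a black box, attributed to Woodin and cited from \cite{DM1}, and nothing in the paper re-derives it. So your text is an attempted reconstruction of a cited result rather than an alternative to an argument in the paper. On its own terms, your outline follows the standard Woodin--Steel route and the architecture is the right one: $<\la$-universally Baire sets are $<\la$-homogeneously Suslin when Woodin cardinals are cofinal in $\la$, which yields determinacy and $\Hom^{*}$-scales and the closure properties of $\Hom^{*}$; $\AD^{+}$ in $L(\R^{*},\Hom^{*})$ is assembled from $\DC_{\R}$, $\infty$-Borel codes, and ordinal determinacy, all obtained via term-relation capturing and genericity iterations at Woodin cardinals below $\la$ (the circle of ideas around Theorem \ref{extalg}); the characterization of $\Hom^{*}$ as the Suslin--co-Suslin sets of the model, and, when $\la$ is in addition a limit of $<\la$-strong cardinals, the equality $\mathcal{P}(\R^{*})\cap L(\R^{*},\Hom^{*})=\Hom^{*}$, come from the tree production lemma and from converting $\infty$-Borel codes into homogeneity systems using a strong cardinal, after which $\AD_{\R}$ follows from $\AD^{+}$ together with ``every set of reals is Suslin.'' However, as written this is a road map, not a proof: the points you yourself flag as hard --- capturing an \emph{arbitrary} set of the model by a term relation at a bounded stage, verifying that the reinterpreted homogeneity systems cohere as $\be\to\la$, and the inclusion of the Suslin--co-Suslin sets of the model into $\Hom^{*}$ --- are precisely where the substance of \cite{DM1} lies, and they are invoked by name rather than carried out. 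Since the paper itself treats the theorem as a citation, deferring to \cite{DM1} is the appropriate course; but if your text is meant as a proof rather than a summary, those steps would have to be supplied in full.
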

The paper \cite{uB} constructs a generalized derived model that satisfies $\ZF+\AD^{+}+$ ``Every set of reals is universally Baire''. 
\begin{defn}[{\cite[Definition 4.4]{uB}}]\label{predF}
    The predicate $F_{\mathrm{uB}}$ consists of all quadruples $(A, Z, p, x)$ such that
    \begin{itemize}
        \item $A$ is universally Baire, $Z$ is a set, $p\in\Col(\omega, Z)$, $x$ is a $\Col(\omega, Z)$-name for a real, and 
        \item $p\dststile{\Col(\omega, Z)}{} x\in A^{\dot{G}}$. 
    \end{itemize}
\end{defn}
\begin{notation}
    In the rest of this paper, the predicate $F_{\mathrm{uB}}$ always denotes the predicate defined in Definition \ref{predF}. 
\end{notation}

The paper \cite[Main Theorem]{uB} shows that the predicate $F_{\mathrm{uB}}$ has enough information to get universally Baire representations. 
\begin{thm}[Larson--Sargsyan--Wilson, {\cite[Main Theorem]{uB}}]\label{ubdm}
    Let $\lambda$ be a limit of Woodin cardinals and a limit of strong cardinals, let $G\subset\Col(\omega, <\lambda)$ be a $V$-generic filter, and define the model 
    \[ \mathcal{M}=(L^{F_{\mathrm{uB}}}(\R^{*}, \Hom^{*}))^{V(\R^{*})}. \]
    Then 
    \[ \mathcal{M}\cap\mathcal{P}(\R^{*})=\Hom^{*}, \]
    and 
    \[ \mathcal{M}\models \AD^{+}+ \text{``Every set of reals is universally Baire''}. \]
\end{thm}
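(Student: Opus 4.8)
The plan is to split the statement into the identity $\mathcal{M}\cap\mathcal{P}(\R^{*})=\Hom^{*}$ and, building on that, the theory $\AD^{+}$ plus ``every set of reals is universally Baire''. Throughout I would work inside the symmetric extension $V(\R^{*})$, in which $\R\cap V(\R^{*})=\R^{*}$ and $\mathcal{M}$ is definable, and take as the base case Woodin's derived model theorem: since $\lambda$ is a limit of Woodin cardinals and, being a limit of strong cardinals, also a limit of $<\lambda$-strong cardinals, the model $L(\R^{*},\Hom^{*})$ satisfies $\AD^{+}$, has $\mathcal{P}(\R^{*})\cap L(\R^{*},\Hom^{*})=\Hom^{*}$, and has $\Hom^{*}$ equal to its class of Suslin--co-Suslin sets.

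For $\mathcal{M}\cap\mathcal{P}(\R^{*})=\Hom^{*}$ the inclusion $\supseteq$ is immediate, since $\Hom^{*}$, and hence each of its members, lies in $\mathcal{M}$ by construction. For $\subseteq$ I would argue by induction on the levels $L^{F_{\mathrm{uB}}}_{\gamma}(\R^{*},\Hom^{*})$ that no new set of reals ever appears, running a reflection and localization argument in the style of the proof of the derived model theorem but adapted to the extra predicate. A set of reals arising at a successor stage is definable from finitely many reals of $\R^{*}$, a member of $\Hom^{*}$, some ordinals, and the fragment of $F_{\mathrm{uB}}$ present in the model constructed so far; all those reals and the $\Hom^{*}$-parameter appear in some $V[G\res\alpha]$ with $\alpha<\lambda$, and the relevant fragment of $F_{\mathrm{uB}}$ --- its restriction to sets $Z$ and $\Col(\omega,Z)$-names occurring below $\lambda$ --- is already decided by a $<\lambda$-universally Baire set of $V[G\res\alpha]$, using $<\lambda$-homogeneity of the collapse and the absolute complementation of the trees representing that set, so that fragment is itself coded by an element of $\Hom^{*}$. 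Pushing the definition through this coding, one sees that the new set is the canonical expansion to $\R^{*}$ of a $<\lambda$-universally Baire set appearing in some $V[G\res\beta]$, hence an element of $\Hom^{*}$.

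Granting $\mathcal{M}\cap\mathcal{P}(\R^{*})=\Hom^{*}$, ``every set of reals is universally Baire'' in $\mathcal{M}$ is exactly what the predicate $F_{\mathrm{uB}}$ is built to provide: given $A\in\Hom^{*}$, trace it back to a $<\lambda$-universally Baire set $A_{0}$ appearing in some $V[G\res\alpha]$ together with a $<\lambda$-absolutely complementing pair of trees $(T_{0},U_{0})$ for $A_{0}$ there; then for each set $Z\in\mathcal{M}$ one defines over $\mathcal{M}$, out of $T_{0},U_{0}$ and the bookkeeping of the symmetric collapse, a $Z$-absolutely complementing pair of trees for $A$, the verification that the pair is absolutely complementing being itself an instance of $F_{\mathrm{uB}}$. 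So every set of reals of $\mathcal{M}$ is universally Baire, hence Suslin and co-Suslin, in $\mathcal{M}$. And $\AD^{+}$ then follows: $\AD$ and $\DC_{\R}$ transfer upward from $L(\R^{*},\Hom^{*})$ to $\mathcal{M}$, since the two models have the same reals and the same sets of reals and the witnesses involved (winning strategies for integer games, $\DC_{\R}$-sequences) are reals; combining $\AD+\DC_{\R}$ with ``every set of reals is Suslin'' gives $\AD^{+}$ by the known implication that $\AD_{\R}$, and therefore $\AD^{+}$, follows from $\AD+\DC_{\R}$ plus ``every set of reals is Suslin''. (Alternatively one transfers $\AD^{+}$ directly from $L(\R^{*},\Hom^{*})$, whose $\infty$-Borel codes and ordinal-game strategies live in $\On$.)

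The main obstacle is the inclusion $\mathcal{M}\cap\mathcal{P}(\R^{*})\subseteq\Hom^{*}$. Viewed as a class of $V$, $F_{\mathrm{uB}}$ encodes information about all universally Baire sets of $V$ --- a pointclass that may vastly exceed $\Hom^{*}$ --- so a priori $\mathcal{M}$ could use $F_{\mathrm{uB}}$ to manufacture a set of reals outside $\Hom^{*}$. The resolution, which is the technical heart of the Larson--Sargsyan--Wilson theorem and rests on the derived model analysis of Woodin and Steel, is that only the ``local'' part of $F_{\mathrm{uB}}$ --- its restriction to parameters already inside $\mathcal{M}$, namely names for the symmetric collapse and sets appearing below $\lambda$ --- is ever accessible within $\mathcal{M}$, and homogeneity of $\Col(\omega,<\lambda)$ forces that local part to be coded by $\Hom^{*}$-sets. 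Carrying out this bookkeeping rigorously, level by level through the constructible hierarchy of $\mathcal{M}$, is the delicate step.
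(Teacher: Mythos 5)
First, note that the paper does not prove this statement at all: it is quoted as the Main Theorem of Larson--Sargsyan--Wilson \cite{uB}, so there is no internal proof to match your argument against; what you have written is an outline of how one would expect the proof in \cite{uB} to go, and it has to be judged on its own.

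Judged that way, there is a genuine gap, and it sits exactly where you yourself locate ``the delicate step.'' The entire content of the theorem is the inclusion $\mathcal{M}\cap\mathcal{P}(\R^{*})\subseteq\Hom^{*}$ together with universal Baireness inside $\mathcal{M}$, and for both you only assert the key mechanism rather than supply it. Your induction on the levels of $L^{F_{\mathrm{uB}}}(\R^{*},\Hom^{*})$ hinges on the claim that the fragment of $F_{\mathrm{uB}}$ visible to the construction is ``already decided by a $<\lambda$-universally Baire set of $V[G\res\alpha]$, using $<\lambda$-homogeneity of the collapse''; but $F_{\mathrm{uB}}$ as interpreted in $V(\R^{*})$ speaks about universally Baire sets of $V(\R^{*})$ and about arbitrary sets $Z$ there, and the identification of these with (canonical extensions of) $\Hom^{*}$ sets is precisely what has to be proved --- it is not a homogeneity fact. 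The missing ingredient is the actual use of the strong cardinals below $\lambda$: one needs the Martin--Solovay construction, extended via embeddings witnessing $<\lambda$-strength, to produce, for each $A\in\Hom^{*}$ and each set $Z$, $Z$-absolutely complementing trees that are moreover definable from $F_{\mathrm{uB}}$-data inside $\mathcal{M}$ (this is what the paper itself alludes to in its remark following the theorem, citing \cite[Lemma 2.4, Lemma 4.1, Lemma 4.2]{uB}). In your sketch the hypothesis that $\lambda$ is a limit of strong cardinals is used only to invoke the ``moreover'' clause of the derived model theorem, never for this tree construction, and the sentence ``the verification that the pair is absolutely complementing being itself an instance of $F_{\mathrm{uB}}$'' is not correct as stated: $F_{\mathrm{uB}}$ records forcing facts about membership in canonical extensions, not complementation, which must be verified in $V(\R^{*})$ for the trees one builds. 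So the proposal is a reasonable roadmap, consistent with how \cite{uB} proceeds, but the step carrying all the mathematical weight is left as an appeal to the very theorem being proved. (The final transfer of $\AD^{+}$ and the $\supseteq$ inclusion are fine modulo that step.)
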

Let us make a few remarks about Theorem \ref{ubdm}. First, note that the model $\mathcal{M}$ also satisfies $\AD_{\R}$. And \cite[Lemma 2.4, Lemma 6.1]{uB} shows that 
\[ V(\R^{*})\models \Hom^{*} \text{ is productive. }\]
We explain the second remark briefly. Given any set $Z$. \cite[Lemma 2.4]{uB} shows that there is an ordinal $\eta$ such that every $\eta$-absolutely complementing pair of trees on $\omega\times\On$ is $Z$-absolutely complementing. Such a pair of trees is obtained by extending the pair of trees coming from the Martin--Solovay construction with strong embeddings. This gives us tree representations compatible with projections. For more details, see \cite[Lemma 4.1, Lemma 4.2]{uB}. 

    The same argument in \cite[Lemma 3.4]{TG} gives us the regularity of $\Theta$. For the definition of being self-iterable, see \cite[Section 2]{TG}. The second author would like to thank Takehiko Gappo for his help in proving the following theorem. 
 \begin{thm}\label{gappo}
        Suppose that $V$ is self-iterable. Let $\lambda$ be an inaccessible cardinal which is a limit of Woodin cardinals and a limit of strong cardinals, and let $G\subset\Col(\omega, <\lambda)$ be a $V$-generic filter. Let 
        \[\mathcal{M}=(L^{F_{\mathrm{uB}}}(\R^{*}, \Hom^{*}))^{V(\R^{*})}. \]
        Then
        \begin{enumerate}
            \item $\mathcal{M}\models\AD^{+}+\AD_{\R}+ ``\Theta \text{ is regular''} + \text{ ``Every set of reals is universally Baire''}$, 
            \item $\mathcal{M}\cap \mathcal{P}(\R^{*}_{G})=\Hom^{*}_{G}$. 
        \end{enumerate}
    \end{thm}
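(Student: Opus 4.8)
The plan is to derive Theorem~\ref{gappo} from Theorem~\ref{ubdm}, the derived model theorem, and an adaptation of \cite[Lemma 3.4]{TG}. The two hypotheses here that are absent from Theorem~\ref{ubdm} --- self-iterability of $V$ and inaccessibility of $\lambda$ --- will be used only to establish the regularity of $\Theta$ in $\mathcal{M}$; everything else follows from material already recorded above.

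Since $\lambda$ is a limit of Woodin cardinals and a limit of strong cardinals, Theorem~\ref{ubdm} applies verbatim and yields clause~(2), namely $\mathcal{M}\cap\mathcal{P}(\R^{*}_{G})=\Hom^{*}_{G}$, as well as $\mathcal{M}\models\AD^{+}$ and $\mathcal{M}\models$ ``every set of reals is universally Baire''. For $\AD_{\R}$: a strong cardinal below $\lambda$ is in particular $<\lambda$-strong, so $\lambda$ is a limit of $<\lambda$-strong cardinals, and the derived model theorem gives $L(\R^{*}_{G},\Hom^{*}_{G})\models\AD_{\R}$ together with $\mathcal{P}(\R^{*}_{G})\cap L(\R^{*}_{G},\Hom^{*}_{G})=\Hom^{*}_{G}$. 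Since $\mathcal{M}\supseteq L(\R^{*}_{G},\Hom^{*}_{G})$ and, by the previous line and clause~(2), the two models have the same reals and the same sets of reals, the statement $\AD_{\R}$ --- being a statement about $\R$ and $\mathcal{P}(\R)$, as a winning strategy for a length-$\omega$ real game is coded by a set of reals and ``it is winning for payoff $A$'' is absolute between models agreeing on $\R$ and $\mathcal{P}(\R)$ --- transfers upward to $\mathcal{M}$. This is exactly the first remark following Theorem~\ref{ubdm}.

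It remains to show $\mathcal{M}\models$ ``$\Theta$ is regular'', and for this I would run the argument of \cite[Lemma 3.4]{TG}. Assume toward a contradiction that $\Theta^{\mathcal{M}}$ is singular in $\mathcal{M}$; fix $\gamma<\Theta^{\mathcal{M}}$ and a cofinal map $f\colon\gamma\to\Theta^{\mathcal{M}}$ with $f\in\mathcal{M}$. Because $\mathcal{M}=(L^{F_{\mathrm{uB}}}(\R^{*},\Hom^{*}))^{V(\R^{*})}$, the map $f$ is definable over $\mathcal{M}$ from a real $r$ and a set of reals $A\in\Hom^{*}_{G}$, and since $A$ is the canonical extension of a $<\lambda$-universally Baire set and $\lambda$ is inaccessible, a tree pair for that set together with $r$ already appears in $V[G\restriction\alpha]$ for some $\alpha<\lambda$. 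One then reflects the putative singularization into a countable elementary substructure, invokes the self-iteration strategy of $V$ to iterate that substructure so that its ordinal height and its $\R^{*}$/$\Hom^{*}$ data are correctly realized at genuine stages of the $\Col(\omega,<\lambda)$-extension, and reads off from this a cofinal map of order type $\gamma$ into $\Theta^{\mathcal{M}}$ that is already captured at a bounded stage $V[G\restriction\beta]$ with $\beta<\lambda$; this contradicts the Woodinness used below $\lambda$ to generate $\R^{*}$ and $\Hom^{*}$ (equivalently, the regularity of the relevant point of the Solovay sequence). I expect the genuinely delicate point to be exactly this last step --- transporting the self-iterability and condensation bookkeeping of \cite[Lemma 3.4]{TG} to the present situation, where the relevant model is $L^{F_{\mathrm{uB}}}(\R^{*},\Hom^{*})$ equipped with the predicate $F_{\mathrm{uB}}$ rather than a plain $L(\R^{*},\Hom^{*})$-style Chang model. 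The remaining assertions are immediate from Theorem~\ref{ubdm} and the derived model theorem.
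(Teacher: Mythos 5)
Your handling of clauses (1)--(2) apart from the regularity of $\Theta$ matches the paper: Theorem~\ref{ubdm} gives $\AD^{+}$, ``every set of reals is universally Baire'' and $\mathcal{M}\cap\mathcal{P}(\R^{*}_{G})=\Hom^{*}_{G}$, and $\AD_{\R}$ transfers from $L(\R^{*},\Hom^{*})$ since the two models have the same sets of reals. The problem is the part that actually carries the weight of the theorem, namely $\mathcal{M}\models``\Theta$ is regular'', and there your proposal has a genuine gap rather than a proof. First, the mechanism you describe --- reflect a putative singularizing map into a countable elementary substructure and then ``invoke the self-iteration strategy of $V$ to iterate that substructure'' --- is not what self-iterability provides: it gives an iteration strategy (in generic extensions) for trees on $V$ itself, and the argument of \cite[Lemma 3.4]{TG} uses it exactly that way, performing a genericity iteration of $V$ based on the windows between consecutive Woodin cardinals followed by an $\R^{*}_{G}$-genericity iteration, to produce an iterate $\bar{W}$ with $\pi^{\mathcal{S}}(\lambda)=\lambda$ and a $(\bar{W},\Col(\omega,<\lambda))$-generic $H\in V[G]$ with $\R^{*}_{H}=\R^{*}_{G}$. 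Applying a strategy for $V$ to countable hulls is a nontrivial condensation issue you would have to address separately, and your closing contradiction (``contradicts the Woodinness\dots equivalently the regularity of the relevant point of the Solovay sequence'') is essentially a restatement of what is to be proved rather than an argument.

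Second, and more importantly, the one genuinely new step needed here --- making the \cite[Lemma 3.4]{TG} argument apply to $L^{F_{\mathrm{uB}}}(\R^{*},\Hom^{*})$ instead of $L(\R^{*},\Hom^{*})$ --- is precisely the point you explicitly defer (``I expect the genuinely delicate point to be exactly this last step''). The paper resolves it by proving the Claim that
\[(L^{F_{\mathrm{uB}}}(\R^{*},\Hom^{*}))^{V(\R^{*})}=(L^{F_{\mathrm{uB}}}(\R^{*},\Hom^{*}))^{\bar{W}(\R^{*})}:\]
for $A\in(\Hom^{*})^{V(\R^{*})}=(\Hom^{*})^{\bar{W}(\R^{*})}$, $Z\in\bar{W}(\R^{*})$ and $K\subset\Col(\omega,Z)$ generic, absoluteness of well-foundedness gives $(A^{K})^{V(\R^{*})[K]}\cap\bar{W}(\R^{*})[K]=(A^{K})^{\bar{W}(\R^{*})[K]}$, hence the forcing statements $p\Vdash\dot{x}\in A^{\dot{K}}$ agree between $V(\R^{*})$ and $\bar{W}(\R^{*})$ for parameters in $\bar{W}(\R^{*})$, so $X\cap(F_{\mathrm{uB}})^{V(\R^{*})}=X\cap(F_{\mathrm{uB}})^{\bar{W}(\R^{*})}$ for all $X\in\bar{W}(\R^{*})$, and then an induction on the $J^{F_{\mathrm{uB}}}$-hierarchy yields the equality; with this in hand the rest of \cite[Section 3]{TG} goes through verbatim for $\mathcal{M}$. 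Without that claim (or some substitute showing the $F_{\mathrm{uB}}$-construction is correctly computed in the iterate), your outline does not establish the regularity of $\Theta$, so as it stands the proposal is incomplete at its central point.
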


    \begin{proof}
    By Theorem \ref{ubdm}, we have
    \[\mathcal{M}\models\AD^{+}+\AD_{\R}+\text{ ``Every set of reals is universally Baire''}, \]
    and 
    \[\mathcal{M}\cap \mathcal{P}(\R^{*}_{G})=\Hom^{*}_{G}. \]
    We need to check the regularity of $\Theta$. This proof is based on \cite[Section 3]{TG}. The desired result can be obtained by replacing the usual derived model in the arguments in  \cite[Section 3]{TG} with the generalized derived model $L^{F_{\mathrm{uB}}}(\R^{*}_{G}, \Hom^{*}_{G})$. We give an outline for the arguments. 

    Let $\Lg \delta_{i} \mid i<\la\Rg$ be an increasing enumeration of Woodin cardinals below $\la$. 
    Let $x\in\R^{*}_{G}$, and $\delta<\lambda$ be a Woodin cardinal. 
    Say $\delta=\delta_{i+1}$. 
    Let $\mathcal{T}$ be a normal iteration tree on $V$ with last model $W$ based on the window $(\delta_{i}, \delta_{i+1})$ making $x$ generic for the extender algebra at $\delta_{i+1}$. 
    
    The main argument in \cite[Lemma 3.4]{TG} is to recover the derived model of $V$ by $\R^{*}_{G}$-genericity iteration. 
    Let $\Lg a_{j} \mid j<\la\Rg$ be an enumeration of $\R^{*}_{G}$ in $V[G]$. We inductively construct $\Lg W_{j}, \mathcal{U}_{j} \mid j<\la\Rg$ as follows: 
    \begin{enumerate}
        \item $W_{0}=W$, 
        \item for $j<\la$, $W_{j+1}$ is the last model of $\mathcal{U}_{j}$, 
        \item if $j<\la$ is limit, then $W_{j}$ is the well-founded direct limit along the unique cofinal branch through $\bigoplus_{k<j}\mathcal{U}_{k}$, 
        \item for $j<\la$, $\mathcal{U}_{j}$ is an iteration tree on $W_{j}$ according to the tail strategy of $V$ based on the window $(\delta_{i+1+j}, \delta_{i+1+j+1})$ making $a_{j}$ generic over $\mathbb{B}^{W_{j}}_{\pi((\delta_{i+1+j}, \delta_{i+1+j+1}))}$. 
    \end{enumerate}
    Then let $\mathcal{U}=\bigoplus_{j<\la}\mathcal{U}_{j}$, and let $\bar{W}$ be the well-founded direct limit along the unique cofinal branch through $\mathcal{S}=\mathcal{T}^{\smallfrown}\mathcal{U}$. 
    Since $\lambda$ is inaccessible, $\pi^{\mathcal{S}}(\la)=\la$. 
    
    By construction, we pick a $(\bar{W}, \Col(\omega, <\la))$-generic $H\in V[G]$ such that $\R^{*}_{G}=\R^{*}_{H}$. Let $\R^{*}=\R^{*}_{G}=\R^{*}_{H}$. Note that $\bar{W}[H]\subset V[G]$. 
    \cite[Lemma 3.4]{TG} shows 
    \[(\Hom^{*})^{V(\R^{*})}=(\Hom^{*})^{\bar{W}(\R^{*})}. \]
    Hence we have
    \[ L(\R^{*}, \Hom^{*})^{V(\R^{*})}=L(\R^{*}, \Hom^{*})^{\bar{W}(\R^{*})}. \]
    For the same argument in \cite[Lemma 3.4]{TG} (and the rest of proofs in \cite[Section 3]{TG}) to hold, it is enough to show: 

    {\bf Claim.}
    \[(L^{F_{\mathrm{uB}}}(\R^{*}, \Hom^{*}))^{V(\R^{*})}=(L^{F_{\mathrm{uB}}}(\R^{*}, \Hom^{*}))^{\bar{W}(\R^{*})}. \]

    {\bf Proof.} 
    Let $Z\in \bar{W}(\R^{*})$ be a set and let $A\in (\Hom^{*})^{V(\R^{*})}=(\Hom^{*})^{\bar{W}(\R^{*})}$. 
    Let $K\subset\Col(\omega, Z)$ be $V(\R^{*})$-generic. Then we have that 
    \[ (A^{K})^{V(\R^{*})[K]}\cap \bar{W}(\R^{*})[K]=(A^{K})^{\bar{W}(\R^{*})[K]}. \]
    Hence we have that 
    \[ p\dststile{\Col(\omega, Z)}{V(\R^{*})} \dot{x}\in A^{\dot{K}} \iff p\dststile{\Col(\omega, Z)}{\bar{W}(\R^{*})} \dot{x}\in A^{\dot{K}} \]
    where $p\in\Col(\omega, Z)$ and $\dot{x}\in \bar{W}(\R^{*})$ is a $\Col(\omega, Z)$-name for a real. 
    
    Therefore, we have that for all $X\in \bar{W}(\R^{*})$, 
    \[ X\cap (F_{\mathrm{uB}})^{V(\R^{*})}=X\cap (F_{\mathrm{uB}})^{\bar{W}(\R^{*})}. \]
    where $\dot{K}$ is the canonical $\Col(\omega, Z)$-name for a generic. 
    
    It follows that 
    \[(J_{\al}^{F_{\mathrm{ub}}}(\R^{*}, \Hom^{*}))^{V(\R^{*})}=(J_{\al}^{F_{\mathrm{ub}}}(\R^{*}, \Hom^{*}))^{\bar{W}(\R^{*})}\]
    for all $\al$, by induction on the level of $J$-hierarchy. {\bf (Q.E.D. Claim.)}

    For the rest of arguments, see \cite[Section 3]{TG}. 
    This finishes the proof. 
    \end{proof}
    Now we prove the productivity of $\Hom^{*}$ in the smaller model $\mathcal{M}$. Since $F_{\mathrm{uB}}$ only tells how to extend universally Baire sets of reals in generic extensions, it is not trivial that we can find trees compatible with projections in $\mathcal{M}$.  
\begin{lem}\label{prodhom}
Let $\la$, $G$, $\mathcal{M}$ be as in Theorem \ref{gappo}.  
Then 
    \[\mathcal{M}\models ``\Hom^{*} \text{ is productive''. }\] 
\end{lem}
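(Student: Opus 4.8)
The plan is to verify the two clauses of Definition \ref{productivity} inside $\mathcal{M}$ separately; the first is bookkeeping and the second carries all the content. Clause (1) is immediate: by Theorem \ref{gappo}, $\mathcal{M}\models$ ``every set of reals is universally Baire'' and $\mathcal{M}\cap\mathcal{P}(\R)=\Hom^{*}$, so the pointclass $\Gamma^{\infty}$ as computed in $\mathcal{M}$ is exactly $\Hom^{*}=\mathcal{M}\cap\bigcup_{k}\mathcal{P}(\R^{k})$, and since $\mathcal{M}\models\ZF$ this is trivially closed under complements and under $\exists^{\R}$.

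For clause (2) I would fix $k<\omega$, $Z\in\mathcal{M}$, a set $D\in\Hom^{*}\cap\mathcal{P}(\R^{k+2})$, and trees $T,U\in\mathcal{M}$ witnessing inside $\mathcal{M}$ that $D$ is $Z$-universally Baire, and let $\tilde T$ be the projection tree of the definition. The first observation is the tree-independent identity $p[\tilde T]=\exists^{\R}p[T]$, valid in every transitive model containing $\tilde T$; hence for every $\mathcal{M}$-generic $K\subset\Col(\omega,Z)$ one gets $p[\tilde T]^{\mathcal{M}[K]}=\exists^{\R}(D^{K})$, where $D^{K}:=p[T]^{\mathcal{M}[K]}$ is the canonical extension of $D$ (which does not depend on the chosen trees). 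By clause (1), $\exists^{\R}D$ and its complement lie in $\Hom^{*}$, so $\exists^{\R}D$ is $Z$-universally Baire in $\mathcal{M}$; fix a $Z$-absolutely complementing pair $(S,\tilde U)\in\mathcal{M}$ with $p[S]=\exists^{\R}D$ in $\mathcal{M}$. I claim $\tilde U$ witnesses clause (2), i.e. $(\tilde T,\tilde U)$ is $Z$-absolutely complementing over $\mathcal{M}$; since $(S,\tilde U)$ already is, it suffices to show $\exists^{\R}(D^{K})=(\exists^{\R}D)^{K}$ in $\mathcal{M}[K]$ for every $\mathcal{M}$-generic $K\subset\Col(\omega,Z)$.

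The inclusion $\exists^{\R}(D^{K})\subseteq(\exists^{\R}D)^{K}$ I would obtain by the standard absoluteness-of-wellfoundedness argument: the tree $R\in\mathcal{M}$ that searches for a point lying simultaneously in $\exists^{\R}p[T]$ and in $p[\tilde U]$ has empty projection in $\mathcal{M}$ (where it computes $\exists^{\R}D\cap(\R^{k+1}\setminus\exists^{\R}D)=\emptyset$), hence is wellfounded, hence remains wellfounded in $\mathcal{M}[K]$, giving $\exists^{\R}(D^{K})\cap(\R^{k+1}\setminus(\exists^{\R}D)^{K})=\emptyset$. The reverse inclusion $(\exists^{\R}D)^{K}\subseteq\exists^{\R}(D^{K})$ is the real point and cannot be obtained by a tree search, since ``$x\notin\exists^{\R}p[T]$'' is a $\forall^{\R}$ statement. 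Here the plan is to import the productivity of $\Hom^{*}$ in $V(\R^{*})$ from \cite[Lemma 2.4, Lemma 6.1]{uB}: there $\Col(\omega,Z)$ forces $(\exists^{\R}D)^{*}=\exists^{\R}(D^{*})$, and this is recorded by the predicate $F_{\mathrm{uB}}$ in the precise sense that $(\exists^{\R}D,Z,p,\dot x)\in F_{\mathrm{uB}}$ iff $p$ forces over $V(\R^{*})$ that $\exists y\,(\dot x,y)\in D^{\dot G}$. One then argues, exactly as in the Claim in the proof of Theorem \ref{gappo} (agreement of the relevant fragments of the $V(\R^{*})$-forcing relation, and of standard names for reals, between $V(\R^{*})$ and the inner model), that the canonical-extension operation of $\mathcal{M}$ is read off from $F_{\mathrm{uB}}$ in a way that commutes with $\exists^{\R}$, which yields the missing inclusion.

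The main obstacle is precisely this transfer: an $\mathcal{M}$-generic $K\subset\Col(\omega,Z)$ need not be $V(\R^{*})$-generic, so productivity of $\Hom^{*}$ in $V(\R^{*})$ cannot be quoted directly along $K$. What has to be pushed through is that, by the construction of $\mathcal{M}$ via $F_{\mathrm{uB}}$ and standard names (following \cite[Main Theorem]{uB} together with the derived-model bookkeeping), ``$x\in(\exists^{\R}D)^{K}$'' in $\mathcal{M}[K]$ is equivalent to the existence of a \emph{standard} $\Col(\omega,Z)$-name $\dot y\in\mathcal{M}$ and a condition in $K$ forcing $(\dot x,\dot y)\in D^{\dot G}$ over $V(\R^{*})$ — so that the witnessing real $y$ actually lands in $\mathcal{M}[K]$. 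The absoluteness of the notion ``standard name for a real'' and of the associated fragment of the $V(\R^{*})$-forcing relation, already used in the proof of Theorem \ref{gappo}, is what makes this go through, and this is the step I expect to require the most care.
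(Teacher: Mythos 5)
Your set-up is fine: clause (1) of productivity is indeed immediate from Theorem \ref{gappo}, the identity $p[\tilde T]=\exists^{\R}p[T]$ correctly reduces clause (2) to showing that the canonical extension operation of $\mathcal{M}$ commutes with $\exists^{\R}$, and your tree argument for the inclusion $\exists^{\R}(D^{K})\subseteq(\exists^{\R}D)^{K}$ is sound. Also, the ``obstacle'' you flag about $\mathcal{M}$-generics is handled by a standard move you could have used affirmatively: since the forcing relation of $\mathcal{M}$ is definable in $\mathcal{M}$, it suffices to verify the identity in $\mathcal{M}[H]$ for filters $H$ that are $\Col(\omega,Z)$-generic over $V(\R^{*})$ (below any given condition, in an outer universe), and for such $H$ one has $B^{\mathcal{M}[H]}=B^{V(\R^{*})[H]}\cap\mathcal{M}[H]$ for every $B\in\Hom^{*}$ by absoluteness of wellfoundedness. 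This is exactly the frame the paper works in.

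The genuine gap is the reverse inclusion $(\exists^{\R}D)^{K}\subseteq\exists^{\R}(D^{K})$, i.e.\ producing a witness $y$ \emph{inside} $\mathcal{M}[H]$; your proposed mechanism does not prove it. The claimed equivalence ``$x\in(\exists^{\R}D)^{K}$ iff some standard name $\dot y\in\mathcal{M}$ and some $p\in K$ force $(\dot x,\dot y)\in D^{\dot G}$ over $V(\R^{*})$'' is precisely the statement to be proved, not something that follows from $F_{\mathrm{uB}}$-absoluteness: $F_{\mathrm{uB}}$ (and the Claim in Theorem \ref{gappo}, which compares $V(\R^{*})$ with $\bar{W}(\R^{*})$, two models with the same reals and the same $\Hom^{*}$) only records which names are forced into canonical extensions; it gives no reason why, for $x\in\R\cap\mathcal{M}[H]$ with $x\in(\exists^{\R}D)^{V(\R^{*})[H]}$, any witnessing real — which a priori lives only in $V(\R^{*})[H]$, a model with strictly more reals than $\mathcal{M}[H]$ in general — should be captured by a name in $\mathcal{M}$. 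The paper supplies the missing idea by uniformization: working in $\mathcal{M}$, it totalizes $A=D$, uses $\AD_{\R}$ to get a uniformizing function $F$, and replaces $F$ by the set of reals $\mathrm{Code}(F)=\{(x,m,n)\mid F(x)(m)=n\}\in\Hom^{*}$ (Sargsyan's coding trick). Productivity of $\Hom^{*}$ in $V(\R^{*})$ guarantees that $\mathrm{Code}(F)^{V(\R^{*})[H]}$ codes a total uniformizing function, and the point of the trick is that totality of the coded function in $\mathcal{M}[H]$ only requires \emph{integer} witnesses $n$, which automatically lie in $\mathcal{M}[H]$ since $\mathrm{Code}(F)^{\mathcal{M}[H]}=\mathrm{Code}(F)^{V(\R^{*})[H]}\cap\mathcal{M}[H]$ (the paper in fact proves the forced totality over $\mathcal{M}$ outright, via $\DC$ in $\mathcal{M}$ and a countable-hull/tree-absoluteness argument). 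The witness $y$ is then read off as $F^{\mathcal{M}[H]}(x)$, definably from $x$ and $\mathrm{Code}(F)^{\mathcal{M}[H]}$, hence lands in $\mathcal{M}[H]$. Without this (or some comparable device), your proof of the hard inclusion does not go through.
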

\begin{proof}
   Let $A\in\Hom^{*}$. 
   We may assume that $A\subset\R^{2}$. 
   Let $Z$ be any set in $\mathcal{M}$. We need to check that 
   \[ \dststile{\Col(\omega. Z)}{\mathcal{M}} \exists^{\R}(A^{\dot{H}})=(\exists^{\R}A)^{\dot{H}}. \]
   Let $H\subset\Col(\omega, Z)$ be $V(\R^{*})$-generic. 
   Note that for any $B\in\Hom^{*}$, it follows from the absoluteness of well-foundedness that 
   \[ B^{\mathcal{M}[H]}=B^{V(\R^{*})[H]}\cap \mathcal{M}[H]. \]
   First, we claim that 
   \[\exists^{\R}(A^{\mathcal{M}[H]})\subset (\exists^{\R}A)^{\mathcal{M}[H]}. \]
   This is because 
   \begin{align*}
       \exists^{\R}(A^{\mathcal{M}[H]})&=\exists^{\R}(A^{V(\R^{*})[H]}\cap \mathcal{M}[H])\\
       &=\{x\in \R\cap\mathcal{M}[H] \mid \exists y\in \R\cap \mathcal{M}[H]\, (x, y)\in A^{V(\R^{*})[H]} \} \\
       &\subset \{x\in \R\cap\mathcal{M}[H] \mid \exists y\in \R\, (x, y)\in A^{V(\R^{*})[H]} \} \\
       &=\exists^{\R}(A^{V(\R^{*})[H]})\cap \mathcal{M}[H]\\
       &=(\exists^{\R}A)^{V(\R^{*}[H]}\cap \mathcal{M}[H]\\
       &=(\exists^{\R}A)^{\mathcal{M}[H]}. 
   \end{align*}
We shall show the reverse inclusion 
\begin{align*}
    &\{x\in \R\cap\mathcal{M}[H] \mid \exists y\in \R\, (x, y)\in A^{V(\R^{*})[H]} \}\\
    &\subset\{x\in \R\cap\mathcal{M}[H] \mid \exists y\in \R\cap \mathcal{M}[H]\, (x, y)\in A^{V(\R^{*})[H]} \}. 
\end{align*}  
To prove that, we make use of the uniformization following from $\AD_{\R}$. 
Let $\tilde{A}$ be a totalization of $A$, namely 
\[ \tilde{A}=\{(x, \Lg 0\Rg{}^{\frown} y) \mid (x, y)\in A\}\cup \{(x, \Lg 1,1, 1, \dots\Rg \mid x\notin \exists^{\R} A\}. \]
Let $F\colon \R\to \R$ uniformize $\tilde{A}$. Then by the productivity of $\Hom^{*}$ in $V(\R^{*})$, we have that $F^{V(\R^{*})[H]}$ uniformizes $\tilde{A}^{V(\R^{*})[H]}$. 
Moreover, we have that $F^{\mathcal{M}[H]}=F^{V(\R^{*})[H]}\cap \mathcal{M}[H]$. 
To prove the reverse inclusion, it is enough to show that $F^{\mathcal{M}[H]}$ is a total function with its domain $\R\cap \mathcal{M}[H]$. We shall make use of the Sargsyan's coding trick \cite[Lemma 1.21]{DIMT}. 
Define 
\[ \mathrm{Code}(F):=\{ (x, m, n)\in \R\times \omega^{2} \mid F(x)(m)=n \}. \]
Then by the productivity of $\Hom^{*}$ in $V(\R^{*})$, $\mathrm{Code}(F)^{V(\R^{*})[H]}$ codes the function $F^{V(\R^{*})[H]}$. 

Our next claim is the following. 
\[\dststile{\Col(\omega, Z)}{\mathcal{M}}\forall x\in \R \forall m\in\omega \exists! n\in\omega (x, m, n)\in \mathrm{Code}(F)^{\dot{H}}. \]
{\bf Proof.} First, we prove the existence of such an $n\in\omega$. 
Suppose otherwise. 
Let $(S, T)\in\mathcal{M}$ be a $Z$-absolutely complementing pair of trees projecting to $\mathrm{Code}(F)$ and its complement respectively. 
Let $p\in\Col(\omega, Z)$, let $\rho\in\mathcal{M}^{\Col(\omega, Z)}$, and let $m\in \omega$ be such that 
\[ p\dststile{\Col(\omega, Z)}{\mathcal{M}} \dot{\rho}\in \R \land \forall n\in\omega \, (\dot{\rho}, m, n)\notin \mathrm{Code}(F)^{\dot{H}}. \]
Note that $\mathcal{M}$ satisfies $\DC_{\mathcal{P}(\R)}$, since it satisfies $\AD_{\R}+``\Theta$ is regular''. (See \cite[Theorem 1.3]{Sol}. ) Therefore, it satisfies $\DC$. Let $\eta$ be sufficiently large, and by $\DC$, let $\pi\colon P\to V_{\eta}^{\mathcal{M}}$ be elementary such that  
\begin{itemize}
    \item $P$ is countable transitive, 
    \item $\{ F, S, T, Z, p, \rho\}\subset \ran(\pi)$. 
\end{itemize}
Let $\pi(\Lg \bar{F}, \bar{S}, \bar{T}, \bar{Z}, \bar{p}, \bar{\rho}\Rg)=\Lg F, S, T, Z, p, \rho\Rg$. Let $g\subset\Col(\omega, \bar{Z})$ be $P$-generic with $\bar{p}\in g$. Let $x=\bar{\rho}^g$. 
Then we have for any $n\in\omega$, $(x, m, n)\notin p[\bar{S}]$. Hence for any $n\in\omega$, $(x, m, n)\in p[\bar{T}]$. However, since $p[\bar{T}]\subset p[T]$, we have for any $n\in\omega$, $(x, m, n)\in p[T]$. This is impossible, since $p[S]=\mathrm{Code}(F)$. This is a contradiction! 
The similar argument shows the uniqueness. This finishes the proof of the claim. {\bf (Q.E.D. Claim.)}

Now given any $x\in\R\cap \mathcal{M}[H]$. Define $y\in\R\cap \mathcal{M}[H]$ inside of $\mathcal{M}[H]$
\[ y(m)=n \iff (x, m, n)\in \mathrm{Code}(F)^{\mathcal{M}[H]}. \]
By the claim above, $y$ is well-defined. Since $\mathrm{Code}(F)^{V(\R^{*})[H]}$ really codes $F^{V(\R^{*})[H]}$ and $F^{\mathcal{M}[H]}=F^{V(\R^{*})[H]}\cap \mathcal{M}[H]$, we have $y=F^{\mathcal{M}[H]}(x)$. This proves that $F^{\mathcal{M}[H]}$ is a total function with its domain $\R\cap \mathcal{M}[H]$. 

Therefore, this gives the reverse inclusion
\begin{align*}
    &\{x\in \R\cap\mathcal{M}[H] \mid \exists y\in \R\, (x, y)\in A^{V(\R^{*})[H]} \}\\
    &\subset\{x\in \R\cap\mathcal{M}[H] \mid \exists y\in \R\cap \mathcal{M}[H]\, (x, y)\in A^{V(\R^{*})[H]} \}. 
\end{align*} 
Hence $\exists^{\R}(A^{\mathcal{M}[H]})=(\exists^{\R}A)^{\mathcal{M}[H]}$. This finishes the proof. 
\end{proof}
We showed that 
\begin{thm}
        Suppose that $V$ is self-iterable. Let $\lambda$ be an inaccessible cardinal which is a limit of Woodin cardinals and a limit of strong cardinals, and let $G\subset\Col(\omega, <\lambda)$ be a $V$-generic filter. 
        Let 
        \[\mathcal{M}=(L^{F_{\mathrm{uB}}}(\R^{*}, \Hom^{*}))^{V(\R^{*})}. \]
        Then
        \begin{enumerate}
            \item $\mathcal{M}\models\AD^{+}+\AD_{\R}+ ``\Theta \text{ is regular''} + \text{ ``Every set of reals is universally Baire''}$, 
            \item $\mathcal{M}\cap \mathcal{P}(\R^{*}_{G})=\Hom^{*}_{G}$, 
            \item $\mathcal{M}\models``\Gamma^{\infty}(=\Hom^{*}_{G})\text{ is productive''}$.  
        \end{enumerate}
    \end{thm}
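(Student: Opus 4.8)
The plan is to assemble the statement directly from the two results just established, together with a short identification of pointclasses inside $\mathcal{M}$. Clauses (1) and (2) are literally the conclusion of Theorem \ref{gappo}, applied with the very same $\lambda$, $G$, and $\mathcal{M}=(L^{F_{\mathrm{uB}}}(\R^{*}, \Hom^{*}))^{V(\R^{*})}$; so there is nothing to do for those beyond invoking it.

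For clause (3), first I would record that inside $\mathcal{M}$ the pointclass $\Gamma^{\infty}$ coincides with $\Hom^{*}_{G}$. Indeed, by clause (1) the model $\mathcal{M}$ satisfies ``every set of reals is universally Baire'', so $(\Gamma^{\infty})^{\mathcal{M}}=\mathcal{P}(\R^{*})^{\mathcal{M}}$; and by clause (2) we have $\mathcal{P}(\R^{*})^{\mathcal{M}}=\Hom^{*}_{G}$. Hence the assertion ``$\Gamma^{\infty}$ is productive'' evaluated in $\mathcal{M}$ is exactly the assertion ``$\Hom^{*}$ is productive'' evaluated in $\mathcal{M}$.

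The latter is precisely the content of Lemma \ref{prodhom}, again applied with the present $\lambda$, $G$, $\mathcal{M}$ (whose hypotheses are those of Theorem \ref{gappo}, which we are assuming). Combining, all three clauses hold of $\mathcal{M}$, which finishes the proof.

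I do not expect any genuine obstacle: the real work was carried out in Theorem \ref{gappo} (regularity of $\Theta$ via the $\R^{*}_{G}$-genericity iteration argument of \cite[Section 3]{TG}, adapted to the generalized derived model $L^{F_{\mathrm{uB}}}$) and in Lemma \ref{prodhom} (transferring productivity from $V(\R^{*})$ down to $\mathcal{M}$ using the Sargsyan coding trick). This final theorem is merely their conjunction, once the routine identification $(\Gamma^{\infty})^{\mathcal{M}}=\Hom^{*}_{G}$ is noted.
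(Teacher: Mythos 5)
Your proposal matches the paper's own treatment: the theorem is simply recorded as the conjunction of Theorem \ref{gappo} (clauses (1) and (2)) and Lemma \ref{prodhom} (clause (3)), with the identification $(\Gamma^{\infty})^{\mathcal{M}}=\mathcal{P}(\R^{*})^{\mathcal{M}}=\Hom^{*}_{G}$ exactly as you note. Correct, and essentially the same argument.
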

    \nocite{*}
\bibliographystyle{plain}
\bibliography{set_theory}

\begin{thebibliography}{10}

\bibitem{DR3}
David Asper{\'o} and Ralf Schindler.
\newblock {Bounded Martin’s Maximum with an Asterisk}.
\newblock {\em Notre Dame Journal of Formal Logic}, 55(3):333 -- 348, 2014.

\bibitem{DR}
David Asper\'o and Ralf Schindler.
\newblock Martin's maximum{${}^{++}$} implies {Woodin}'s axiom ${(\ast)}$.
\newblock {\em Annals of Mathematics}, 193:793--835, 2021.

\bibitem{DR2}
David Asper\'o and Ralf Schindler.
\newblock Martin's {Maximum} is ${\Sigma_{2}}$ complete.
\newblock {\url{https://ivv5hpp.uni-muenster.de/u/rds/MM-complete.pdf}}, 2022.

\bibitem{BB}
Joan Bagaria.
\newblock Bounded forcing axioms as principles of generic absoluteness.
\newblock {\em Archive for Mathematical Logic}, 39(6):393--401, 2000.

\bibitem{BR}
Benjamin Claverie and Ralf Schindler.
\newblock Increasing ${u_{2}}$ by a stationary set preserving forcing.
\newblock {\em The Journal of Symbolic Logic}, 74(1):187--200, 2009.

\bibitem{BR2}
Benjamin Claverie and Ralf Schindler.
\newblock Woodin's axiom ${(\ast)}$, bounded forcing axioms, and precipitous ideals on ${\omega_{1}}$.
\newblock {\em Journal of Symbolic Logic - JSYML}, 77, 06 2012.

\bibitem{LogicColl}
S.~Barry Cooper, Herman Geuvers, Anand Pillay, and JoukoEditors Väänänen.
\newblock {\em Logic Colloquium 2006}, volume~1 of {\em Lecture Notes in Logic}.
\newblock Cambridge University Press, 2009.

\bibitem{DoR}
Philipp Doebler and Ralf Schindler.
\newblock ${\Pi_{2}}$ consequences of ${\BMM+\NS_{\omega_{1}}}$ is precipitous and the semiproperness of stationary set preserving forcings.
\newblock {\em Mathematical Research Letters}, 16, 01 2009.

\bibitem{FMW}
Qi~Feng, Menachem Magidor, and Hugh Woodin.
\newblock Universally baire sets of reals.
\newblock In Haim Judah, Winfried Just, and Hugh Woodin, editors, {\em Set Theory of the Continuum}, pages 203--242, New York, NY, 1992. Springer US.

\bibitem{FMS}
M.~Foreman, M.~Magidor, and S.~Shelah.
\newblock Martin's {Maximum}, {Saturated Ideals}, and {Non-Regular Ultrafilters. Part I}.
\newblock {\em Annals of Mathematics}, 127(1):1--47, 1988.

\bibitem{HST}
Matthew Foreman and Akihiro Kanamori, editors.
\newblock {\em Handbook of Set Theory}, volume 1, 2, 3.
\newblock Springer Netherlands, Dordrecht, 2010.

\bibitem{TG}
Takehiko Gappo and Grigor Sargsyan.
\newblock On the derived models of self-iterable universes.
\newblock {\em Proceedings of the American Mathematical Society}, 150(3):1321--1329, 2022.

\bibitem{HaKe}
Leo~A. Harrington and Alexander~S. Kechris.
\newblock Ordinal games and their applications.
\newblock In L.~Jonathan Cohen, Jerzy Łoś, Helmut Pfeiffer, and Klaus-Peter Podewski, editors, {\em Logic, Methodology and Philosophy of Science VI}, volume 104 of {\em Studies in Logic and the Foundations of Mathematics}, pages 273--277. Elsevier, 1982.

\bibitem{Ketch}
Richard Ketchersid.
\newblock More structural consequences of ${\AD}$.
\newblock {\em Contemp. Math.}, 533, 01 2011.

\bibitem{KWAD}
Peter Koellner and Hugh Woodin.
\newblock {\em Large Cardinals from Determinacy}, pages 1951--2119.
\newblock Volume 1, 2, 3 of Foreman and Kanamori \cite{HST}, 2010.

\bibitem{Lar}
Paul~B. Larson.
\newblock {\em Forcing over Models of Determinacy}, pages 2121--2177.
\newblock Volume 1, 2, 3 of Foreman and Kanamori \cite{HST}, 2010.

\bibitem{LS}
Paul~B. Larson and Grigor Sargsyan.
\newblock Failures of square in ${\IP_{\max}}$ extensions of {Chang} models, 2021.

\bibitem{uB}
Paul~B. Larson, Grigor Sargsyan, and Trevor~M. Wilson.
\newblock {A MODEL OF THE AXIOM OF DETERMINACY IN WHICH EVERY SET OF REALS IS UNIVERSALLY BAIRE (DRAFT)}.

\bibitem{MSol}
Donald~A. Martin and Robert~M. Solovay.
\newblock A basis theorem for ${\Sigma^{1}_{3}}$ sets of reals.
\newblock {\em Annals of Mathematics}, 89(1):138--159, 1969.

\bibitem{extent}
Donald~A. Martin and John~R. Steel.
\newblock The extent of scales in ${L(\mathbb{R})}$.
\newblock In Alexander~S. Kechris, Donald~A. Martin, and Yiannis~N. Moschovakis, editors, {\em Cabal Seminar 79--81}, pages 86--96, Berlin, Heidelberg, 1983. Springer Berlin Heidelberg.

\bibitem{MS}
Donald~A. Martin and John~R. Steel.
\newblock A proof of projective determinacy.
\newblock {\em Journal of the American Mathematical Society}, 2(1):71--125, 1989.

\bibitem{FSIT}
William~J. Mitchell and John~R. Steel.
\newblock {\em Fine Structure and Iteration Trees}.
\newblock Lecture Notes in Logic. Cambridge University Press, 2017.

\bibitem{Mos}
Yiannis~N. Moschovakis.
\newblock {\em Descriptive Set Theory}, volume 100.
\newblock Studies in Logic and the Foundation of Mathematics, North-Holland, Amsterdam, 1980.

\bibitem{DIMT}
Grigor Sargsyan.
\newblock Descriptive inner model theory, 2012.

\bibitem{Gri}
Grigor Sargsyan.
\newblock Hod mice and the mouse set conjecture.
\newblock {\em Memoirs of the American Mathematical Society}, 236, 2015.

\bibitem{ADR}
Grigor Sargsyan.
\newblock {${\AD_{\mathbb{R}}}$} implies that all sets of reals are {$\Theta$} universally baire, 2021.

\bibitem{Sch}
Ralf Schindler.
\newblock {\em Woodin’s axiom ${(\ast)}$, or Martin’s Maximum, or both?}, pages 177--204.
\newblock Contemporary Mathematics, 01 2017.

\bibitem{CMI}
Ralf Schindler and John~R. Steel.
\newblock The core model induction.

\bibitem{Sol}
Robert~M. Solovay.
\newblock The independence of ${\DC}$ from ${\AD}$.
\newblock {\em Large Cardinals, Determinacy and Other Topics}, 2020.

\bibitem{Normal}
John~R. Steel.
\newblock Normalizing iteration trees and comparing iteration strategies.

\bibitem{Scale}
John~R. Steel.
\newblock Scales in ${L(\mathbb{R})}$.
\newblock In Alexander~S. Kechris, Donald~A. Martin, and Yiannis~N. Moschovakis, editors, {\em Cabal Seminar 79--81}, pages 107--156, Berlin, Heidelberg, 1983. Springer Berlin Heidelberg.

\bibitem{DM1}
John~R. Steel.
\newblock {\em The derived model theorem}, page 280–327.
\newblock Volume~1 of {\em Lecture Notes in Logic\/} \cite{LogicColl}, 2009.

\bibitem{outline}
John~R. Steel.
\newblock {\em An Outline of Inner Model Theory}, pages 1595--1684.
\newblock Volume 1, 2, 3 of Foreman and Kanamori \cite{HST}, 2010.

\bibitem{AD+}
John~R. Steel and Nam Trang.
\newblock ${\AD^{+}}$, derived models, and ${\Sigma_{1}}$-reflection.

\bibitem{SW}
John~R. Steel and Robert A.~Van Wesep.
\newblock Two consequences of determinacy consistent with choice.
\newblock {\em Transactions of the American Mathematical Society}, 272:67--85, 1982.

\bibitem{Wo2}
W.~Hugh Woodin.
\newblock Supercompact cardinals, sets of reals, and weakly homogeneous trees.
\newblock {\em Proceedings of the National Academy of Sciences of the United States of America}, 85(18):6587--6591, 1988.

\bibitem{Wo}
W.~Hugh Woodin.
\newblock {\em The Axiom of Determinacy, Forcing Axioms, and the Nonstationary Ideal}.
\newblock De Gruyter, Berlin, New York, 2010.

\bibitem{BMMs}
Stuart Zoble.
\newblock Bounded martin's maximum with many witnesses, 2008.

\end{thebibliography}
\end{document}